\documentclass[english, 11pt]{amsart}


\usepackage[english]{babel}
\usepackage[usenames,dvipsnames,svgnames,table]{xcolor}
\usepackage{amsfonts}
\usepackage{enumerate}
\usepackage{amsthm,amsmath,amssymb}
\usepackage{lineno}
\usepackage{ulem}

\usepackage[margin=3cm]{geometry}
\newtheorem{theorem}{Theorem}[section]
\newtheorem*{theorem*}{Theorem}

\newtheorem{proposition}[theorem]{Proposition}
\newtheorem*{prop*}{Proposition}

\newtheorem*{corollary*}{Corollary}
\newtheorem{corollary}[theorem]{Corollary}

\newtheorem{lemma}[theorem]{Lemma}
\newtheorem{claim}[theorem]{Claim}

\newtheorem{conjecture}{Conjecture}
\newtheorem{def-theorem}[theorem]{Theorem-Definition}
\newtheorem*{statement*}{Statement}

\theoremstyle{definition}
\newtheorem{remark}[theorem]{Remark}
\newtheorem*{thm*}{Theorem}
\newtheorem{question}{Question}
\newtheorem{example}[theorem]{Example}

\newtheorem{definition}[theorem]{Definition}

\renewcommand{\subset}{\subseteq}

\newcommand{\Q}{\mathbb{Q}}

\newcommand{\N}{\mathbb{N}}
\newcommand{\NN}{\mathbb{N}}

\newcommand{\R}{\mathbb{R}}

\newcommand{\C}{\mathbb{C}}

\newcommand{\Lm}{\mathcal{L}}
\newcommand{\cL}{\Lm}

\newcommand{\Lring}{\Lm_{\text{ring}}}

\newcommand{\RV}{\text{RV}}
\newcommand{\rv}{\text{rv}}

\title[Definable functions in tame expansions of ACVF]{Definable functions in tame expansions of algebraically closed valued fields}
\author{Pablo Cubides Kovacsics }
\thanks{The first author was funded by the European Research Council, ERC Grant nr. 615722, TOSSIBERG and partially funded by ERC Grant nr. 615722, MOTMELSUM. The second author was partially supported by ValCoMo, Projet ANR blanc ANR-13-BS01-0006.}
\author{Fran\c coise Delon}

\address{Pablo Cubides Kovacsics, Universit\'e de Caen, Laboratoire de math\'ematiques Nicolas Oresme, CNRS UMR 6139,
14032 Caen cedex, France }
\email{pablo.cubides@unicaen.fr}
\address{Fran\c coise Delon, CNRS, IMJ - PRG, Universit\'e Paris Diderot,
UFR de math\'ematiques, case 7012, 75205 Paris Cedex 13, France.}
\email{delon@math.univ-paris-diderot.fr}



\begin{document}
\begin{abstract} 
In this article we study definable functions in tame expansions of algebraically closed valued fields. For a given definable function we have two types of results: of type (I), which hold at a neighborhood of infinity, and of type (II), which hold locally for all but finitely many points in the domain of the function. In the first part of the article, we show type (I) and (II) results concerning factorizations of definable functions over the value group. As an application, we show that tame expansions of algebraically closed valued fields having value group $\Q$ (like $\C_p$ and $\overline{\mathbb{F}_p}^{\mathrm{alg}}(\!(t^\Q)\!)$) are polynomially bounded. In the second part, under an additional assumption on the asymptotic behavior of unary definable functions of the value group, we extend these factorizations over the residue multiplicative structure $\RV$. In characteristic 0, we obtain as a corollary that the domain of a definable function $f\colon X\subseteq K\to K$ can be partitioned into sets $F\cup E\cup J$, where $F$ is finite, $f|E$ is locally constant and $f|J$ satisfies locally the Jacobian property. 
\end{abstract}
\keywords{Algebraically closed valued fields, $C$-minimality, polynomially bounded, Jacobian Property. \textit{MSC2010}: 12L12, 12J25, 03C64.}

\maketitle
\normalem 

The o-minimality of the real field expanded by the exponential function, proved by Wilkie in \cite{wilkie96}, was one of the major achievements of model theory during the nineties. Its ubiquity and importance can also be seen in the light of Miller's growth dichotomy \cite{miller94}: an o-minimal expansion of a real field is either polynomially bounded or the exponential function is definable in it. 

During the same years, analogous minimality conditions were introduced as candidates to define tame expansions of different structures. In the case of expansions of algebraically closed valued fields, the corresponding notion was introduced by Haskell, Macpherson and Steinhorn in \cite{macphersonETAL:94, macphersonETAL:96} and is called \emph{$C$-minimality}. 

In this article we study definable functions in tame expansions of algebraically closed valued fields where $C$-minimality is taken as the tameness condition. The main theorems obtained concern the factorization of definable functions over the value group ($\Gamma$-factorization) and over the residue multiplicative structure ($\RV$-factorization). The formal definition of what factorization means will be given in the next section. For both factorizations we have two types of results: of type (I), which hold at a neighborhood of infinity, that is, they hold outside some neighborhood of $0$; and of type (II), which hold locally, for all but finitely many points of the domain. 

As an application of type (I) $\Gamma$-factorization, we show a major difference concerning the behavior at infinity of definable functions with respect to the o-minimal context: $C$-minimal valued fields having $\Q$ as their value group are polynomially bounded. This yields, in particular, that $C$-minimal expansions of algebraically closed valued fields like $\C_p$ or $\overline{\mathbb{F}_p}^{\mathrm{alg}}(\!(t^\Q)\!)$ are polynomially bounded, which radically restricts the class of definable functions in such expansions. 

$\RV$-factorizations are proven under an additional hypothesis on the asymptotic behavior of unary definable functions of the value group. In characteristic 0, we deduce from type (II) $\RV$-factorization that the domain of a definable function $f\colon X\subseteq K \to K$ can be partitioned into sets $F\cup E\cup J$, where $F$ is finite, $f|E$ is locally constant and $f|J$ satisfies locally the Jacobian property (as defined in \cite{cluckers-lipshitz:2011}). Let us now introduce these concepts and formally state the results of the article. 

\section{Main results} 

Let $(K,v)$ be a valued field. All valued fields under consideration are non-trivially valued. We denote the value group by $\Gamma_K$, the valuation ring by~$\mathcal{O}_K$, its maximal ideal by~$\mathcal{M}_K$ and the residue field by $K/v$. For $a\in K$ and $\gamma\in \Gamma_K\cup\{+\infty\}$, we let 
$B(a,\gamma):=\{x\in K: v(x-a)\geq \gamma\}$ denote the closed ball centered at $a$ of radius $\gamma$. Respectively, for $\gamma\in \Gamma_K\cup\{-\infty\}$, we let $B^\circ(a,\gamma):=\{x\in K: v(x-a)>\gamma\}$ denote the open ball centered at $a$ of radius $\gamma$ (thus $K$ is treated as an open ball). By a ball we mean either a closed or  an open ball. We let $\RV^\ast$ denote the quotient group 
\[
\RV^\ast:= K^\times/(1 + \mathcal{M}_K) 
\] 
and define $\RV:=\RV^\ast\cup\{0\}$. The function $\rv\colon K\to \RV$ denotes the quotient map which in addition sends $0$ to $0$.

We study $(K,v)$ as a first order structure using the language of valued fields $\Lm_{div}:=(+,-,\cdot,0,1,\text{div})$ where $\text{div}(x,y)$ is a binary predicate interpreted in $(K,v)$ by $v(x)\leq v(y)$. Given a language $\Lm$ extending $\Lm_{div}$, by an $\Lm$-definable set we mean a set defined in the language~$\cL$ with parameters. We will often omit the prefix $\cL$ and talk about definable sets when~$\cL$ is clear from the context. Note that $\Gamma_K$, $\RV$ and $K/v$ are all three $\cL_{div}$-interpretable. Abusing of terminology, by a definable subset of $\Gamma_K$, $K/v$ or $\RV$ we mean an interpretable subset. Let us recall the definition of $C$-minimality in this context: 

\begin{definition}\label{def:tame}
An expansion $(K,\cL)$ of $(K,\cL_{div})$ is \emph{$C$-minimal} if for every elementary equivalent structure $(K',\Lm)$, every $\Lm$-definable subset $X\subseteq K'$ is a boolean combination of balls. 
\end{definition}

Every algebraically closed valued field $(K,\cL_{div})$ is $C$-minimal. Conversely, by a result of Haskell and Macpherson in \cite{macphersonETAL:94}, every $C$-minimal valued field is algebraically closed. Further examples of $C$-minimal valued fields include algebraically closed valued fields with analytic structure as studied by Lipshitz and Robinson in \cite{lipshitzETAL:98}. From now on we work over a $C$-minimal expansion $(K,\cL)$ of an algebraically closed valued field $(K,v)$.  

The first part of the paper (Sections 3 and 4) is devoted to study $\Gamma$-factorizations. Let us provide their formal definition. 

\begin{definition}\label{def:gammafact} Let $f\colon X\subseteq K \rightarrow K$ be a function. 
\begin{enumerate}[(1)]
\item Suppose that $f$ is defined at a neighborhood of infinity. The function $f$ \emph{factorizes at infinity over $\Gamma$} if there is $h\colon \Gamma_K \rightarrow \Gamma_K$ and $\gamma_0\in \Gamma_K$ such that $v(f(x))=h(v(x))$ for all $x\in X\setminus B(0,\gamma_0)$. We say in this case that $f$ factorizes at infinity over $\Gamma$ \emph{through $h$} or that $h$ is a \emph{$\Gamma$-factorization of $f$ at infinity}.  
\item A function $f\colon X\subseteq K \rightarrow K$ \emph{factorizes over $\Gamma$} if there is a function $h\colon \Gamma_K \rightarrow \Gamma_K$
such that $v(f(x)-f(y))=h(v(x-y))$ for all distinct $x,y \in X$. In this case we say that $f$ factorizes over $\Gamma$ \emph{through $h$} or that $h$ is a \emph{$\Gamma$-factorization of $f$}. 
\item We say that $f$ \emph{locally factorizes over $\Gamma$} if for every $x\in X$ there is an open ball $B_x\subseteq X$ containing $x$ such that $f|B_x$ factorizes over $\Gamma$. For $h\colon Y\subseteq X\times \Gamma_K \rightarrow \Gamma_K$, we say that $f$ locally factorizes over $\Gamma$ \emph{through $h$} if for every $x\in X$ there is an open ball $B_x\subseteq X$ containing $x$ such that $f|B_x$ factorizes over $\Gamma$ through $h_x$. 
\end{enumerate}
\end{definition}

The following results correspond to type (I) and (II)  $\Gamma$-factorization (later Theorems \ref{thm:gamma-fact-infty} and \ref{thm:local-gamma-fact}). 

\begin{theorem*}[$\Gamma$-factorization I] Let $(K,\cL)$ be a $C$-minimal valued field and let $f\colon X\subseteq K\to K^\times$ be a definable function defined at a neighborhood of infinity. Then $f$ factorizes at infinity over $\Gamma$ through a definable function $h$.  
\end{theorem*}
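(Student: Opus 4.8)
The goal is to show that for a definable $f\colon X\subseteq K\to K^\times$ defined at a neighborhood of infinity, the value $v(f(x))$ depends only on $v(x)$, once $x$ lies outside a sufficiently large ball. The natural strategy is to work in the interpretable sort $\Gamma_K$ and exploit $C$-minimality applied to the interpretable structure on $\Gamma_K$, which is o-minimal (this is a standard consequence of $C$-minimality: definable subsets of $\Gamma_K$ are finite unions of points and intervals). So first I would set $g\colon \Gamma_K\to \Gamma_K$ heuristically by $g(\gamma)=v(f(x))$ for some $x$ with $v(x)=\gamma$; the content is that this is well-defined for $\gamma$ small enough (i.e. for $x$ near infinity) and definable.

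The key step is the following claim: there is $\gamma_0\in\Gamma_K$ such that for all $x,y\in X\setminus B(0,\gamma_0)$ with $v(x)=v(y)$ we have $v(f(x))=v(f(y))$. To prove this, consider for each $\gamma$ the fiber $X_\gamma:=\{x\in X: v(x)=\gamma\}$, which is contained in the ``annulus'' $\{x: v(x)=\gamma\}$, itself a union of open balls of the same radius. The set $D:=\{\gamma\in\Gamma_K : v(f|X_\gamma) \text{ is not constant}\}$ is an interpretable subset of $\Gamma_K$, hence by o-minimality of the $\Gamma$-sort it is a finite union of points and intervals. I must rule out that $D$ contains an interval accumulating at $-\infty$ (the ``infinity'' direction for values of elements of $K$, recalling $x$ is near infinity iff $v(x)$ is very negative). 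Suppose it does. Then on a whole interval of radii, $f$ fails to be ``radius-constant''. Using $C$-minimality of $(K,\cL)$ itself — more precisely, that a definable function $K\to K$ is, off a finite set of balls, nicely behaved, and that a definable family of subsets of $\RV$ or of the residue field is uniformly finite — one derives a contradiction: the map sending $x$ to $\rv(f(x))$ restricted to a single ball on which $f$ is ``generic'' should be either constant or injective-ish in a controlled way, forcing $v\circ f$ to be locally constant in the radius. This is where I expect the main technical work, and it will presumably invoke the basic structure theory of definable functions in $C$-minimal fields (the existence of a finite partition into balls on each of which $f$ is either constant or a bijection onto a ball, in the spirit of the Jacobian-type results, or at least the weaker statement that $f$ preserves the ``being a ball'' property off a finite set). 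The cleanest route is: take $\gamma$ large negative not in the finite part of $D$; if an interval of $D$ reaches $-\infty$, intersect with the complement of finitely many exceptional balls to get, for cofinally many radii $\gamma$, two points $x,y$ in the same annulus with $v(f(x))\ne v(f(y))$ while $f$ is ``controlled'' at both — contradiction with the ball-to-ball behavior.

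Once the claim holds, define $h\colon\Gamma_K\to\Gamma_K$ by $h(\gamma):=v(f(x))$ for any/some $x\in X_\gamma$ when $\gamma<\gamma_0$ in the appropriate sense and extend arbitrarily (say by $0$) elsewhere; then $h$ is definable because its graph is the image under the interpretation maps $v$ of the definable relation $\{(x,f(x)):x\in X\setminus B(0,\gamma_0)\}$, and by the claim it is single-valued on the relevant part of $\Gamma_K$. By construction $v(f(x))=h(v(x))$ for all $x\in X\setminus B(0,\gamma_0)$, which is exactly the assertion that $f$ factorizes at infinity over $\Gamma$ through $h$. The main obstacle, to repeat, is ruling out an interval of ``bad radii'' accumulating at infinity, which requires the $C$-minimal structure theory for definable functions $K\to K$ near a generic point of a ball rather than just the o-minimality of the value group.
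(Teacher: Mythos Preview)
Your overall shape is right---define $h(\gamma)$ as $v(f(x))$ for some $x$ with $v(x)=\gamma$, and reduce to showing the ``bad'' set $D$ of radii where this is ill-defined does not reach $-\infty$---but the crucial step is not actually carried out. You write that a contradiction should come from ``ball-to-ball behavior'' or ``Jacobian-type results'', but the Jacobian property is proved \emph{later} in the paper using $\Gamma$-factorization, so invoking it here would be circular; and the Haskell--Macpherson decomposition into locally constant pieces and local $C$-isomorphisms only gives local information near individual points, which says nothing about why two points $x,y$ in the same annulus with $f$ ``controlled'' at each should satisfy $v(f(x))=v(f(y))$. So the sketch, as it stands, has a genuine gap at exactly the point you flag as ``the main technical work''.

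The paper sidesteps this difficulty entirely by a different, more elementary trick. Rather than trying to prove directly that $v\circ f$ is constant on each annulus, it \emph{defines} $h(\gamma):=\inf\{v(f(x)):v(x)=\gamma\}$, first shows this infimum is not $-\infty$ for small $\gamma$ (via a Swiss-cheese argument on the sets $D_{\gamma,\mu}=\{x:v(x)=\gamma,\ v(f(x))\le\mu\}$), and then observes that the definable set $A:=\{x:v(f(x))=h(v(x))\}$ meets every small annulus. By $C$-minimality, a definable subset of $K$ containing points of arbitrarily small valuation must contain a full neighborhood of infinity; hence $A\supseteq K\setminus B(0,\gamma_0)$ for some $\gamma_0$, which is exactly the conclusion. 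No structure theory of definable functions (beyond the Swiss-cheese description of definable sets) is needed. The idea you are missing is this second application of $C$-minimality in $K$ itself to the set where the infimum is attained, rather than trying to argue annulus-by-annulus in $\Gamma_K$.
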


\begin{theorem*}[$\Gamma$-factorization II] Let $(K,\cL)$ be a $C$-minimal valued field and let $f\colon X\subseteq K\to K$ be a definable local $C$-isomorphism. Then there is a finite subset $F\subseteq X$ such that $f| (X\setminus F)$ locally factorizes over $\Gamma$ through a definable function $h$.  
\end{theorem*}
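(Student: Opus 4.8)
The plan is to build the local factorization out of the type (I) result applied to suitable ``difference functions'' together with the structure theory of definable functions forced by $C$-minimality. First I would fix $x_0 \in X$ and consider, for a ball $B$ around $x_0$, the two-variable definable function $(x,y) \mapsto v(f(x)-f(y))$ on pairs of distinct points of $B$. The goal at a single point $x_0$ is to show that on a small enough open ball $B_{x_0}$ this quantity depends only on $v(x-y)$. The natural tool is to regard, for each fixed $a \in B$, the unary function $g_a\colon x \mapsto f(x)-f(a)$; near $a$ this is a definable function defined on a punctured neighborhood, hence (after translating $a$ to $0$) we are in a ``neighborhood of infinity'' situation for the variable $x-a$, and the type (I) theorem gives a $\Gamma$-factorization $h_a$ of $g_a$ near $a$, i.e. $v(f(x)-f(a)) = h_a(v(x-a))$ for $x$ close enough to $a$. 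The crux is then to show that $h_a$ does not depend on $a$ (locally), and that the ``close enough'' radius can be chosen uniformly in $a$ over some ball $B_{x_0}$.

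The key steps, in order, would be: (i) Use $C$-minimality and the fact that $f$ is a local $C$-isomorphism to get, for each $x_0$, an open ball $B_{x_0} \subseteq X$ on which $f$ restricts to a $C$-isomorphism onto an open ball (a local $C$-isomorphism maps a small enough ball bijectively and order-preservingly on the ball lattice); this already forces $v(f(x)-f(a))$ to be controlled by $v(x-a)$ in a monotone way on $B_{x_0}$. (ii) For each $a \in B_{x_0}$, apply the type (I) $\Gamma$-factorization theorem to $x \mapsto f(x)-f(a)$ to obtain a definable $h_a$ with $v(f(x)-f(a)) = h_a(v(x-a))$ for all $x$ in some ball $B(a,\gamma_a) \subseteq B_{x_0}$. (iii) Exploit the $C$-isomorphism property on $B_{x_0}$ to show that the relevant radius $\gamma_a$ can be taken to be a single $\gamma_0$ (e.g.\ $\gamma_0$ = the radius of $B_{x_0}$), because the image $f(B(a,\gamma))$ is itself a ball whose radius is determined by $\gamma$ uniformly, via the induced map on the ball lattice. (iv) Show the uniformity in $a$ of the factorizing function: given $a, a'$ and $x$ with $v(x-a)=v(x-a')=\delta$ large, the values $h_a(\delta)$ and $h_{a'}(\delta)$ must agree, which one gets by a three-point comparison ($v(f(x)-f(a))$, $v(f(x)-f(a'))$, $v(f(a)-f(a'))$) together with the isosceles-triangle inequality in both $K$ and $K$; this collapses $\{h_a : a \in B_{x_0}\}$ to a single definable $h$ on $B_{x_0}$. (v) Finally, assemble these local factorizations globally: the set of $x_0$ for which no admissible $B_{x_0}$ exists is definable, and by $C$-minimality a definable subset of $K$ with empty interior that is also ``nowhere locally good'' must be finite — this is the set $F$. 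The function $h\colon Y \subseteq X \times \Gamma_K \to \Gamma_K$ is then defined by gluing the $h_{x_0}$'s, which is definable since the whole construction is uniform in parameters.

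The main obstacle I expect is step (iv), the uniformity/coherence of the factorizing functions $h_a$ across different base points $a$: the type (I) theorem is a statement ``at infinity'' that produces an $h_a$ and a threshold $\gamma_a$ with no a priori control on how $h_a$ or $\gamma_a$ vary with $a$, so one must extract uniform versions. I would handle this by running the type (I) argument relative to the parameter $a$ — i.e.\ apply it to the definable family $f_a(x) := f(x) - f(a)$ and check that the proof of $\Gamma$-factorization I is itself uniform in definable families (which it should be, since it ultimately rests on $C$-minimality, compactness, and definable choice of ball radii, all of which relativize) — and then use the $C$-isomorphism property on $B_{x_0}$ to pin down that the common value must be the unique $h$ compatible with how $f$ acts on the ball lattice of $B_{x_0}$. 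A secondary technical point is verifying that a local $C$-isomorphism genuinely does restrict to a ball-to-ball $C$-isomorphism on small balls, and that the exceptional set where this fails is finite; this should follow from the standard $C$-minimal cell-decomposition / monotonicity machinery, so I would cite or adapt the relevant structure results rather than reprove them.
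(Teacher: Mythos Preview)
Your approach has a genuine gap in step (iv), and the related claim in (iii) rests on a false premise.

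First, a minor point: step (ii) is unnecessary and slightly misdirected. The $C$-isomorphism property on $B_{x_0}$ already gives you $h_a$ for free on the whole ball: if $v(x-a)=v(x'-a)=\gamma$, then either $C(a,x,x')$ or the triple is equilateral, and in both cases the $C$-isomorphism forces $v(f(x)-f(a))=v(f(x')-f(a))$. So $h_a(\gamma):=v(f(x)-f(a))$ is well defined for all $\gamma$ greater than the radius of $B_{x_0}$, with no appeal to the type (I) theorem. (Your ``neighborhood of infinity'' framing is also backwards: near $a$ means $v(x-a)\to +\infty$, whereas type (I) concerns $v(x)\to -\infty$; you would need an inversion to even apply it.)

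The real problem is the uniformity claim. Your three-point comparison only yields $h_a(\delta)=h_{a'}(\delta)$ for $\delta\leq v(a-a')$. Indeed, if $v(x-a)=v(x-a')=\delta$ then necessarily $v(a-a')\geq\delta$ by the ultrametric inequality; the isosceles argument then gives agreement at scale $\delta$, but says nothing when $\delta>v(a-a')$, i.e.\ at scales finer than the separation of the two base points. And that is exactly the regime where $h_a$ and $h_{a'}$ can differ: a $C$-isomorphism on $B_{x_0}$ induces an inclusion-preserving bijection of sub-ball lattices, but there is no reason two disjoint sub-balls of the same radius in $B_{x_0}$ should map to balls of the same radius in $f(B_{x_0})$. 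So the statement in (iii) that ``the radius of $f(B(a,\gamma))$ is determined by $\gamma$ uniformly'' is precisely what has to be proved, and (iv) does not prove it.

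The paper's proof confronts this directly. It first shows, by a trichotomy on the germs $[h_x]$ at $+\infty$, that for all but finitely many $x$ one has $[h_x]=[h_y]$ for all nearby $y$ (the alternative would give $[h_x]<[h_y]<[h_x]$ on a ball, a contradiction). This only gives \emph{eventual} equality of $h_x$ and $h_y$. To upgrade to actual equality on a ball, the paper introduces the threshold $g_x(y)$ at which $h_x$ and $h_y$ begin to agree, uses that definable functions $K\to\Gamma_K$ are locally constant off a finite set, and then shrinks the ball so that the threshold is constant and dominated by the ball's radius. Both steps are essential, and neither is captured by your isosceles argument.
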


It is natural to restrict our study to definable local $C$-isomorphisms (Definition \ref{def:Ciso}) in the second theorem above, since by a result of Haskell and Macpherson (see Theorem \ref{thm:hasmac}), every definable function $f\colon X\subseteq K\to K$, $X$ definably decomposes into $X=F\cup E\cup J$ where $F$ is a finite set, $f|_{E}$ is locally constant and $f|_{J}$ is a local $C$-isomorphism.  

Both type (I) and type (II) $\Gamma$-factorizations also hold uniformly in definable families (see later Theorems \ref{thm:uniform-gamma-fact-infty} and \ref{thm:uniform-local-gamma-fact}). 

From type (I) $\Gamma$-factorization we deduce the above mentioned result about polynomially bounded $C$-minimal valued fields. Let us recall what polynomially bounded means in this context. The structure $(K,\cL)$ is said to be \emph{polynomially bounded} if for every definable function $f\colon X\subseteq K\to K$ there is $\gamma\in \Gamma_K$ and a non-zero integer $n$ such that $v(f(x))>nv(x)$ for all $x\in X\setminus B(0,\gamma)$. We say it is \emph{uniformly polynomially bounded}, if the analogous statement holds over definable families (see Definition \ref{def:polybdd} for the precise definition).

\begin{theorem*}[later Theorem \ref{main}] Let $(K,\cL)$ be a $C$-minimal valued field with $\Gamma_K=\Q$. Then $(K,\cL)$ is uniformly polynomially bounded. In particular, any $C$-minimal expansion of $\C_p$ or $\overline{\mathbb{F}_p}^{\mathrm{alg}}(\!(t^\Q)\!)$ is polynomially bounded. More generally, any $C$-minimal valued field $(K,\cL)$ which is $\cL$-elementary equivalent to a valued field having $\Q$ as its value group, is uniformly polynomially bounded. 
\end{theorem*}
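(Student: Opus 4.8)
The plan is to reduce the statement to the case $\Gamma_K = \Q$ and then to deduce polynomial boundedness directly from type (I) $\Gamma$-factorization. For the reduction, suppose $(K,\cL)$ is $\cL$-elementarily equivalent to a valued field $(K', \cL)$ with $\Gamma_{K'} = \Q$. Polynomial boundedness (and its uniform version) is, for each definable family, expressible by a first-order $\cL$-sentence: for a family $\{f_s : s \in S\}$ of definable functions, the assertion ``there is $n \in \Z \setminus \{0\}$ and $\gamma \in \Gamma$ with $v(f_s(x)) > n v(x)$ for all $x \in X_s \setminus B(0,\gamma)$'' is a countable disjunction over $n$ of first-order sentences, so it transfers between elementarily equivalent structures. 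Hence it suffices to prove the uniform polynomial boundedness of any $(K,\cL)$ with $\Gamma_K = \Q$.

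So assume $\Gamma_K = \Q$ and let $f\colon X \subseteq K \to K$ be definable. Discarding the finitely many points where $f$ vanishes near infinity (or observing that if $f$ is identically zero near infinity the bound is trivial), we may assume $f$ takes values in $K^\times$ on a neighborhood of infinity. By type (I) $\Gamma$-factorization, there is a definable $h\colon \Gamma_K \to \Gamma_K$ and $\gamma_0 \in \Gamma_K$ with $v(f(x)) = h(v(x))$ for all $x \in X \setminus B(0,\gamma_0)$. The key point is now to control the behavior of $h$ near $-\infty$ (i.e., as its argument tends to $-\infty$, corresponding to $x \to \infty$). Here I would invoke the structure theory of definable unary functions $h\colon \Q \to \Q$ in the interpreted value group: since $\Gamma_K = \Q$ is (as a pure ordered structure, and also with whatever induced structure $C$-minimality permits) an o-minimal ordered $\Q$-vector space, a definable function $h$ is, near $-\infty$, given by a $\Q$-affine formula $h(t) = q t + c$ for some $q \in \Q$ and $c \in \Gamma_K$. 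Then for $x$ with $v(x)$ sufficiently negative, $v(f(x)) = q\, v(x) + c$, and since $v(x) < 0$ we get $v(f(x)) > n\, v(x)$ for any integer $n > q$ (and $n \neq 0$), which is exactly the polynomial bound.

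For the uniform version one runs the same argument in families: type (I) $\Gamma$-factorization holds uniformly (Theorem~\ref{thm:uniform-gamma-fact-infty}), producing a definable family of factorizations $h_s$, and then the o-minimal cell decomposition of the value group applied to the family $\{h_s\}$ yields finitely many ``slopes'' $q$, hence a single integer $n$ bounding all of them, and a definable choice of threshold $\gamma_s$; uniform polynomial boundedness follows. Finally, specializing, any $C$-minimal expansion of $\C_p$ has value group $\Q$ and any $C$-minimal expansion of $\overline{\mathbb{F}_p}^{\mathrm{alg}}(\!(t^\Q)\!)$ has value group $\Q$, so both are (uniformly) polynomially bounded.

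The main obstacle I anticipate is the claimed description of definable unary functions $\Q \to \Q$ near $-\infty$ as eventually $\Q$-affine: this requires knowing that the structure induced by $\cL$ on the value group $\Gamma_K = \Q$ is exactly that of an ordered $\Q$-vector space with no extra structure (or at least that definable unary functions are piecewise $\Q$-affine), which is where $C$-minimality must be used essentially — presumably via the fact that in a $C$-minimal field the value group is o-minimal with only the pure ordered vector space structure. Making this step precise, and ensuring it is uniform over parameters, is the crux; everything else is bookkeeping and elementary-equivalence transfer.
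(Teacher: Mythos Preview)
Your overall architecture matches the paper's: reduce via elementary equivalence, apply type (I) $\Gamma$-factorization (uniformly), then control the resulting $h$ on the value group. The gap is precisely the step you flag as the ``main obstacle'', and it is a real one that cannot be filled the way you suggest.

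You assert that a definable $h\colon \Q\to\Q$ is eventually $\Q$-affine, appealing to the idea that the induced structure on $\Gamma_K=\Q$ is the pure ordered $\Q$-vector space. Neither claim is available. What $C$-minimality gives (Haskell--Macpherson) is only that the induced structure on $\Gamma_K$ is o-minimal; it does \emph{not} tell you this structure is the pure vector space, and eventual $\Q$-linearity of definable unary functions in an arbitrary o-minimal expansion of $(\Q,<,+)$ is exactly Conjecture~\ref{conj:Q} of the paper --- open, not a tool you can invoke. Your uniform argument inherits the same problem: ``finitely many slopes $q$'' presupposes piecewise $\Q$-affinity.

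The paper sidesteps this by proving something weaker that still suffices. By the Miller--Starchenko dichotomy (Theorem~\ref{thm:miller}), an o-minimal expansion of $(\Q,<,+,0,1)$ cannot define a compatible real closed field (since $\Q$ is not real closed), so alternative (b) holds and the structure is linearly bounded; since $\mathrm{Aut}(\Q,+)$ consists of multiplications by rationals, it is in fact $\Q$-linearly bounded (Corollary~\ref{cor:Qlinearly}). That is enough: from $|h(\gamma)|<n|\gamma|$ for $\gamma$ sufficiently negative one gets $v(f(x))>n\,v(x)$ directly, without ever knowing $h$ is affine. Uniformity is then obtained not by cell decomposition but via stable embeddedness of $\Gamma_K$ together with a compactness argument exploiting that all definable endomorphisms are $\emptyset$-definable (Theorem~\ref{linear-to-uniform}), yielding Theorem~\ref{thm:germs}. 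Replace your ``eventually $\Q$-affine'' step by this linear-boundedness argument and your proof goes through.
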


The previous theorem is obtained using a dichotomy for o-minimal expansions of ordered groups due to Miller and Starchenko \cite{miller-starchenko98}, and the following result (which uses type (I) $\Gamma$-factorization).

\begin{theorem*}[later Theorem \ref{thm:germs}] Let $(K,\cL)$ be a $C$-minimal valued field such that $\Gamma_K$ is $\Q$-linearly bounded. Then $(K,\cL)$ is uniformly polynomially bounded.  
\end{theorem*}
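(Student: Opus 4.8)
The plan is to reduce the statement, for a single definable function $f\colon X\subseteq K\to K$, to an asymptotic inequality for one definable unary function of the value group, to which the $\Q$-linear boundedness hypothesis applies directly; the uniform version will follow from the same argument carried out in families together with a compactness step.

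First I would analyze $X$ near infinity. By $C$-minimality $X$ is a boolean combination of balls, and every ball of radius in $\Gamma_K$ is bounded (contained in some $B(0,\delta)$); hence there is $\gamma_0\in\Gamma_K$ for which $X\setminus B(0,\gamma_0)$ is either empty or equal to $U:=K\setminus B(0,\gamma_0)$. In the first case the conclusion holds vacuously for that $\gamma_0$ and any $n$, so assume $X\supseteq U$. Applying $C$-minimality again, the zero set $\{x\in U:f(x)=0\}$ is a boolean combination of balls, so after enlarging $\gamma_0$ it is either all of $U$ — in which case $v(f(x))=+\infty>nv(x)$ on $U$ and we are done — or empty. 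Thus we may assume $f$ restricts to a definable function $U\to K^\times$ defined at a neighborhood of infinity, and apply type (I) $\Gamma$-factorization (Theorem \ref{thm:gamma-fact-infty}): there are a definable $h\colon\Gamma_K\to\Gamma_K$ and $\gamma_1\ge\gamma_0$ with $v(f(x))=h(v(x))$ for all $x\in K\setminus B(0,\gamma_1)$.

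Since $h$ is $\cL$-definable, its restriction to $\Gamma_K$ is a definable unary function of the value group, so the hypothesis that $\Gamma_K$ is $\Q$-linearly bounded provides $q\in\Q_{\ge 0}$ and $\gamma_2\in\Gamma_K$ such that $h(\gamma)\ge q\gamma$ for all $\gamma\le\gamma_2$ (this is where the Miller--Starchenko dichotomy, packaged in the definition of $\Q$-linear boundedness, enters: linear boundedness with field of exponents $\Q$). Pick an integer $n>\max(q,1)$. For $x$ with $v(x)<\min(\gamma_1,\gamma_2,0)$ one has $v(x)<0$, hence $nv(x)<qv(x)\le h(v(x))=v(f(x))$, i.e. $v(f(x))>nv(x)$. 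Taking $\gamma:=\min(\gamma_1,\gamma_2,0)$ gives the required $\gamma$ and $n$, so $(K,\cL)$ is polynomially bounded. For the uniform statement (Definition \ref{def:polybdd}) I would repeat this in a definable family $\{f_t\colon X_t\to K\}_{t\in S}$, using the uniform type (I) $\Gamma$-factorization (Theorem \ref{thm:uniform-gamma-fact-infty}) to obtain a single definable $H\colon S'\times\Gamma_K\to\Gamma_K$ and a definable threshold $\gamma_1(t)$ with $v(f_t(x))=H(t,v(x))$ for $v(x)<\gamma_1(t)$, where $S'\subseteq S$ is the definable set of parameters for which $f_t$ is not eventually zero near infinity (the complement being trivial). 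Applying $\Q$-linear boundedness to each $\gamma\mapsto H(t,\gamma)$ gives $q_t\in\Q_{\ge 0}$ for each $t$; the definable sets $\{t\in S': H(t,\gamma)\ge q\gamma \text{ for all sufficiently negative }\gamma\}$, for $q\in\Q_{\ge 0}$, cover $S'$, so by compactness and saturation finitely many suffice, yielding a single $q\in\Q_{\ge 0}$ and a definable $\gamma_2(t)$ valid for all $t\in S'$. Any integer $n>\max(q,1)$ then works for the whole family.

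The main obstacle I expect is the passage from the $\cL$-definable function $h$ on $K$ to a statement governed by the induced o-minimal structure on $\Gamma_K$ — that is, justifying rigorously the use of the linear-boundedness trichotomy via the $\Q$-linear boundedness hypothesis — and, in the family version, extracting a single rational exponent $q$ together with a definable threshold $\gamma_2(t)$ from the fiberwise bounds by the compactness argument above. The ball-theoretic bookkeeping near infinity (classifying $X$ and the zero set of $f$) and the final elementary inequality are routine given $C$-minimality and type (I) $\Gamma$-factorization.
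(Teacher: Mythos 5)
Your overall route is the paper's: reduce, via type (I) $\Gamma$-factorization in families (Theorem \ref{thm:uniform-gamma-fact-infty}), to a boundedness statement about the factorizing $\Gamma$-functions, then invoke $\Q$-linear boundedness and a compactness argument for uniformity. The bookkeeping at infinity (zero set, domain) and the final inequality are indeed routine. However, the two points you set aside as ``the main obstacle'' are exactly the mathematical content of the paper's argument (its Theorem \ref{linear-to-uniform}), and as written they are genuine gaps rather than routine checks.

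First, the hypothesis that $\Gamma_K$ is $\Q$-linearly bounded concerns $\cL_{ind}$-definable functions of the value group (Definition \ref{def:linbdd}), whereas your $h$, and each fiber $H(t,\cdot)$, is a priori only $\cL$-definable with parameters from $K$; to apply the hypothesis at all you need stable embeddedness of $\Gamma_K$ (Proposition \ref{prop:stably}), which you never invoke. Second, and more seriously, your finite-subcover step is unjustified: the countably many definable sets $\{t\in S': H(t,\gamma)\geqslant q\gamma \text{ eventually}\}$ cover $S'$ only in the given model, and compactness/saturation produces a finite subcover only if, in a saturated elementary extension, the fibers over nonstandard parameters are again $\Q$-linearly bounded --- and ``$\Q$-linearly bounded'' is an infinitary condition ($\exists n\in\Z$) that does not automatically transfer. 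The paper closes precisely this hole: by Miller--Starchenko (Theorem \ref{thm:miller}), linear boundedness of the o-minimal structure $(\Gamma_K,\cL_{ind})$ is equivalent to the non-definability of a field expanding $(\Gamma_K,+)$, a condition expressible by an axiom scheme and hence inherited by elementarily equivalent structures; moreover, after naming an element $1>0$ (harmless, as $C$-minimality is preserved by adding constants), every definable endomorphism is $\emptyset$-definable, each such endomorphism is bounded by some $n\gamma$ by the $\Q$-linear boundedness of the original $\Gamma_K$ (a first-order fact once $n$ is fixed, so it transfers), and compactness over the bounded family of $\emptyset$-definable endomorphisms then yields a single integer $n$ and a definable threshold working uniformly. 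Without this transfer argument (or a substitute), your extraction of a single $q$ and definable $\gamma_2(t)$, and hence the uniform conclusion, is not established.
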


In the second part of the paper (Sections 5 and 6), we extend $\Gamma$-factorizations to $\RV$-factorizations under additional hypotheses concerning definable functions of the value group. In particular, we derive $\RV$-factorization under the hypothesis that such functions are \emph{eventually $\Q$-linear} (see later Definition \ref{def:eventually}). For simplicity, we will now state both type (I) and (II) $\RV$-factorization only for definable unary functions, although we will later prove these results in families (see later Theorems \ref{thm:LJPatinfty} and \ref{thm:almostjac}).   

Let $p$ denote the \emph{characteristic exponent} of $(K,v)$, that is, $p=1$ if the characteristic of $K$ is $0$ and otherwise $p$ equals the characteristic of $K$.

\begin{theorem*}[$\RV$-factorization I] Let $(K,\cL)$ be a $C$-minimal valued field, $f\colon X\subseteq K\rightarrow K^\times$ be a definable function defined at a neighborhood of infinity. Let $h\colon Y\subseteq \Gamma_K \rightarrow \Gamma_K$ be a $\Gamma$-factorization of $f$ at infinity. If $h$ is eventually $\Q$-linear, then there are integers $m,n \in \mathbb Z$ and $c\in \RV^*$ such that, in a neighborhood of infinity,  
\[
\rv(f(x))=\rv(x)^{n/p^{m}}c.
\]  
If in addition $(K,\cL)$ is definably complete and all definable unary $\Gamma$-functions are eventually linear, the limit 
\[
a:=\lim_{x\to \infty} \frac{f(x)}{x^{n/p^{m}}}
\]
exists in $K$ and $c=\rv(a)$. 
\end{theorem*}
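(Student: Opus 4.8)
The plan is to bootstrap from the $\Gamma$-factorization I theorem, which already gives us a definable $h\colon Y\subseteq\Gamma_K\to\Gamma_K$ with $v(f(x))=h(v(x))$ near infinity. The hypothesis that $h$ is eventually $\Q$-linear means there are $a,b\in\Q$ (with $a\geq 0$, and on a neighborhood of $+\infty$ or $-\infty$ in $\Gamma_K$, depending on orientation) such that $h(\gamma)=a\gamma+b$; write $a=n'/q$ in lowest terms and note that since $h$ maps into $\Gamma_K$ we will need to control denominators, which is where the characteristic exponent $p$ enters. First I would pin down the value-group data: the relation $v(f(x))=a\,v(x)+b$ forces, for any two points $x,y$ near infinity with $v(x)=v(y)$, that $v(f(x))=v(f(y))$, so $\rv(f(x))$ depends only on $\rv(x)$ in a weak sense; the real content is to upgrade equality of valuations to a multiplicative relation in $\RV^*$.

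The key step is to produce the integers $m,n$ and the constant $c$. I would argue as follows. Fix a uniformizer-like element, or rather work directly with $\rv$: the map $x\mapsto \rv(f(x))\cdot\rv(x)^{-a}$ is, at least formally, a candidate for a constant, but $\rv(x)^{a}$ only makes sense once we clear denominators — hence the exponent $n/p^m$ rather than a genuine rational power. Concretely, since $K$ is algebraically closed, $\RV^*$ is a divisible-by-$p'$ group for $p'$ prime to $p$, and $p$-th powers may fail to have $p$-th roots in a controlled way; so I would choose $m$ so that $p^m$ kills the part of the denominator of $a$ coprime to issues of $p$-divisibility, set $n=a p^m\in\Z$, and then show the definable function $g(x):=\rv(f(x))\,\rv(x)^{-n/p^m}$ — interpreted via the well-defined map on $\RV$ using that $\RV^*$ has unique $p^m$-th roots up to the residue field, or via a section — takes a single value $c\in\RV^*$ on a neighborhood of infinity. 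That $g$ is constant should follow from $C$-minimality applied to the definable subset of $X$ where $g$ takes a fixed value: it is a boolean combination of balls, and since it is "large at infinity" (cofinite near infinity in the relevant sense) while also its complement would be, one of them must eventually be empty; combined with the fact that $v(g(x))=0$ identically (from the $\Gamma$-factorization), $g$ lands in a single coset. The main obstacle I anticipate is exactly this divisibility bookkeeping in $\RV^*$ in positive residue characteristic: making the expression $\rv(x)^{n/p^m}$ rigorously well-defined as a definable function of $x$, and verifying that the ambiguity in extracting $p^m$-th roots does not obstruct constancy — this is presumably where one uses that $K$ is algebraically closed (Frobenius, or the structure of $\RV$ as an extension of $\Gamma_K$ by the residue field) and where the precise form $n/p^m$ comes from.

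For the final "in addition" clause, assume $(K,\cL)$ is definably complete and all definable unary $\Gamma$-functions are eventually linear. I would consider the definable unary function $F(x):=f(x)/x^{n/p^m}$ (again, $x^{n/p^m}$ made sense of by fixing a definable $p^m$-th root, which exists since $K$ is algebraically closed; any two choices differ by a constant in the kernel, i.e. a $p^m$-th root of unity, which in characteristic $p$ is trivial and in characteristic $0$ has $m=0$ so the issue does not arise). By the $\RV$-part already proved, $\rv(F(x))=c$ for all $x$ near infinity, so $v(F(x))=v(c)$ is constant near infinity; hence $F$ stays in a fixed ball at infinity. Then I would show $\lim_{x\to\infty}F(x)$ exists: the image of $F$ near infinity is a definable subset of $K$ contained in a ball, and using $C$-minimality together with definable completeness one shows the intersection of the images of $F$ over $X\setminus B(0,\gamma)$, $\gamma\to\infty$, is a single point — or equivalently one runs a descending-chain argument on balls, using that $v(F(x)-F(y))$ for $x,y$ both near infinity is itself governed by an eventually-linear $\Gamma$-function (here is where "all definable unary $\Gamma$-functions are eventually linear" is needed, applied to an auxiliary factorization of $F$), which forces $v(F(x)-F(y))\to+\infty$ as $x,y\to\infty$; this Cauchy condition plus definable completeness yields the limit $a$. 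Finally $\rv$ is continuous enough (constant on $1+\cM_K$ and the balls shrink past that), so $\rv(a)=\lim\rv(F(x))=c$. The subtle point here is ensuring the Cauchy estimate: one must rule out that $F$ oscillates within its fixed ball without converging, and the eventual-linearity hypothesis on $\Gamma$-functions is precisely the tool that prevents a "constant gap" behavior $v(F(x)-F(y))=\text{const}$ at infinity.
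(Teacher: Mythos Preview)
Your proposal has a genuine gap at the arithmetic heart of the theorem: you never establish that the denominator of the slope is a power of the characteristic exponent $p$. You write the slope $a=n'/q$ in lowest terms and then say you would ``choose $m$ so that $p^m$ kills the part of the denominator of $a$ coprime to issues of $p$-divisibility, set $n=a p^m\in\Z$'' --- but no power of $p$ clears a denominator prime to $p$. If the slope happened to be $1/3$ in characteristic exponent $2$, your $n$ would never be an integer. The entire content of the exponent $n/p^m$ in the statement is precisely that the denominator \emph{is} forced to be a $p$-power, and this is something one must prove, not arrange by choice of $m$.

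The paper proceeds quite differently. It first clears denominators honestly: with slope $r=s/t$, $\gcd(s,t)=1$, and $b$ with $v(b)=\alpha$, one has $v(f(x)^t)=v(x^s b^t)$ near infinity. The constancy of the residue $[f(x)^t x^{-s} b^{-t}]/v$ is obtained not by your $C$-minimality-on-fibers sketch (which, as stated, gives no reason why any single fiber should be large at infinity) but by sending each residue value to the infimum of the $\gamma$'s at which it occurs; this is a definable map $K/v\to\Gamma_K\cup\{-\infty\}$, and strong minimality of $K/v$ forces its image to be finite, hence to attain $-\infty$. That yields $\rv(f(x))^t=\rv(x)^s\rv(b^tc)$ near infinity. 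Only then comes the step your outline is missing: a counting argument in $\RV$. Writing $t=t'p^{v_p(t)}$, $s=s'p^{v_p(s)}$, one compares, for generic $x$, the sets $B_x=\{\rv(y):\rv(f(x))^t=\rv(f(y))^t,\ v(x)=v(y)\}$ and $C_x=\{\rv(y):\rv(x)^s=\rv(y)^s\}$; the relation above forces $B_x=C_x$, while their cardinalities are $dt'$ and $s'$ respectively (with $d=|A_x|$ the size of an $\rv$-fiber of $f$). Hence $t'\mid s'$, and coprimality gives $t'=1$, i.e.\ $t=p^m$. Nothing in your sketch supplies this.

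For the final clause your Cauchy-style plan is not wrong, but it reinvents a lemma the paper already has: once $\rv(f(x)/x^{n/p^m})$ is constant near infinity, the existence of $\lim_{x\to\infty} f(x)/x^{n/p^m}$ in $K$ is a direct application of the general limit-existence result for definably complete $C$-minimal fields with eventually linear unary $\Gamma$-functions, applied to that one definable function; no separate oscillation analysis is needed.
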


\begin{theorem*}[$\RV$-factorization II] Let $(K,\cL)$ be a $C$-minimal valued field and $f\colon X\subseteq K\rightarrow K$ be a definable local $C$-isomorphism. Suppose $h\colon Y\subseteq X\times \Gamma_K\to\Gamma_K$ is a local $\Gamma$-factorization of $f$ which is eventually $\Q$-linear. Then there are a finite definable partition $X=X_1\cup\dots\cup X_\ell$, integers $n_1,\ldots,n_\ell$ and definable functions $\delta\colon X\to\Gamma_K$ and $c\colon X \to \RV^*$ such that for all $x\in X_i$, $B^\circ(x,\delta(x))\subseteq X_i$ and for all distinct $y,z\in B^\circ(x,\delta(x))$, 
\[
\rv((f(y)-f(z))=\rv((y-z))^{p^{n_i}} c(x).
\] 
In particular, when $p=1$ we may assume that $\ell=1$. If in addition $(K,\cL)$ is definably complete and all definable unary $\Gamma$-functions are eventually linear, the limit 
\[
a(x):=\lim_{y\to x} \frac{f(x)-f(y)}{(x-y)^{p^{n_i}}}
\]
exists in $K$ and $c(x)=\rv(a(x))$. 
\end{theorem*}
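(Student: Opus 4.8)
The plan is to localise the statement around each point $x\in X$ and reduce it to ``$\RV$-factorization~I'' applied to an inverted, rescaled avatar of $f$ centred at $x$. Fix $x\in X$. Since $h$ is a local $\Gamma$-factorization of the local $C$-isomorphism $f$ and is eventually $\Q$-linear, after shrinking we obtain an open ball $B=B^\circ(x,\delta_0)\subseteq X$ on which $f$ is injective (indeed a $C$-isomorphism onto a ball) and on which $v(f(y)-f(z))=q_x\,v(y-z)+\beta_x$ for all distinct $y,z\in B$, with $q_x\in\Q_{>0}$ and $\beta_x\in\Gamma_K$. Consider the definable function $\tilde F_x(t):=\bigl(f(x+t^{-1})-f(x)\bigr)^{-1}$; by injectivity of $f|B$ it is $K^\times$-valued on a neighbourhood of infinity, and it factorizes there over $\Gamma$ through the $\Q$-affine map $\gamma\mapsto q_x\gamma-\beta_x$. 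Applying ``$\RV$-factorization~I'' to $\tilde F_x$ and substituting $u=t^{-1}$ yields some $c_x\in\RV^*$ with
\[
\rv\bigl(f(x+u)-f(x)\bigr)=\rv(u)^{q_x}\,c_x \qquad\text{for all small }u\neq 0 ,
\]
where, comparing valuations with the $\Gamma$-factorization, $q_x$ is of the form $n/p^m$.

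Next I claim $q_x$ is an integral power of $p$ and that only finitely many values occur. Here the full strength of ``$C$-isomorphism'' (rather than mere injectivity) is used: if $\xi\mapsto\xi^{q_x}$ failed to be injective on $\RV^*$, one could find $u,u'$ with $v(u)=v(u')=v(u-u')$ but $\rv(f(x+u)-f(x))=\rv(f(x+u')-f(x))$, forcing $v(f(x+u)-f(x+u'))>v(f(x+u)-f(x))$ and contradicting that a $C$-isomorphism carries the ``star'' $\{x,x+u,x+u'\}$ to a star. Since $\RV^*$ surjects onto $(K/v)^\times$ with $K/v$ algebraically closed of characteristic $p$, injectivity of $\xi\mapsto\xi^{q_x}$ forces $q_x=\pm p^{k}$, and $q_x>0$ gives $q_x=p^{k}$. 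By the eventual $\Q$-linearity of the family $h$, the slope $q_x$ takes only finitely many values $p^{n_1},\dots,p^{n_\ell}$ as $x$ ranges over $X$; setting $X_i:=\{x\in X: q_x=p^{n_i}\}$ we obtain the required finite definable partition, with $\ell=1$ and $n_1=0$ when $p=1$.

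It remains to pass from the centred identity above to the stated identity for all pairs, with one constant per $X_i$, and to build $\delta$. If $x'\in B$ has $v(x'-x)$ large and $v(y-x)<v(x'-x)$, the ultrametric inequality gives $\rv(y-x')=\rv(y-x)$ and $\rv(f(y)-f(x'))=\rv(f(y)-f(x))$, so $c_{x'}=c_x$; likewise the local $\Gamma$-factorizations of $f$ at $x$ and $x'$ agree on large values, so $q_{x'}=q_x$. Chaining this over $B$ produces a definable $\delta\colon X\to\Gamma_K$ and a definable $c\colon X\to\RV^*$, $c(x):=c_x$, with $c$ constant on each $B^\circ(x,\delta(x))$, $B^\circ(x,\delta(x))\subseteq X_i$ for $x\in X_i$, and, taking $x'=z$ as centre, $\rv(f(y)-f(z))=\rv(y-z)^{p^{n_i}}c(z)=\rv(y-z)^{p^{n_i}}c(x)$ for all distinct $y,z\in B^\circ(x,\delta(x))$. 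Now assume in addition that $(K,\cL)$ is definably complete and all definable unary $\Gamma$-functions are eventually linear. A direct computation with $\rv$ shows the definable function $g_x(y):=(f(x)-f(y))\,(x-y)^{-p^{n_i}}$ — where for $n_i<0$, $(x-y)^{p^{n_i}}$ denotes the $p^{-n_i}$-th root of $x-y$, well defined since residue characteristic $p$ makes $p$-th powering bijective on $K$ — satisfies $\rv(g_x(y))=c(x)$ on $B^\circ(x,\delta(x))\setminus\{x\}$. Inverting once more, $G(u):=g_x(x+u^{-1})$ is a definable $K^\times$-valued function at infinity with $v\circ G$ constant, hence factorizes at infinity over $\Gamma$ through a constant ($\Q$-affine) map. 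Since $v\circ G$ is constant, the exponent produced by ``$\RV$-factorization~I'' for $G$ must be $0$, and its supplementary clause (available under our present hypotheses) gives that $\lim_{u\to\infty}G(u)$ exists in $K$; unwinding, $a(x):=\lim_{y\to x}(f(x)-f(y))/(x-y)^{p^{n_i}}$ exists in $K$ and $\rv(a(x))=c(x)$.

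The main obstacle is the identification of $q_x$ with a power of $p$ in the second step: this is exactly where $f$ being a $C$-isomorphism, not merely injective, is indispensable, and it is what forces the partition into $\ell$ pieces once $p>1$. A secondary technical point is that the ``moving-the-centre'' bookkeeping and the uniformity in $x$ are cleanest to carry out with the family version of ``$\RV$-factorization~I'' in hand, so in practice one proves the present theorem together with its parametric version.
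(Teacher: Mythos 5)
Your reduction of the two\hyphenation{two}-point statement to $\RV$-factorization~I via the inverted function $\tilde F_x(t)=(f(x+t^{-1})-f(x))^{-1}$ is a genuinely different (and rather slick) route to the \emph{centred} identity, and your injectivity argument via the $C$-relation on the star $\{x,x+u,x+uz\}$ is sound as far as it goes: it correctly shows that $\xi\mapsto\xi^{q_x}$ must be injective on $\RV^*$, which pins down $q_x$ in both equicharacteristic cases. The paper instead works directly with the two-variable family of residues $[(f(u)-f(z))^t(u-z)^{-s}b^{-t}]/v$ and a pigeonhole on the (strongly minimal) residue field, which avoids your ``moving the centre'' step; that step of yours has a uniformity wrinkle (the radius of validity of the centred identity at $z$ must be controlled definably in $z$ before you may ``take $x'=z$ as centre''), but this is repairable by the same $C$-minimality bookkeeping the paper uses in its Theorem~3.2.

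The genuine gap is the mixed-characteristic case, and it is visible in your conflation of the residue characteristic with the theorem's $p$, which is the characteristic exponent of $K$ (so $p=1$ whenever $\operatorname{char}K=0$, even if $\operatorname{char}(K/v)=q>0$). In mixed characteristic the theorem asserts that the exponent is $p^{n_i}=1$, but your injectivity criterion cannot deliver this: the map $\xi\mapsto\xi^{q^k}$ \emph{is} injective on $\RV^*$ when $\operatorname{char}(K/v)=q>0$ (the nontrivial $q^k$-th roots of unity of $K$ satisfy $v(\zeta-1)>0$, hence $\rv(\zeta)=\rv(1)$, and $(K/v)^\times$ has no $q$-torsion), so your argument leaves the possibilities $q_x=q^{\pm k}$, $k>0$, entirely open. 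Ruling these out is a substantive separate step in the paper (parts (i) and (ii) after its Claim~5.5): one uses a nontrivial $q^k$-th root of unity $z\in K$ and a point $a_0$ with $\neg C(a_0,a,az)$ to contradict, via the lemma comparing $\sim$ and the $C$-relation, the relation $g(u)\sim u^{q^k}$ for the rescaled $C$-isomorphism $g$ (and its inverse in the case $t=q^k$). Without an argument of this kind your proof only establishes the theorem in equicharacteristic; relatedly, your parenthetical ``$p$-th powering is bijective on $K$'' is false in mixed characteristic, where $q$-th powering on $K^\times$ has nontrivial kernel.
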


Recall that an expansion $(K,\cL)$ is\emph{ definable complete} if every definable family of nested balls whose set of radii tends to $+\infty$ has non-empty intersection. Clearly, every expansion of a complete valued field is definably complete. Note moreover that definable completeness is a first order property. Therefore, since every algebraically closed valued field $(K,\cL_{div})$ in the language of valued fields has a complete elementary extension, they are all definably complete. Unfortunately, there are $C$-minimal expansions of valued fields which are not definably complete (see for example \cite[Theorem 5.4]{delonCorps:12}). Nonetheless, since most algebraically closed valued fields of interest are complete (for example $\C_p$), any of their $C$-minimal expansions will satisfy this assumption. 

In characteristic 0, type (II) $\RV$-factorization implies the following result for definably complete $C$-minimal valued fields. 

\begin{theorem*}[later Theorem \ref{thm:LJP} in families]
Let $(K,\cL)$ be a definably complete $C$-minimal valued field of characteristic 0 in which all definable unary $\Gamma$-functions are eventually $\Q$-linear. Let $f\colon X\subseteq K\rightarrow K$ be a definable local $C$-isomorphism. Then there is a finite set $F$ such that $f|(X\setminus F)$ has locally the Jacobian property. 
\end{theorem*}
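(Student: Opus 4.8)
The plan is to read the result off type~(II) $\RV$-factorization (Theorem~\ref{thm:almostjac}) in the case $p=1$; beyond that theorem the deduction is short. First I would bring $f$ into position to apply it. By type~(II) $\Gamma$-factorization (Theorem~\ref{thm:local-gamma-fact}) there is a finite $F\subseteq X$ such that $f|(X\setminus F)$ locally factorizes over $\Gamma$ through a definable $h\colon Y\subseteq(X\setminus F)\times\Gamma_K\to\Gamma_K$. For each fixed $x$ the fibre $h_x$ is a definable unary $\Gamma$-function, hence eventually $\Q$-linear by hypothesis, so $h$ itself is eventually $\Q$-linear in the sense of Definition~\ref{def:eventually}. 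Since $f|(X\setminus F)$ is again a definable local $C$-isomorphism, type~(II) $\RV$-factorization applies to the pair $(f|(X\setminus F),h)$, and since $p=1$ we may take $\ell=1$. This produces definable functions $\delta\colon X\setminus F\to\Gamma_K$ and $c\colon X\setminus F\to\RV^*$ with $B^\circ(x,\delta(x))\subseteq X\setminus F$ for every $x\in X\setminus F$, and
\[
\rv(f(y)-f(z))=\rv(y-z)\,c(x)\qquad\text{for all distinct }y,z\in B^\circ(x,\delta(x)).
\]
Since $(K,\cL)$ is definably complete and all definable unary $\Gamma$-functions are eventually linear, the last clause of Theorem~\ref{thm:almostjac} moreover gives that $f'(x):=\lim_{y\to x}\tfrac{f(x)-f(y)}{x-y}$ exists in $K$, is nonzero, and $c(x)=\rv(f'(x))$.

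Next I would fix $x\in X\setminus F$, set $B:=B^\circ(x,\delta(x))\subseteq X\setminus F$, and check directly that $f|B$ has the Jacobian property of \cite{cluckers-lipshitz:2011}. The displayed identity gives at once: $f|B$ is injective, because $\rv(y-z)\,c(x)\neq 0$ for distinct $y,z$; the difference quotient $\tfrac{f(y)-f(z)}{y-z}$ takes the single value $c(x)=\rv(f'(x))$ of $\RV^*$ on $B$; and, applying the valuation, $v(f(y)-f(z))=v(y-z)+v(f'(x))$, so that on $B$ the map $f$ scales all valuative distances by the fixed constant $v(f'(x))$. In particular $f|B$ is a $C$-isomorphism onto its image, so $f(B)$ is a ball, and taking $z=x$ in the distance formula identifies it as $B^\circ(f(x),\delta(x)+v(f'(x)))$. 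These are exactly the defining clauses of the Jacobian property for $f$ on $B$. Carrying this out at every $x\in X\setminus F$ yields, for each such $x$, a ball $B_x\subseteq X\setminus F$ containing $x$ on which $f$ has the Jacobian property; that is, $f|(X\setminus F)$ has locally the Jacobian property with $F$ finite. For the family version (Theorem~\ref{thm:LJP}) one keeps track of the fact that $F$, $\delta$, $c$ are obtained definably in the parameters, which is already part of Theorem~\ref{thm:almostjac}.

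The one step I expect to require genuine care is the claim that $f(B)$ is a ball: this is precisely where the hypothesis that $f$ is a \emph{local} $C$-isomorphism (Definition~\ref{def:Ciso}) is used rather than mere local injectivity, via the standard fact that a $C$-isomorphism carries a ball contained in its domain onto a ball — the distance formula obtained above then pins that ball down. Everything else is formal once Theorem~\ref{thm:almostjac} is in hand, so the substance of the work lies in Sections~5--6, not here.
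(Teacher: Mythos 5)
Your proposal is correct and follows essentially the same route as the paper: obtain the local $\Gamma$-factorization from Theorem \ref{thm:local-gamma-fact}, note it is eventually $\Q$-linear by hypothesis, apply type (II) $\RV$-factorization (Theorem \ref{thm:almostjac}) with $p=1$, and then verify clauses (i)--(iv) of the Jacobian property from the resulting $\rv$-identity and the limit statement. The paper's proof is just a two-line reduction that ``leaves the verification to the reader''; you have supplied exactly that verification, including the identification of $f(B)$ as a ball via the $C$-isomorphism hypothesis.
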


The Jacobian property is taken from \cite{cluckers-lipshitz:2011} and will be later recalled (Definition \ref{def:jacobian}). A weaker version of Theorem \ref{thm:LJP} was already obtained by the second author for valued fields of equi-characteristic zero in \cite{delonCorps:12}. Here we generalize and extend the result to mixed characteristic. We also obtain the following corollaries. 

\begin{corollary*}[later Corollary \ref{cor:Jac} in families]
Let $(K,\cL)$ be as in the previous theorem and $f\colon X\subseteq K\to K$ be a definable function. Then there is a definable partition of $X$ into sets $X=F\cup E\cup J$ where $F$ is finite, $f|_{E}$ is locally constant and $f|_{J}$ has locally the Jacobian property. 
\end{corollary*}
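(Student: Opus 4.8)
The plan is to deduce the corollary directly from the preceding theorem by combining it with the Haskell--Macpherson decomposition. Given a definable $f\colon X\subseteq K\to K$, apply Theorem \ref{thm:hasmac} (stated in the excerpt as Theorem \ref{thm:hasmac}) to obtain a definable partition $X = F_0 \cup E \cup J_0$, where $F_0$ is finite, $f|_E$ is locally constant, and $f|_{J_0}$ is a definable local $C$-isomorphism. Since $(K,\cL)$ is a definably complete $C$-minimal valued field of characteristic $0$ in which all definable unary $\Gamma$-functions are eventually $\Q$-linear, the previous theorem (later Theorem \ref{thm:LJP}, in families) applies to $f|_{J_0}$: it yields a finite set $F_1 \subseteq J_0$ such that $f|_{(J_0\setminus F_1)}$ has locally the Jacobian property. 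Now set $J := J_0 \setminus F_1$ and $F := F_0 \cup F_1$; this $F$ is finite, $f|_E$ is locally constant, $f|_J$ has locally the Jacobian property, and $X = F \cup E \cup J$ is the desired definable partition.

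The only point requiring a small amount of care is to check that the three pieces can genuinely be taken to partition $X$ in a definable way. The set $J_0$ produced by the Haskell--Macpherson theorem is definable, $F_1$ is finite hence definable, so $J = J_0\setminus F_1$ and $F = F_0\cup F_1$ are definable; disjointness is immediate from the construction since we only ever remove finitely many points from the already-disjoint pieces $F_0, E, J_0$. One should also note that removing a finite set from $J_0$ does not destroy the local-$C$-isomorphism hypothesis needed to invoke Theorem \ref{thm:LJP}, which is clear, and that the ``locally the Jacobian property'' conclusion is inherited by restriction to the open sets witnessing it.

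I do not expect a genuine obstacle here: the corollary is essentially a bookkeeping consequence of Theorem \ref{thm:LJP} (in families) together with Theorem \ref{thm:hasmac}, once one has the latter available. The only ``hard'' content — establishing the Jacobian property on the local $C$-isomorphism part — is precisely what the quoted theorem provides, so the proof of the corollary itself is a short assembly argument and can be stated in a few lines.

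\begin{proof}
Let $f\colon X\subseteq K\to K$ be a definable function. By Theorem \ref{thm:hasmac}, there is a definable partition $X = F_0\cup E\cup J_0$ such that $F_0$ is finite, $f|_E$ is locally constant, and $f|_{J_0}$ is a definable local $C$-isomorphism. Applying Theorem \ref{thm:LJP} (in families) to $f|_{J_0}$ — which is legitimate since $(K,\cL)$ is a definably complete $C$-minimal valued field of characteristic $0$ in which all definable unary $\Gamma$-functions are eventually $\Q$-linear — we obtain a finite set $F_1\subseteq J_0$ such that $f|_{(J_0\setminus F_1)}$ has locally the Jacobian property. Put $F:=F_0\cup F_1$, $J:=J_0\setminus F_1$ and keep $E$ as above. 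Then $F$ is finite, all three sets are definable, $X = F\cup E\cup J$ is a partition, $f|_E$ is locally constant, and $f|_J$ has locally the Jacobian property.
\end{proof}
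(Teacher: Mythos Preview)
Your proposal is correct and follows essentially the same approach as the paper: apply the Haskell--Macpherson decomposition (Theorem~\ref{thm:hasmac}) to split $X$ into a finite part, a locally constant part, and a local $C$-isomorphism part, then apply Theorem~\ref{thm:LJP} to the last piece. The paper's proof is the family version of this argument, using Corollary~\ref{cor:hasmac} in place of Theorem~\ref{thm:hasmac}, but the logic is identical.
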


\begin{corollary*}\label{diff0}
Let $(K,\cL)$ and $f$ be as in the previous corollary. Let $D$ be the definable set $D:=\{x\in X: f'(x)=0\}$. Then $D$ can be decomposed into sets $F\cup L$ such that $F$ is finite and $f|L$ is locally constant. In particular, $f$ is $C^1$ on a cofinite subset of $X$. 
\end{corollary*}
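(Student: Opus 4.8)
The plan is to extract the decomposition of $D$ from the partition $X=F_0\cup E\cup J$ produced by Corollary~\ref{cor:Jac}, where $f|_E$ is locally constant and $f|_J$ is a local $C$-isomorphism having locally the Jacobian property (this being the Haskell--Macpherson decomposition of Theorem~\ref{thm:hasmac}, refined on $J$ by Theorem~\ref{thm:LJP}). I would simply determine where $f'$ vanishes on each piece. On the finite set $F_0$ there is nothing to check. On $E$: as $f|_E$ is locally constant, every $x\in E$ has a ball $B\subseteq X$ around it on which $f$ is constant, so $f'(x)$ exists and equals $0$; thus $E\subseteq D$. On $J$: for each $x\in J$ there is a ball $B\subseteq J$ around $x$ on which $f$ has the Jacobian property, and by Definition~\ref{def:jacobian} --- together with the last clause of $\RV$-factorization~II (Theorem~\ref{thm:almostjac}), which applies under the standing hypotheses on $(K,\cL)$ and, since $p=1$, gives $f'(x)=\lim_{y\to x}\tfrac{f(x)-f(y)}{x-y}=a(x)$ with $\rv(a(x))=c(x)\in\RV^\ast$ --- the restriction $f|_B$ is $C^1$ with a unit derivative. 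In particular $f'(x)\neq 0$, so $J\cap D=\emptyset$.

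It follows that $E\subseteq D\subseteq E\cup F_0$, hence $D=(D\cap F_0)\cup E$; setting $F:=D\cap F_0$ (a finite set) and $L:=E$ gives the decomposition $D=F\cup L$ with $f|_L=f|_E$ locally constant, as required. For the final assertion, $f$ is $C^1$ on $J$ by the above and, trivially and with vanishing derivative, on $E$. Since every definable subset of $K$ in a $C$-minimal structure is open outside a finite set, after absorbing finitely many points into $F_0$ I may assume that $E$, $J$ and $X\setminus F_0$ are open; then $E$ and $J$ form a clopen partition of $X\setminus F_0$, so $f'$ --- equal to $0$ on $E$ and to the continuous nonvanishing derivative on $J$ --- is continuous on $X\setminus F_0$. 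Thus $f$ is $C^1$ on the cofinite subset $X\setminus F_0$.

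The only step that genuinely needs care is the nonvanishing of $f'$ on $J$; everything else is bookkeeping around the decomposition and the ``open outside a finite set'' feature of $C$-minimality. If one prefers to avoid the differentiability clause of $\RV$-factorization~II (or Definition~\ref{def:jacobian} already incorporating $C^1$-ness), it suffices to observe that the Jacobian property on a ball $B\ni x$ forces $v\!\left(\tfrac{f(y)-f(x)}{y-x}\right)$ to be constant on $B\setminus\{x\}$, this constant being $+\infty$ exactly when $f|_B$ is constant --- impossible on $J$, where $f$ is locally injective --- so it is finite and $f'(x)\neq 0$.
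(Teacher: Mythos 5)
Your decomposition of $D$ is correct and is exactly the intended deduction from Corollary \ref{cor:Jac} (which the paper leaves without an explicit proof): $J\cap D=\emptyset$ because condition (ii) of Definition \ref{def:jacobian} makes $f'$ nonvanishing wherever the Jacobian property holds locally; $E\subseteq D$ up to the finitely many points at which $E$ fails to be open; and the Swiss-cheese bookkeeping is as you describe. Your closing alternative argument for the nonvanishing of $f'$ on $J$ is also fine, though redundant, since nonvanishing is already clause (ii) of the definition.

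There is, however, a gap in your justification of the final clause. You assert that on a ball $B$ where the Jacobian property holds, $f|_B$ is $C^1$, i.e.\ that $f'$ is continuous on $B$, and you later rely on ``the continuous nonvanishing derivative on $J$''. Neither Definition \ref{def:jacobian} nor the last clause of Theorem \ref{thm:almostjac} gives this: condition (iii) only says that $\rv(f'(x))$ is constant on $B$, equivalently that $v(f'(y)-f'(z))>v(c)$ for all $y,z\in B$. This bounds the oscillation of $f'$ by a fixed amount relative to its size but does not make $f'$ continuous --- a priori $f'$ could be of the form $a(1+g)$ with $g\colon B\to\mathcal{M}_K$ a wildly discontinuous definable function. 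The fix is cheap and stays inside the paper's toolbox: your argument does establish that $f'$ exists on a cofinite subset of $X$ (zero on $E$, given by the limit of Theorem \ref{thm:almostjac} on $J$, the limit existing by Lemma \ref{lem:delon1}), and on that set $f'$ is a definable function, since ``$z=\lim_{y\to x}(f(x)-f(y))/(x-y)$'' is a first-order condition. Applying Theorem \ref{thm:hasmac} to the definable function $f'$ then yields that $f'$ is continuous outside a further finite set, which is what the ``in particular'' clause requires.
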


Note that the previous corollary cannot be extended to characteristic $p>0$. Indeed, in an algebraically closed valued field of characteristic $p>0$ the function $x\mapsto x^p$ is injective and has null derivative. 

\

It is worthy to note that over complete algebraically closed valued fields, $C$-minimal expansions by analytic functions as studied by Lipschitz and Robinson \cite{lipshitzETAL:98} do satisfy all hypothesis of Theorem \ref{thm:LJP}, but our theorem is not giving anything new. Indeed, much stronger results follow (like local analyticity of definable functions) from \cite{lipshitzETAL:98} and \cite{cluckers-lipshitz:2011}. On the other hand, type (II) $\Gamma$-factorization and $\RV$-factorization generalize results of Hrushovski-Kazhdan in \cite[Section 5]{hrushovskiETAL:05}. Their results hold in algebraically closed valued fields of residue characteristic 0 under the stronger assumption of $V$-minimality, which implies both definable completeness and that the induced structure on $\RV$ is exactly the induced structure by $\cL_{div}$. As far as we know, type (I) factorizations are new in the literature. 


We would also like to point out that, modulo the following conjecture about o-minimal expansions of $(\Q,<,+,0)$, all definable functions would have $\RV$-factorizations in $C$-minimal valued fields $(K,\cL)$ for which $\Gamma_K=\Q$. In particular, if $(K,\cL)$ is definably complete of characteristic 0, the previous corollary implies that every definable function $f\colon X\subseteq K\to K$ is $C^1$, for all but finitely many points in $X$.

\begin{conjecture}\label{conj:Q} Every definable function $f\colon\Q\to\Q$ in an o-minimal expansion of $(\Q,<,+,0)$ is eventually $\Q$-linear. 
\end{conjecture}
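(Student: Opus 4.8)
The plan is to reduce the conjecture to the growth dichotomy for o-minimal expansions of ordered groups of Miller and Starchenko \cite{miller-starchenko98}. Applied to an o-minimal expansion $\mathcal{R}$ of $(\Q,<,+,0)$, it gives exactly two alternatives: either $\mathcal{R}$ is \emph{linear}, or $\mathcal{R}$ defines a real closed field. I would handle the linear case directly, and essentially all of the difficulty — and, presumably, the reason the statement is only a conjecture — lies in excluding the field case.

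Assume first that $\mathcal{R}$ is linear. Then any definable $f\colon\Q\to\Q$ is, on each part of a finite definable partition of $\Q$ into points and open intervals, of the form $x\mapsto\lambda(x-a)+c$ with $a,c\in\Q$ and $\lambda$ a definable group endomorphism of $(\Q,+)$; by o-minimality there are finitely many parts, so $f$ is given by a single such expression on some final interval $(\gamma_0,+\infty)$. But every endomorphism of the abelian group $(\Q,+)$ is multiplication by a rational number: it is determined by its value at $1$ and commutes with division by positive integers, hence sends $n/m$ to $(n/m)\lambda(1)$. Therefore $\lambda(x)=qx$ for some $q\in\Q$ and $f(x)=qx+c'$ on $(\gamma_0,+\infty)$ with $c'=c-qa\in\Q$; that is, $f$ is eventually $\Q$-linear. (Eventually constant functions are the case $q=0$.) So the conjecture follows as soon as the field alternative is ruled out.

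To rule out the field case I would start from the observation that if the field's addition could be taken to be the ambient operation $+$, there would be nothing to do: a field whose additive group is $(\Q,+)$ is one-dimensional over its prime subfield $\Q$, hence is the field $\Q$ itself, which is not real closed. The obstacle is that the dichotomy may only produce a definable real closed field living on a bounded interval $I\subseteq\Q$, with operations $\oplus,\otimes$ transported by some definable bijection and thus unrelated to $+$; then $(I,\oplus)$ is a priori an arbitrary divisible ordered abelian group definable in $\mathcal{R}$, whose $\Q$-linear dimension need not be $1$, and the naive dimension count fails. The heart of the matter is therefore to exploit the special nature of the base group — that $(\Q,<,+)$ is Archimedean and countable, and that o-minimality severely constrains the definable subsets of $\Q$ and the definable ordered groups over it — in order to show that no definable real closed field can exist over $(\Q,<,+,0)$. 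A natural line of attack is to analyse the definable group $(I,\oplus)$, aiming to force it to be definably Archimedean and then derive an incompatibility with real closedness; but I do not see which contradiction is the right one, and that gap is precisely what keeps the statement conjectural. Once the field case is excluded, the linear-case analysis above completes the proof.
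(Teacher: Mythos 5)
This statement is left as an open \emph{conjecture} in the paper: no proof is given, so there is nothing to compare your attempt against, and your proposal --- which candidly stops short of a complete argument --- does not settle it either. Your reduction to the Miller--Starchenko dichotomy is the natural starting point and is essentially what the authors do prove. Note, however, that in alternative (a) of Theorem \ref{thm:miller} the real closed field carries the \emph{ambient} addition: its additive group is $(\Q,+)$, which is one-dimensional over the prime field, so the field would be $\Q$ itself, and $\Q$ is not real closed. This is exactly your ``nothing to do'' observation, and it already excludes the field alternative of the growth dichotomy outright; it is how the paper obtains Corollary \ref{cor:Qlinearly}, namely that every o-minimal expansion of $(\Q,<,+,0,1)$ is $\Q$-linearly \emph{bounded}. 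The bounded-interval field with transported operations that you then worry about is the obstruction appearing in the Peterzil--Starchenko trichotomy, not in this growth dichotomy, so you have located the difficulty in the wrong step.

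The genuine gap sits instead inside your ``linear case''. Linear boundedness, via Theorem \ref{thm:miller}(b), yields only $\lim_{x\to+\infty}\bigl(f(x)-qx\bigr)=c$ for some $q,c\in\Q$, whereas the conjecture demands the eventual exact identity $f(x)=qx+c$. Your claim that a definable $f$ is piecewise of the form $\lambda(x-a)+c$ is the structure theorem for \emph{linear} (locally modular) o-minimal structures in the sense of Loveys--Peterzil, and it does not follow from linear boundedness: over $\R$, restricted multiplication gives a linearly bounded but non-linear o-minimal expansion of the ordered additive group, in which $x\mapsto x^2$ on $[0,1]$ is definable and not piecewise affine. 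So to prove the conjecture one must either (i) rule out a definable real closed field on a bounded subinterval of $\Q$ --- which is where the countability and Archimedean character of $(\Q,<,+)$ would have to be exploited, and which is the genuinely open point --- or (ii) show directly that for o-minimal expansions of $(\Q,<,+,0)$ the definable function $f(x)-qx-c$, which tends to $0$ and is eventually constant or strictly monotone by o-minimality, cannot be eventually strictly monotone. Neither step is supplied, which is precisely why the authors leave the statement as a conjecture.
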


The article is organized as follows. In Section 2, we provide the needed background on $C$-minimality. Section 3 is devoted to show the results about $\Gamma$-factorization. Polynomially bounded $C$-minimal valued fields are studied in Section 4. The results on $\RV$-factorizations are presented in Section 5. Finally, all results related to the Jacobian property are shown in Section 6. 

\subsubsection*{Acknowledgments} 
We would like to thank Raf Cluckers for interesting discussions around the Jacobian property.

\section{Preliminaries and auxiliary results}

Hereafter, $(K,v)$ will denote an algebraically closed valued field, $(K,\cL)$ a $C$-minimal expansion of $(K,\cL_{div})$. We define a ternary relation on $K$, called the \emph{$C$-relation}, by 
\[C(x,y,z)\Leftrightarrow v(x-y)=v(x-z)<v(y-z).\]
Boolean combinations of balls correspond precisely to quantifier free formulas in the language which only contains a predicate for the $C$-relation, which partly explains the analogy between o-minimality and $C$-minimality. 

 We use the following conventions regarding definable families of sets and functions. Given a definable set $W$, a definable family of sets in $K$ parametrized by $W$ is a definable set $X\subseteq W\times K$ such that the projection of $X$ onto the coordinates of $W$ is equal to the set $W$. We will often omit the mention ``in $K$ parametrized by $W$'' and simply write $X\subseteq W\times K$ when no confusion arises. Given a definable family $X\subseteq W\times K$, the fiber at $w\in W$ corresponds to $X_w:=\{x\in K: (w,x)\in X\}$. Similarly, a definable family of functions is a definable function $f\colon X\subseteq W\times K\to K$, and we use the notation $f_w\colon X_w\to K$ for the function given by $f_w(x):=f(w,x)$. 

\begin{remark}\label{rmk:swiss} By $C$-minimality, every definable non-empty subset $X\subseteq K$ is a finite disjoint union of ``Swiss cheeses''. A Swiss cheese is a set of the form $B\setminus (\bigcup_{i=1}^m B_i)$ where $B$ is a ball and each $B_i$ is a ball strictly contained in $B$ such that $B_i\cap B_j=\emptyset$ for $i\neq j$. Furthermore, this also holds in families: if $X\subseteq W\times K$ is a definable family, then there is a finite partition $W_1,\ldots,W_n$ of $W$ and integers $k_i, m_i$ with $i\in\{1,\ldots,n\}$ such that for all $w\in W_i$, the fiber $X_w$ is a disjoint union of $k_i$ Swiss cheeses of the form $B(w)\setminus (\bigcup_{i=1}^{m_i} B_{i}(w)$).
\end{remark}

The reader interested in the general study of $C$-minimal structures is referred both to the original articles \cite{macphersonETAL:94, macphersonETAL:96} and to \cite{cubides:14,delon:11, delonCorps:12}. 

The following two theorems due to Haskell and Macpherson in \cite{macphersonETAL:94} will be extensively used.  

\begin{theorem}[Haskell-Macpherson]\label{lem:omin} Let $(K,\cL)$ be $C$-minimal. 
\begin{enumerate}
\item The induced structure on $\Gamma_K$ is o-minimal, that is, every definable set $Y\subseteq \Gamma_K$ is a finite union of intervals and points.  
\item Any definable function $g\colon X\subseteq K\to \Gamma_K$ is locally constant on a cofinite subset of  $X$.
\item The induced structure on $K/v$ is strongly-minimal. In particular, every definable set $Y\subseteq K/v$ is either finite or cofinite, and no infinite subset of $K/v$ can be linearly ordered by a definable relation.   
\item Let $W$ be a definable set and $X\subseteq W\times K$ be a definable family such that each fiber $X_w$ is finite. Then there is $n\in \mathbb{N}$ such that each fiber has cardinality less than $n$.  
 \end{enumerate}
\end{theorem}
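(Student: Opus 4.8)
The plan is to derive all four items from the single structural input of $C$-minimality — that in every elementarily equivalent structure each definable subset of the line is a Boolean combination of balls, equivalently a finite disjoint union of Swiss cheeses (Remark~\ref{rmk:swiss}) — together with elementary ball combinatorics over an algebraically closed valued field (two balls are disjoint or nested; a ball with more than one point is infinite; within a Swiss cheese at most one excised ball can be concentric with the outer ball). Throughout I use that $v\colon K^\times\to\Gamma_K$ and $\res\colon\cO_K\to K/v$ are $\cL_{div}$-interpretable, so that preimages of interpretable sets are honest definable subsets of $K$.

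\emph{Items (1) and (3).} For definable $Y\subseteq\Gamma_K$, the set $v^{-1}(Y)\cap K^\times$ is $\cL$-definable, hence a finite union of Swiss cheeses; and the $v$-image of a Swiss cheese is a point, an interval, or empty (a ball not through $0$ has one-point image, a ball through $0$ has image a final segment of $\Gamma_K$, and excising sub-balls can at most truncate that final segment, since a sub-ball not through $0$ lies in a single infinite ``sphere'' of the outer ball and so does not affect the image). Hence $Y$ is a finite union of points and intervals, giving o-minimality of $\Gamma_K$. The same argument with $\res$ in place of $v$ shows that the residue image of a Swiss cheese inside $\cO_K$ is finite or cofinite in $K/v$ — $K/v$ carries the trivial valuation, so the image of a ball is a point or all of $K/v$, and excising sub-balls removes only finitely many residues — whence every definable $Y\subseteq K/v$ is finite or cofinite, i.e.\ $K/v$ is strongly minimal. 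The last clause of (3) is the usual consequence of strong minimality: a definable linear order $<$ on an infinite $D\subseteq K/v$ would, at a non-extremal $a\in D$, make $\{y\in D:y<a\}$ infinite and co-infinite, contradicting that $D$ (cofinite in $K/v$) has only finite or cofinite definable subsets.

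\emph{Item (4).} Suppose, for contradiction, that the fibres $X_w$ are all finite but of unbounded cardinality. The property ``$X_w$ contains an infinite ball'' is expressed by an $\cL$-formula $\theta(w)$ (quantify over a centre in $K$ and a radius in the interpretable sort $\Gamma_K$), and since every $X_w$ is finite the sentence $\forall w\in W\,\neg\theta(w)$ holds. As $C$-minimality passes to elementarily equivalent structures, by compactness there is $(K',\cL)\succ(K,\cL)$ — again $C$-minimal — containing some $w^*$ with $X'_{w^*}$ infinite. But an infinite finite union of Swiss cheeses always contains an infinite ball: take a point $x_0$ of a non-empty cheese $B\setminus\bigcup_{i\le m}B_i$; since $x_0\notin B_i$, the ultrametric inequality gives $B_i\cap B^\circ(x_0,v(x_0-a_i))=\emptyset$ for a centre $a_i$ of $B_i$, so $B^\circ(x_0,\delta)$ avoids every $B_i$ once $\delta\in\Gamma_K$ is chosen above all the $v(x_0-a_i)$ and above the radius of $B$, and $B^\circ(x_0,\delta)$ is infinite. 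So $\theta(w^*)$ holds, a contradiction. (The uniform Swiss-cheese decomposition in families from Remark~\ref{rmk:swiss}, with bounds on the numbers of cheeses and holes, then follows by applying (4) to suitable auxiliary counting families.)

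\emph{Item (2).} Let $Z:=\{x\in X:g\text{ is not locally constant at }x\}$, a definable set; I must show $Z$ is finite. If not, by (4) and the Swiss-cheese decomposition $Z$ contains an infinite ball $B_0$. For each $\gamma$, $g^{-1}(\gamma)\cap B_0$ is a finite union of Swiss cheeses and contains no infinite ball — on such a ball $g$ would be locally constant, contradicting $B_0\subseteq Z$ — hence it is finite; so $g|B_0$ is finite-to-one, with a uniform bound $N$ on its fibres by (4), and with infinite image, which contains an interval by (1). It then remains to rule out a definable finite-to-one map from an infinite ball onto an interval of $\Gamma_K$, and this is the step I have not fully resolved and expect to be the main obstacle. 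The way I would attack it is to stratify, using (1) and the Swiss-cheese decomposition, into finitely many definable pieces on which (after shrinking $B_0$) $g$ becomes a bijection between a ball and an interval, and then read off from such a bijection a definable structure on the interval incompatible with the o-minimality of $\Gamma_K$ — in effect the local cell-decomposition argument that is the technical core of Haskell and Macpherson's work.
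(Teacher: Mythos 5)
This statement is quoted by the paper from Haskell--Macpherson \cite{macphersonETAL:94} without proof, so there is no in-paper argument to compare against; judged on its own terms, your attempt is fine on items (1), (3) and (4) --- the computation of $v$- and $\res$-images of Swiss cheeses and the compactness/elementary-extension argument for uniform finiteness are exactly the standard proofs --- but it has a genuine gap at item (2), which you yourself flag. You reduce (2) to showing there is no definable finite-to-one map from an infinite ball onto a subset of $\Gamma_K$ containing an interval, and then stop. The suggested finish (``read off from such a bijection a definable structure on the interval incompatible with the o-minimality of $\Gamma_K$'') does not work as stated: a definable bijection between a ball and an interval induces on the interval only the pushforwards of definable subsets of the ball, and these are $v$-type images of Swiss cheeses, hence still finite unions of points and intervals --- no contradiction with o-minimality of $\Gamma_K$ arises at that level. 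The actual obstruction in Haskell--Macpherson is finer: from such a map one extracts (via smallest closed balls containing the finite fibres, and a case analysis on how these balls nest) either a definable injection of an interval into an infinite family of pairwise disjoint balls, and from there a definable dense linear order on an infinite subset of the residue field, contradicting clause (3); or a definable ``choice of point'' from infinitely many disjoint balls, which is ruled out separately. This is the technical core of their Lemma on locally constant $\Gamma$-valued functions and cannot be absorbed into the o-minimality of $\Gamma_K$ alone.

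A secondary, fixable slip: in the last clause of (3) you assert that a definable linear order on an infinite $D\subseteq K/v$ would make $\{y\in D: y<a\}$ infinite and co-infinite at any non-extremal $a$. That is false --- an order of type $\omega+\omega^{*}$ has every initial segment finite or cofinite. The correct argument applies your item (4) (or passes to a saturated elementary extension) to the definable family of initial segments $\{y\in D:y<a\}_{a\in D}$: if all were finite or cofinite, their cardinalities (respectively co-cardinalities) would be unbounded over an infinite $D$, contradicting uniform finiteness; equivalently, the type of an element with infinitely many predecessors and infinitely many successors is finitely satisfiable and hence realized, producing an infinite co-infinite definable subset of $K/v$.
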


To state their second theorem we need to introduce the notion of $C$-isomorphism. 

\begin{definition}\label{def:Ciso} Given an open ball $B\subseteq K$, a function $f\colon B\to K$ is a \emph{$C$-isomorphism} if $f(B)$ is an open ball and $f$ preserves both the $C$-relation and its negation, that is, if for all $x,y,z\in B$
\[C(x,y,z)\Leftrightarrow C(f(x),f(y),f(z)).\]
\end{definition}

Note that a $C$-isomorphism must be injective. The original definition of $C$-isomorphism in \cite{macphersonETAL:94} does not include the condition ``$f(B)$ is an open ball'', but for our purposes such assumption is harmless. Indeed, for $f\colon X\subseteq K\to K$ a definable local $C$-isomorphism as defined in \cite{macphersonETAL:94} (i.e., without the assumption ``$f(B)$ is an open ball''), by $C$-minimality we have that the set 
\[
\{x\in X: \text{there is no ball $B\subseteq X$ containing $x$ such that f(B) is an open ball}\},
\] 
is finite (one can use Fact 1 in \cite{macphersonETAL:94}, Lemma 1.9 in \cite{cubides:14}).

\begin{theorem}[Haskell-Macpherson]\label{thm:hasmac} Let $(K,\cL)$ be $C$-minimal and $f\colon X\subseteq K\to K$ be a definable function. Then there is a definable partition of $X$ into sets $F\cup E\cup I$ such that $F$ is finite, $f|(X\setminus F)$ is continuous, $f|E$ is locally constant and $f|I$ is a local $C$-isomorphism.
\end{theorem}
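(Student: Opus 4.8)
The plan is to exploit that each of the three local conditions on a point $x\in X$ appearing in the statement — ``$f$ is continuous at $x$'', ``$f$ is constant on some open ball $B$ with $x\in B\subseteq X$'', and ``$f$ is a $C$-isomorphism on some open ball $B$ with $x\in B\subseteq X$'' — is expressible by an $\cL$-formula in $x$. Hence the sets $E:=\{x\in X: f\text{ is locally constant at }x\}$ and $I:=\{x\in X: f\text{ is a local }C\text{-isomorphism at }x\}$ are definable, and they are disjoint because a $C$-isomorphism is injective and hence non-constant on every ball. So it suffices to show that $Z:=X\setminus(E\cup I)$ is finite and that $f$ is continuous on $E\cup I$; then $F:=Z$, together with a finite set coming from the reduction below, and the sets $E\setminus F$, $I\setminus F$ provide the desired partition. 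Since every definable subset of $K$ differs from an open set by finitely many points (by Remark~\ref{rmk:swiss}, as a non-degenerate Swiss cheese is open), we may throw those exceptional points into $F$ and assume that every point of $X$ has an open-ball neighbourhood contained in $X$. As all three conditions are local, everything reduces to the following \emph{dichotomy}: \emph{every open ball $B\subseteq X$ contains an open sub-ball $B'$ on which $f$ is either constant or a $C$-isomorphism.}

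For the dichotomy, fix an open ball $B\subseteq X$ and look at the definable set $f(B)\subseteq K$. If $f(B)$ is finite, then $B$ is a finite disjoint union of the fibres $f^{-1}(c)$, $c\in f(B)$, one of which must be infinite; an infinite definable subset of $K$ contains a ball (a non-degenerate Swiss cheese does, by Remark~\ref{rmk:swiss}), so $f$ is constant on an open sub-ball of $B$. If $f(B)$ is infinite, the task is to shrink $B$ to a sub-ball on which $f$ is a $C$-isomorphism, and this is the genuinely $C$-minimal content of the theorem. I would track the action of $f$ on the tree of sub-balls of $B$: for a sub-ball $B''\subseteq B$, the smallest ball containing $f(B'')$ and its radius depend definably on $B''$, so by o-minimality of $\Gamma_K$ (Theorem~\ref{lem:omin}(1)--(2)) these radii vary piecewise monotonically along chains of sub-balls; combining this with the uniform bounds on Swiss-cheese decompositions in definable families (Remark~\ref{rmk:swiss}) and the uniform finiteness of fibres (Theorem~\ref{lem:omin}(4)) to control how many sub-balls map into a given ball, one shrinks $B$ finitely many times until, on some sub-ball $B'$, the map $f$ is injective with $f(B')$ an open ball and preserving the inclusion order on sub-balls of $B'$; such an $f|B'$ is precisely a $C$-isomorphism. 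I expect this passage from ``infinite image'' to ``$C$-isomorphism on a sub-ball'' to be the main obstacle.

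Granting the dichotomy, the proof finishes quickly. On a ball where $f$ is a $C$-isomorphism $f$ is a homeomorphism onto an open ball, since a $C$-isomorphism maps sub-balls to sub-balls; and where $f$ is locally constant it is trivially continuous; hence $f$ is continuous on $E\cup I$. Moreover, if $Z=X\setminus(E\cup I)$ were infinite it would contain an open ball $B\subseteq Z$, and the dichotomy would produce an open sub-ball $B'\subseteq B$ on which $f$ is constant or a $C$-isomorphism; but then $B'\subseteq E\cup I$, contradicting $B'\subseteq Z$. Hence $Z$ is finite, and, taking $F$ to be $Z$ together with the finitely many points set aside in the first reduction, we obtain a definable partition $X=F\cup E\cup I$ with $f|(X\setminus F)$ continuous, $f|E$ locally constant and $f|I$ a local $C$-isomorphism, as required.
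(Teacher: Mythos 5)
Your overall architecture is reasonable: the sets $E$ and $I$ you define are definable and disjoint, the reduction to the dichotomy is sound, and, granting the dichotomy, the finiteness of $Z$ and the definability of the partition do follow. (One small repair: your justification of continuity on $I$ --- ``a $C$-isomorphism maps sub-balls to sub-balls'' --- is not quite what you need; the clean statement is that $f$ maps the cones at $x$, i.e.\ the classes $\{y:\rv(y-x)=r\}$, bijectively to the cones at $f(x)$, and the induced map on radii is an increasing bijection onto a final segment of $\Gamma_K$, which is what makes preimages of balls around $f(x)$ contain balls around $x$.)

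The genuine gap is the half of the dichotomy you flag yourself: when $f(B)$ is infinite, producing an open sub-ball on which $f$ is a $C$-isomorphism. This is not a step that can be carried by ``radii vary piecewise monotonically'' together with Remark~\ref{rmk:swiss} and Theorem~\ref{lem:omin}(4); it is the substantive content of the theorem, which is why the paper does not prove it but quotes it from \cite{macphersonETAL:94}, where the corresponding analysis occupies a large part of that article. Concretely, your sketch leaves open: (a) why $f$ becomes \emph{injective} on some sub-ball --- uniform finiteness of fibres does not by itself give injectivity on a ball; (b) why $f$ eventually preserves the \emph{negation} of $C$ as well as $C$ --- controlling the radius of the smallest ball containing $f(B'')$ only limits how $f$ can merge sub-balls, not how it can separate points that the $C$-relation says should stay together; (c) why the induced maps on the cones at the various points of $B$ can be tamed \emph{simultaneously} on one sub-ball rather than merely pointwise; and (d) why $f(B')$ can be arranged to be an open ball (this needs the extra argument the authors indicate after Definition~\ref{def:Ciso}, via Lemma~1.9 of \cite{cubides:14}). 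Unless you intend to reproduce the Haskell--Macpherson analysis in full, the correct move here is to cite their theorem, which is exactly what the paper does.
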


Using part (4) of Theorem \ref{lem:omin}, it is not difficult to see that Theorem \ref{thm:hasmac} holds also for definable families of functions as every condition is uniformly definable. For a definable set $W$ and a definable family of functions $f\colon X\subseteq W\times K\rightarrow K$, we say that $f$ is a family of local $C$-isomorphisms if for each $w\in W$ the fiber $f_w$ is a local $C$-isomorphism. The following is the family version of Theorem \ref{thm:hasmac}. 

\begin{corollary}\label{cor:hasmac} Let $(K,\cL)$ be $C$-minimal, $W$ be a definable set and $f\colon X\subseteq W\times K\to K$ be a definable family of functions. Then there is a definable partition of $X$ into sets $F\cup E\cup I$ such that for all $w\in W$, $F_w$ is finite, $f|(X_w\setminus F_w)$ is continuous, $f_w|E_w$ is locally constant and $f_w|I_w$ is a local $C$-isomorphism.
\end{corollary}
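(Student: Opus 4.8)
The plan is to apply Theorem~\ref{thm:hasmac} fiberwise and to observe that all the conditions occurring in it are uniformly $\cL$-definable, so that the three pieces of the partition can be described by formulas not mentioning the parameter~$w$. Concretely, I would set
\[
E:=\{(w,x)\in X : \text{there is an open ball }B\text{ with }x\in B\subseteq X_w\text{ and }f_w|B\text{ constant}\},
\]
\[
I:=\{(w,x)\in X : \text{there is an open ball }B\text{ with }x\in B\subseteq X_w\text{ and }f_w|B\text{ a }C\text{-isomorphism}\},
\]
and $F:=X\setminus(E\cup I)$. Each open ball $B^{\circ}(a,\gamma)\subseteq K$ is coded by its centre $a\in K$ and radius $\gamma\in\Gamma_K\cup\{-\infty\}$, and with $B$ presented in this way the clauses ``$B\subseteq X_w$'', ``$f_w|B$ is constant'' and ``$f_w|B$ is a $C$-isomorphism'' -- the last unwinding, by Definition~\ref{def:Ciso}, to preservation of the $C$-relation and of its negation together with ``$f_w(B)$ is an open ball'' -- are first order in $(w,a,\gamma)$. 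Hence $E$, $I$ and $F$ are $\cL$-definable; moreover $F\cup E\cup I$ is a partition of $X$, using that $E\cap I=\emptyset$ because the image of a constant function is a singleton and thus not an open ball, so no constant function is a $C$-isomorphism.

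Next I would verify the four required properties fiber by fiber. If $(w,x)\in E$ has witnessing ball $B$, then $B$ witnesses $(w,x')\in E$ for every $x'\in B$, so $B\subseteq E_w$; thus $E_w$ is open and $f_w|E_w$ is locally constant, hence continuous. The same argument gives that $I_w$ is open and $f_w|I_w$ is a local $C$-isomorphism, which is continuous since a $C$-isomorphism carries the sub-balls of its domain onto sub-balls of its image. As $X_w\setminus F_w=E_w\cup I_w$ is the union of two disjoint open sets on each of which $f_w$ is continuous, $f_w|(X_w\setminus F_w)$ is continuous. Finally, applying Theorem~\ref{thm:hasmac} to the single function $f_w$ produces a partition $X_w=F'_w\cup E'_w\cup I'_w$ with $E'_w\subseteq E_w$ and $I'_w\subseteq I_w$, whence $F_w\subseteq F'_w$ is finite; and part~(4) of Theorem~\ref{lem:omin}, applied to the definable family $F\subseteq W\times K$, yields that the cardinalities $|F_w|$ are even uniformly bounded in $w$.

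There is no genuine obstacle here -- this is exactly the ``all conditions are uniformly definable'' remark made just before the statement -- and the only points that deserve an explicit line are: (i) the uniform definability of ``$f_w|B$ is a $C$-isomorphism'', where the one slightly delicate clause is ``$f_w(B)$ is an open ball''; this is expressible because $f_w(B)$ is a definable set and ``$Y$ is an open ball'' is the condition $\exists a\,\exists\gamma\,\forall y\,(y\in Y\leftrightarrow v(y-a)>\gamma)$, and in any case, by the remark preceding Theorem~\ref{thm:hasmac}, one may drop this clause, since the finitely many points of each fiber that it excludes can simply be placed in $F_w$; and (ii) recording that $C$-isomorphisms are continuous and that restrictions of $C$-isomorphisms to sub-balls are again $C$-isomorphisms, both of which belong to the basic theory of $C$-minimal valued fields.
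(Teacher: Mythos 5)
Your proposal is correct and is essentially the paper's own argument: the paper gives no separate proof of this corollary, justifying it only by the remark that every condition in Theorem~\ref{thm:hasmac} is uniformly definable together with part~(4) of Theorem~\ref{lem:omin}, and your definitions of $E$, $I$ and $F$ by quantifying over balls coded by centre and radius are exactly the intended way to make that remark precise. The fiberwise comparison with Theorem~\ref{thm:hasmac} to see that $F_w$ is finite, and the observation that the delicate clause ``$f_w(B)$ is an open ball'' is first order (or can be absorbed into $F_w$ by the remark following Definition~\ref{def:Ciso}), are the right details to record.
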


Note that for all $x,y\in K^\ast$
\begin{equation*}\label{eq:rv}
\rv(x)=\rv(y) \Leftrightarrow v(x)=v(y)<v(x-y),
\end{equation*}
and therefore the condition $C(0,x,y)$ recovers the equivalence relation defined by $rv(x)=rv(y)$. We will often denote this relation, for notation convenience, by $x\sim y$. We will later need the following lemma about $\RV$-classes. 

\begin{lemma}\label{lem:sim}
Suppose that $a,b,c\in K^\times$ are elements lying in different $RV$-classes. If $a\sim a', b \sim b'$ and $c\sim c'$ then $C(a,b,c) \Leftrightarrow C(a',b',c')$.
\end{lemma}
\begin{proof} We show the following two equivalences from which the result follows: 
\begin{enumerate}
\item $C(a,b,c) \Leftrightarrow C(a',b,c)$;
\item $C(a,b,c) \Leftrightarrow C(a,b',c)$.
\end{enumerate}
By assumption on $a,b,c$ and $a'$ we have that
\begin{center}
\begin{tabular}{lll}
$C(a,b,c)$ & $\Leftrightarrow$ & $v(a)<\min(v(b),v(c))$\\
& $\Leftrightarrow$ & $v(a)=v(a')<\min(v(b),v(c))$\\
& $\Leftrightarrow$ & $C(a',b,c)$,
\end{tabular}
\end{center}
which shows (1). For (2),

\begin{center}
\begin{tabular}{lll}
$C(a,b,c)$ & $\Leftrightarrow$ & $v(a)<\min(v(b),v(c))$\\
& $\Leftrightarrow$ & $v(a)<\min(v(b'),v(c))$\\
& $\Leftrightarrow$ & $C(a,b',c)$.
\end{tabular}
\end{center}
\end{proof}

\ 

Let $(M,\cL)$ be a structure and $S$ be a $\emptyset$-definable ($\emptyset$-interpretable) set. The induced structure by $\cL$ on $S$ will be denoted by $(S,\cL_{ind})$. It consists of all $\emptyset$-$\cL$-definable (resp. $\emptyset$-$\cL$-interpretable) subsets of cartesian powers of $S$. A $\emptyset$-definable ($\emptyset$-interpretable) set $S$ is \emph{stably embedded} if every $\cL$-definable subset $X\subseteq S^n$ is already $\cL_{ind}$-definable with parameters in $S$. The following result is a particular case of Corollary 1.10 from \cite{cubidesPHD2013}. It essentially follows from \cite[Theorem 1.4]{pillay2011}. 

\begin{proposition}\label{prop:stably} Let $K$ be a $C$-minimal valued field. Then $\Gamma_K$ and $K/v$ are stably embedded. 
\end{proposition}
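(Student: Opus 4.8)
The plan is to reduce the statement to a uniform assertion about definable families and then invoke \cite[Theorem~1.4]{pillay2011} (equivalently \cite[Corollary~1.10]{cubidesPHD2013}), whose hypotheses are met by $\Gamma_K$ and $K/v$; I will describe the reduction and which features of the two structures make those hypotheses hold. Recall that a $\emptyset$-interpretable set $S$ is stably embedded exactly when every $\cL$-definable subset of $S^n$ is $\cL$-definable with parameters from $S$ alone. Fixing an $\cL$-formula $\varphi(\bar x;\bar y)$ with $\bar x$ ranging over $S^n$, form the $\emptyset$-$\cL$-definable family $\widetilde X_\varphi=\{(\bar a,\bar s)\in K^m\times S^n : {}\models\varphi(\bar s;\bar a)\}$ (parameters may be taken from $K$ since imaginaries are coded by real tuples). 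Then $S$ is stably embedded if and only if, for each $\varphi$, the fibers $\{(\widetilde X_\varphi)_{\bar a}:\bar a\in K^m\}$ are contained in a single $\emptyset$-$\cL$-definable family parametrized by a power of $S$; writing $E_\varphi$ for the $\emptyset$-definable relation ``$(\widetilde X_\varphi)_{\bar a}=(\widetilde X_\varphi)_{\bar a'}$'' on $K^m$, this holds as soon as one produces a $\emptyset$-definable injection $K^m/E_\varphi\hookrightarrow S^k$. Morally, in a monster model $\mathfrak C$ this amounts to lifting $\cL_{ind}$-elementary permutations of $S(\mathfrak C)$ to $\cL$-automorphisms of $\mathfrak C$.

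For $S=\Gamma_K$ the decisive input is o-minimality of the induced structure (Theorem~\ref{lem:omin}(1)), in its uniform form: by Theorem~\ref{lem:omin}(4) the number of maximal intervals and isolated points in the fibers of a definable family $X\subseteq K^m\times\Gamma_K$ is uniformly bounded, so uniform o-minimal cell decomposition is available. In the baby case $n=1$ one sees the result by hand: each $(\widetilde X_\varphi)_{\bar a}$ is recovered from the finite tuple of its (at most $N$) endpoints in $\Gamma_K\cup\{\pm\infty\}$ together with bounded combinatorial data, and absorbing the finitely many combinatorial patterns into finitely many auxiliary formulas yields the required $\emptyset$-definable family over $\Gamma_K^{2N}$. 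For general $n$, rather than inducting on arity one invokes \cite[Theorem~1.4]{pillay2011}: its hypotheses are supplied by the o-minimality of $(\Gamma_K,\cL_{ind})$ together with the fact that every $\cL$-definable function $K^m\to\Gamma_K$ is locally constant on a cofinite set (Theorem~\ref{lem:omin}(2)), which is precisely what prevents a $K$-parameter from being essential to a definable subset of $\Gamma_K^n$.

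For $S=K/v$ the decisive inputs are strong minimality of the induced structure and, crucially, the last clause of Theorem~\ref{lem:omin}(3): no infinite subset of $K/v$ carries a definable linear order. This ``non-coding'' property is exactly the hypothesis under which \cite[Theorem~1.4]{pillay2011} applies: were some $\cL$-definable $X\subseteq (K/v)^n$ genuinely to require a $K$-parameter, one could extract from it a definable linear order on an infinite subset of $K/v$, contradicting Theorem~\ref{lem:omin}(3); here strong minimality, together with the exchange property and dimension in $(K/v,\cL_{ind})$, is what makes the extraction work. Hence every $\cL$-definable subset of $(K/v)^n$ is $\cL$-definable over parameters from $K/v$, so $K/v$ is stably embedded.

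The main obstacle, and the real content of \cite[Theorem~1.4]{pillay2011}, is precisely showing that the dependence of $(\widetilde X_\varphi)_{\bar a}$ on $\bar a\in K^m$ is ``captured inside $S$'' — that a $K$-parameter defining a subset of $S^n$ can always be traded for parameters from $S$. For $\Gamma_K$ this rests structurally on Theorem~\ref{lem:omin}(2) (definable $\Gamma_K$-valued functions cannot spread out), and for $K/v$ on Theorem~\ref{lem:omin}(3) (absence of a definable linear order on an infinite subset). Everything else — the reduction to families and the endpoint bookkeeping in the $\Gamma_K$-case — is routine.
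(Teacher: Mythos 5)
Your overall route coincides with the paper's: the paper gives no self-contained argument either, justifying Proposition~\ref{prop:stably} simply as a particular case of \cite[Corollary~1.10]{cubidesPHD2013}, which in turn essentially follows from \cite[Theorem~1.4]{pillay2011}. The one-variable cases you do by hand are fine and are indeed immediate from Theorem~\ref{lem:omin}: a definable subset of $\Gamma_K$ is a finite union of intervals and points, hence definable over its endpoints, and a definable subset of $K/v$ is finite or cofinite, hence definable over finitely many residues; note that parts (1) and (3) of Theorem~\ref{lem:omin} are statements about sets definable with arbitrary $K$-parameters, which is exactly the \emph{external} one-variable tameness these reductions need.

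The gap is in the step you yourself identify as the real content, namely the passage to subsets of $S^n$ with $n>1$. You ``invoke'' \cite[Theorem~1.4]{pillay2011} without stating it, and the hypotheses you attribute to it are guesses rather than verifications: that result belongs to the setting of NIP theories (this is what the title of \cite{pillay2011} refers to), yet you never mention NIP, nor the fact --- which itself needs an argument, via $C$-minimality and the reduction of NIP to formulas in one variable --- that the ambient theory is NIP. The conditions you do offer as ``exactly the hypotheses'' are also shaky on their own terms: Theorem~\ref{lem:omin}(2) is stated only for unary functions $X\subseteq K\to\Gamma_K$, not for $K^m\to\Gamma_K$; Theorem~\ref{lem:omin}(4) concerns families of finite subsets of $K$, not uniform bounds on the number of intervals of fibers in $\Gamma_K$ (such a bound does follow, but by compactness, not by quoting (4)); and the claim that an $\cL$-definable subset of $(K/v)^n$ ``genuinely requiring a $K$-parameter'' would yield a definable linear order on an infinite subset of $K/v$ is precisely the nontrivial implication (essentially that every formula in residue variables is stable on $K/v$, including the propagation from one to $n$ variables), asserted rather than proved. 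So, as written, the proposal adds nothing checkable beyond the citation the paper already makes, and the added ``verification of hypotheses'' is not correct as stated; to improve on the paper's treatment you would need either to quote the cited theorem precisely and check its actual hypotheses (including NIP), or to supply the one-variable-to-$n$-variable argument yourself.
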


In our setting there are morally three infinities. We have $-\infty$ and $+\infty$ as additional elements of $\Gamma_K$ which are respectively smaller and bigger than any element in $\Gamma_K$. Furthermore we have an infinity element ``$\infty$'' (without sign) which we treat as an additional point of $K$ which satisfies the following: a set $X\subseteq K$ contains $\infty$ if and only there is $\gamma\in \Gamma_K$ such that $K\setminus B(0,\gamma)\subseteq X$. Thus, by a \emph{neighborhood of infinity} we mean a set of the form $K\setminus B(0,\gamma)$ for some $\gamma\in \Gamma_K$. We say that a family of functions $f\colon X\subseteq W\times K\to K$ is \emph{defined at a neighborhood of infinity} if for all $w\in W$, the function $f_w$ is defined at a neighborhood of infinity. 

\section{$\Gamma$-factorization}

In what follows we use the following notation: for $\delta\in\Gamma_K\cup\{-\infty\}$, let $\Gamma_K^{>\delta}$ denote the set $\Gamma_K^{>\delta}:=\{\gamma\in\Gamma_K: \gamma>\delta\}$ and analogously for $\Gamma_K^{<\delta}$ with $\delta\in\Gamma_K\cup\{+\infty\}$.

\begin{theorem}[$\Gamma$-factorization I]\label{thm:gamma-fact-infty} Let $(K,\cL)$ be $C$-minimal and $f\colon X\subseteq K\to K^\times$ be a definable function defined at a neighborhood of infinity. Then $f$ factorizes at infinity over $\Gamma$ through a definable function $h$.  
\end{theorem}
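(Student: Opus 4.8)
The statement asks for a definable $h\colon\Gamma_K\to\Gamma_K$ with $v(f(x))=h(v(x))$ for all $x$ in a neighborhood of infinity in $X$. The natural first move is to replace $f$ by the composite $v\circ f\colon X\to\Gamma_K$. Restricting to a neighborhood of infinity $X\setminus B(0,\gamma_0)$ (which exists since $f$ is defined at a neighborhood of infinity), I want to show that the value $v(f(x))$ depends only on $v(x)$. Equivalently, setting $Z:=\{x\in K : v(x)\in v(X\setminus B(0,\gamma_0))\}\setminus B(0,\gamma_0)$, I must show that the definable function $x\mapsto v(f(x))$ is constant on each ``sphere'' $S_\gamma:=\{x: v(x)=\gamma\}$ for $\gamma$ sufficiently negative, and then define $h(\gamma)$ to be that constant value (and extend $h$ arbitrarily definably elsewhere, e.g. by $0$).

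\textbf{Key steps.} (1) Fix $\gamma\in\Gamma_K$ with $\gamma<\gamma_0$ and consider the restriction of $g:=v\circ f$ to the annulus $A_\gamma:=\{x: v(x)=\gamma\}\cap X$; this is a definable map into the o-minimal set $\Gamma_K$. The crucial observation is that $A_\gamma$ is (up to finitely many points) a single open ball $B^\circ(a,\gamma)$ minus some proper subballs — in fact for $\gamma$ small enough $A_\gamma$ is cofinite in a sphere, and a sphere $\{v(x)=\gamma\}$ is a union of the maximal open subballs of $B(0,\gamma)$; the residue field $K/v$ indexes these, and it is strongly minimal by Theorem \ref{lem:omin}(3). (2) By Theorem \ref{lem:omin}(2), $g$ is locally constant on a cofinite subset of $X$; so on each maximal open subball of $B(0,\gamma)$ the function $g$ takes finitely many values, and by a further application of o-minimality of $\Gamma_K$ plus part (4) of Theorem \ref{lem:omin} (uniform finiteness), there is a uniform bound and a definable decomposition. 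The point is to push this to conclude $g$ is \emph{globally} constant on the sphere. (3) Here the strong minimality of $K/v$ does the work: the map sending a maximal open subball $B^\circ$ of $B(0,\gamma)$ to the (finite set of) values of $g$ on it is a definable map from an infinite subset of $K/v$; since no infinite subset of $K/v$ can be linearly ordered definably, and a finite subset of the o-minimal $\Gamma_K$ carries a definable linear order, such a map must be constant on a cofinite subset — giving a single value $h(\gamma)$ taken on cofinitely many subballs, hence (shrinking the neighborhood of infinity, or rather arguing that the exceptional subballs form a definable subset of $K/v$ that is finite and thus can be absorbed) on all of the sphere after possibly removing a finite set. (4) Finally, assemble: the function $\gamma\mapsto h(\gamma)$ is definable since $h(\gamma)$ is characterized as the generic value of $g$ on $\{v(x)=\gamma\}$, and one checks $v(f(x))=h(v(x))$ holds off a neighborhood of $0$ (enlarging $\gamma_0$ to swallow the finitely many bad $\gamma$ and the finitely many bad points on each good sphere — here one uses uniform finiteness from Theorem \ref{lem:omin}(4) to see the union of bad points over all $\gamma$ is still ``small'').

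\textbf{Main obstacle.} The delicate point is step (3): ruling out that $g$ genuinely varies over the maximal open subballs of a sphere. A priori $g$ could be non-constant on the sphere while still being locally constant, taking different values on different subballs indexed by $K/v$. The resolution must use that $K/v$ admits no definable infinite linear order (Theorem \ref{lem:omin}(3)) while $\Gamma_K$ is linearly ordered — so a definable map from (a cofinite subset of) the subballs to $\Gamma_K$ cannot be injective on any infinite set, and in fact, combined with o-minimality, must be generically constant. Making this precise — in particular handling the interplay between "locally constant with finitely many values" on each subball and the indexing by the residue field, and ensuring everything remains uniform as $\gamma\to-\infty$ so that a single $\gamma_0$ works — is where the real content lies. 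I also need to be slightly careful that $X$ itself near infinity may be a Swiss cheese rather than a full neighborhood of infinity; but by Remark \ref{rmk:swiss} and the definition of neighborhood of infinity, $X$ contains $K\setminus B(0,\gamma_0)$, so each sphere $\{v(x)=\gamma\}$ with $\gamma<\gamma_0$ lies entirely in $X$, which removes that concern.
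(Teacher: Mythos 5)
Your overall shape --- extract a ``generic'' value $h(\gamma)$ of $g:=v\circ f$ on each sphere $S_\gamma:=\{x:v(x)=\gamma\}$ via the orthogonality of $K/v$ and $\Gamma_K$, then argue the exceptional locus is eventually empty --- is a viable route, and genuinely different from the paper's (which simply sets $h(\gamma):=\inf\{v(f(x)):v(x)=\gamma\}$ and lets a single $C$-minimality argument do the work). But as written there are two gaps. The smaller one is your assertion in step (2) that $g$ takes only finitely many values on each maximal open subball of $B(0,\gamma)$: this is false in general (for instance $x\mapsto v(x-a)$ is definable and locally constant off $a$, yet takes infinitely many values on the one maximal subball $B^\circ(a,v(a))$ of $S_{v(a)}$ containing $a$). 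What is true, and what you actually need, is that $g$ is \emph{constant on all but finitely many} of the maximal subballs of $S_\gamma$; this follows from Remark \ref{rmk:swiss} (a finite union of Swiss cheeses either contains cofinitely many of the maximal subballs of $B(0,\gamma)$ or meets only finitely many of them) combined with the finiteness of the image of a definable map from $K/v$ to $\Gamma_K$, which is the orthogonality you invoke. The upshot is that the exceptional set $E_\gamma\subseteq S_\gamma$ is a finite union of maximal open subballs --- an \emph{infinite} set, not finitely many points.

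The fatal gap is in step (4), where you dispose of the exceptional loci by ``uniform finiteness from Theorem \ref{lem:omin}(4)''. Part (4) concerns families with finite fibers; the fibers $E_\gamma$ are unions of balls, so it does not apply, and even a uniform bound on the \emph{number} of bad subballs per sphere would not prevent $\bigcup_\gamma E_\gamma$ from meeting every sphere, i.e.\ from being unbounded --- which is precisely what must be ruled out to obtain a single $\gamma_0$. The missing ingredient is the move the paper's proof is built around: the set $\bigcup_\gamma E_\gamma$ (equivalently, the good locus $\{x: v(f(x))=h(v(x))\}$) is a definable subset of $K$, hence a finite union of Swiss cheeses; since $E_\gamma$ is never all of $S_\gamma$ (the residue field is infinite, so a sphere is not a finite union of its maximal subballs), $\bigcup_\gamma E_\gamma$ cannot contain any set $K\setminus B(0,\gamma_1)$ and is therefore contained in some $B(0,\gamma_0)$. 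With that $C$-minimality step inserted and the first point repaired, your argument closes, and it has the side benefit of bypassing the paper's Claim \ref{claim:notinfty}, since your $h(\gamma)$ is an attained value rather than an infimum that might be $-\infty$.
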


\begin{proof} 
Consider the function $g\colon \Gamma_K\to\Gamma_K\cup\{-\infty\}$ given by 
\begin{equation*}
h(\gamma):=\inf\{v(f(x)) : v(x)=\gamma\}.
\end{equation*}
By o-minimality (part (1) of Theorem \ref{lem:omin}), the function $h$ is well-defined.  

\begin{claim}\label{claim:notinfty} There is $\delta_1\in\Gamma_K$ such that $h(\gamma)\in \Gamma_K$ for all $\gamma<\delta_1$.  
\end{claim} 
For $\gamma,\mu\in \Gamma_K$, define
\[
D_{\gamma,\mu} := f^{-1}(v^{-1}(]-\infty,\mu])) \cap (v^{-1}(\gamma)), 
\]
and let $Y:=\{\gamma\in \Gamma_K: \forall \mu\in \Gamma_K \ (D_{\gamma,\mu}\neq\emptyset)\}$. Fix $\gamma\in Y$ (if $Y\neq \emptyset)$. For each $\mu\in \Gamma_K$, the set $D_{\gamma,\mu}$ is contained in $B(0,\gamma)\setminus B^\circ(0,\gamma)$. By part (1) of Theorem \ref{lem:omin} and the fact that definable families of subsets of $K$ are families of Swiss cheeses (see Remark \ref{rmk:swiss}), there is an integer $n$ such that, coinitially in $\Gamma_K$, only one of the following happens 
\begin{enumerate}
\item $D_{\gamma,\mu}$ contains a set of the form $B(0,\gamma)\setminus \bigcup_{i=1}^{n} B_{i,\mu}$, or
\item  $D_{\gamma,\mu}$ is contained in $\bigcup_{i=1}^{n} B_{i,\mu}$,  
\end{enumerate}
where the balls $B_{1,\mu},\ldots,B_{n,\mu}$ are of the form $B_{i,\mu}=B^\circ(a_{i,\mu},\gamma)$ with $v(a_{i,\mu})=\gamma$. 

Let us first show that for all $\gamma\in Y$, (1) cannot hold coinitially in $\Gamma_K$. For suppose for a contradiction that there is $\gamma\in Y$ such that (1) holds coinitially in $\Gamma_K$. Since $D_{\gamma,\mu'}\subseteq D_{\gamma,\mu}$ for all $\mu'\leqslant \mu$, by part (1) of Theorem \ref{lem:omin}, the balls $B_{i,\mu}$ can be taken equal coinitiallly in $\Gamma_K$. But this implies that the intersection $\bigcap_{\mu\in\Gamma_K} D_{\gamma,\mu}\neq\emptyset$, which is a contradiction since for every $x\in B(0,\gamma)$ there is $\mu$ such that $x\notin D_{\gamma,\mu}$. 

Therefore, for all $\gamma\in Y$, we must have that (2) holds coinitially in $\Gamma_K$. So for every $\gamma\in Y$, there is an element $\mu_{\gamma}\in \Gamma_K$ such that for all $\mu<\mu_{\gamma}$, the set $D_{\gamma,\mu}$ is contained in a union of $n$ open balls $B^\circ(a,\gamma)$ with $v(a)=\gamma$. Consider the definable set 
\[
\bigcup_{\gamma\in Y, \mu < \mu_{\gamma}} D_{\gamma,\mu}.
\] 
If $Y$ is infinite, we contradict $C$-minimality, as such set is not a finite disjoint union of Swiss cheeses. Therefore, $Y$ is finite. Let $\delta_1$ be the minimal element of $Y$ if $Y\neq\emptyset$ or $\delta_1=0$ if $Y=\emptyset$. By construction, $\delta_1$ satisfies the claim. 

\

For $\gamma\in \Gamma_K$, define 
\[
A_\gamma := \{ x \in K : v(x)=\gamma \wedge v(f(x))=h(\gamma) \}.
\] 
By Claim \ref{claim:notinfty}, the union $\bigcup_{\gamma\in\Gamma_K} A_\gamma$ contains elements of arbitrarily small valuation. Therefore, by $C$-minimality, it contains a set of the form $K\setminus B(0,\gamma_0)$ with $\gamma_0\in \Gamma_K$. Let $\delta_2$ be the maximal such $\gamma_0$, if existing, or $\delta_2=0$ otherwise. To complete the proof, we show that $v(f(x))=h(v(x))$ for all $x\in K\setminus B(0,\delta_2)$. Indeed, if $x\in K\setminus B(0,\delta_2)$, then $x\in A_{\gamma}$ for some $\gamma\in \Gamma_K$. But by the definition of $A_\gamma$ this only holds if $v(x)=\gamma$ and $v(f(x))=h(x)$.  
\end{proof}

\begin{theorem}[$\Gamma$-factorization II]\label{thm:local-gamma-fact} Let $(K,\cL)$ be a $C$-minimal valued field and let $f\colon X\subseteq K\to K$ be a definable local $C$-isomorphism. Then there is a finite subset $F\subseteq X$ such that $f| (X\setminus F)$ locally factorizes over $\Gamma$ through a definable function $h$.  
\end{theorem}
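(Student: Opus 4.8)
\textbf{Proof plan for Theorem \ref{thm:local-gamma-fact} ($\Gamma$-factorization II).}

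The plan is to produce, for each point $x$ in the domain (away from a finite exceptional set), an open ball $B_x\subseteq X$ on which $f$ restricts to a $C$-isomorphism, and then to show that on such a ball the quantity $v(f(y)-f(z))$ depends only on $v(y-z)$. The starting point is the hypothesis that $f$ is a definable local $C$-isomorphism, so by definition every point has some open ball neighborhood $B\subseteq X$ with $f|B$ a $C$-isomorphism and $f(B)$ an open ball. Fix such a ball $B$ with centre $a$ and consider, on $B$, the two-variable definable function $(y,z)\mapsto v(f(y)-f(z))$ defined on distinct pairs. I would first reduce to showing: for a fixed open ball $B$ on which $f$ is a $C$-isomorphism, there is a definable $h_B\colon\Gamma_K\to\Gamma_K$ with $v(f(y)-f(z))=h_B(v(y-z))$ for all distinct $y,z\in B$ — this is exactly ``$f|B$ factorizes over $\Gamma$ through $h_B$''. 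The uniformity (a single $h$ with $h_x=h|\{x\}\times\Gamma_K$) should then follow by running the argument in the definable family indexed by $x\in X\setminus F$, using Corollary \ref{cor:hasmac} and the family versions of the o-minimality statements in Theorem \ref{lem:omin}.

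The heart of the matter is that a $C$-isomorphism $f\colon B\to B'$ between open balls is forced, by the $C$-relation alone, to respect the ``radius of separation'' of pairs of points, up to a monotone reparametrization. Concretely: fix $y_0\ne z_0$ in $B$ and let $\gamma=v(y_0-z_0)$. For any other pair $y,z$ with $v(y-z)=\gamma$ I want $v(f(y)-f(z))=v(f(y_0)-f(z_0))$, i.e. $f(y_0)-f(z_0)$ and $f(y)-f(z)$ have the same valuation. This is a statement about the image of a single closed ball $B(y_0,\gamma)$ (equivalently a coset of $B^\circ(\cdot,\gamma)$) under $f$: a $C$-isomorphism maps a maximal sub-ball of $B$ of a given ``level'' onto a sub-ball of $B'$, and maps the collection of maximal proper sub-balls of $B(y_0,\gamma)$ onto the collection of maximal proper sub-balls of its image. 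One can phrase this purely in terms of the $C$-relation: for points $u,v,w$ in $B$, $C(u,v,w)$ says exactly that $v,w$ lie in the same maximal proper sub-ball of the smallest ball containing all three while $u$ does not, so preservation of $C$ in both directions yields that $f$ sends the ball $B(y_0,\gamma)$ to some ball, say $B(f(y_0),\gamma')$, with $\gamma'$ depending only on $\gamma$ (and on $B$), not on $y_0$ within $B$; then for any $y,z\in B(y_0,\gamma)$ with $v(y-z)=\gamma$ (i.e. $y,z$ in distinct maximal proper sub-balls of $B(y_0,\gamma)$) we get $f(y),f(z)$ in distinct maximal proper sub-balls of $B(f(y_0),\gamma')$, hence $v(f(y)-f(z))=\gamma'$. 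Define $h_B(\gamma):=\gamma'$. That $h_B$ is well-defined on all of $v(B-B)$ and extends to a definable function $\Gamma_K\to\Gamma_K$ (on the rest one may set it arbitrarily, e.g. equal to its values near the relevant cut) follows from o-minimality of $\Gamma_K$ applied to the definable graph.

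For the uniform statement I would work with the definable family $\{f_x:=f|B_x\}_{x}$, where $B_x$ is chosen definably — e.g. take $B_x$ to be the maximal open ball around $x$ contained in $X$ on which $f$ is a $C$-isomorphism with open-ball image, which is a definable choice by $C$-minimality plus Theorem \ref{lem:omin}(4) (boundedness of finite fibers) — and apply the previous paragraph fibrewise to obtain $h_x$; definability of the map $x\mapsto h_x$, i.e. of $h$ as a function on a definable subset of $X\times\Gamma_K$, is automatic since every step was uniformly definable. The finite set $F$ absorbs the points where $f$ fails to be a local $C$-isomorphism onto an open ball (finite by the remark after Definition \ref{def:Ciso}) together with any points where the above definable choice of $B_x$ degenerates. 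The main obstacle I expect is the bookkeeping that turns ``$f$ preserves $C$ and $\lnot C$'' into the clean statement ``$f$ sends the closed ball of radius $\gamma$ around $y_0$ onto a closed ball whose radius depends only on $\gamma$''; the subtlety is ruling out that two distinct maximal proper sub-balls of $B(y_0,\gamma)$ could be sent into the same maximal proper sub-ball of the image ball, or that $f(B(y_0,\gamma))$ is not itself a ball — but both are exactly what preservation of $C$ and $\lnot C$ forbid, and one should be able to extract this from the same facts (Fact 1 of \cite{macphersonETAL:94}, Lemma 1.9 of \cite{cubides:14}) already invoked in the discussion following Definition \ref{def:Ciso}.
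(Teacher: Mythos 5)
There is a genuine gap, and it sits at the heart of your argument: namely the reduction to the claim that, for a fixed open ball $B$ on which $f$ is a $C$-isomorphism with open-ball image, the quantity $v(f(y)-f(z))$ depends only on $v(y-z)$ for arbitrary distinct $y,z\in B$. This is false. Preservation of $C$ and $\neg C$ only forces $f$ to preserve the tree of sub-balls; it cannot compare the radii of two \emph{disjoint} sub-balls, because no $C$-statement about points of $B$ expresses ``$v(y_0-z_0)=v(y-z)$'' when $B(y_0,\gamma)$ and $B(y,\gamma)$ are disjoint. Concretely, take $K=\overline{\mathbb{F}_2}(\!(t^\Q)\!)$ with $\cL=\cL_{div}$ (so $(K,\cL)$ is $C$-minimal), $B=\mathcal{M}_K=B^\circ(0,0)$, and
\[
f(x)=\begin{cases} x & \text{if } 0<v(x)\leq 1,\\ x^2/t & \text{if } v(x)>1.\end{cases}
\]
One checks directly that $f$ is a definable bijection of $\mathcal{M}_K$ onto itself preserving $C$ and $\neg C$ (the two pieces are radially separated, so every triple keeps its configuration), hence a $C$-isomorphism on the single open ball $\mathcal{M}_K$ in the sense of Definition \ref{def:Ciso}. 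Yet a pair $y,z$ with $v(y)=v(z)=1/2$ and $v(y-z)=\gamma>1$ satisfies $v(f(y)-f(z))=\gamma$, while a pair $y',z'$ with $v(y'),v(z')>1$ and $v(y'-z')=\gamma$ satisfies $v(f(y')-f(z'))=2\gamma-1$. So no $h_B$ exists on all of $B$, and your assertion that $f(B(y_0,\gamma))$ is a ball of radius $\gamma'$ ``depending only on $\gamma$, not on $y_0$'' fails. The theorem survives only because it asks for factorization on \emph{some} open ball around each point, which here must be strictly smaller than the maximal $C$-isomorphism ball.

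What does survive of your idea, and what the paper actually uses, is the \emph{radial} statement: $h_x(\gamma):=v(f(x)-f(y))$ for $y$ with $v(x-y)=\gamma$ is well defined, because ``$v(x-y)=v(x-y')$'' is equivalent to $\neg C(y,x,y')\wedge\neg C(y',x,y)$ and hence is preserved by $f$. But $h_x$ only controls pairs through the centre $x$; for $y,z$ with $C(x,y,z)$ (i.e.\ $y,z$ strictly closer to each other than to $x$) one needs $h_y$, and the whole difficulty is to show that $h_y$ eventually agrees with $h_x$ for $y$ near $x$. The paper does this by comparing the germs $[h_x]$ and $[h_y]$ at $+\infty$, discarding the finitely many $x$ where the germs disagree on every neighbourhood of $x$ (this, and not any failure of the local $C$-isomorphism condition, is the real source of the exceptional set $F$), and then shrinking $B_x$ several times so that every radius $v(y-z)$ occurring in the final ball lies beyond the threshold where $h_y$ and $h_x$ coincide. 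Your plan contains none of this, and the counterexample above shows it cannot be repaired without an argument of that kind.
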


\begin{proof} Let $x\in X$ and $B_x=B^\circ(x,\gamma_0(x))$ be maximal such that $f_{|B_x}$ is a $C$-isomorphism. Note that $\gamma_0\colon X\to \Gamma_K\cup\{-\infty\}$ is definable. The definition of $C$-isomorphism implies that the function $h_{x}\colon \Gamma_K^{>\gamma_0(x)}\rightarrow \Gamma_K$ defined by 
\begin{equation*}
h_{x}(\gamma)= v(f(x)-f(y)) \text{ for some (all) $y$ such that $v(x-y)=\gamma$,}
\end{equation*} 
is well defined. For $\square\in\{=,<,>\}$, we write $[h_x] \square [h_y]$ to express the following property 
\[
[h_x] \square [h_y] \Leftrightarrow (\exists\delta\in\Gamma)(\forall \gamma>\delta) (h_x(\gamma) \square h_y(\gamma)).
\]
Thus, $[h_x]=[h_y]$ means that the functions $h_x$ and $h_y$ are eventually equal. 

\begin{claim}\label{claim:1.1} There is a definable function $\gamma_1\colon X\to \Gamma_K\cup\{-\infty\}$ such that for all $x\in X$, $\gamma_1(x)\geqslant\gamma_0(x)$ and for all $y\in B^\circ(x,\gamma_1(x))\setminus \{x\}$, either $[h_x]=[h_y]$, $[h_x]>[h_y]$ or $[h_x]<[h_y]$. 
\end{claim}

Consider the definable sets $A_{\square}(x):=\{y\in B_x: [h_x]\square[h_y]\}$ where $\square\in\{=,<,>\}$. These sets are disjoint and, by o-minimality, they cover $B_x$. Therefore, by $C$-minimality, there is $\gamma_1(x)\geqslant \gamma_0(x)$ minimal such that $B^\circ(x,\gamma_1(x))\setminus\{x\}$ is contained in one of them, which shows the claim.  

\

In what follows, set $B_x:=B^\circ(x,\gamma_1(x))$. 

\begin{claim}\label{claim:2} For all but finitely many $x\in X$, for all $y\in B_x$, $[h_x]=[h_y]$. 
\end{claim}

Suppose not. Then, by Claim \ref{claim:1.1}, there are infinitely many $x\in X$ for which for all $y\in B_x\setminus\{x\}$, either $[h_x]< [h_y]$ or $[h_x]>[h_y]$. Suppose that there are infinitely many $x\in X$ for which $[h_x]< [h_y]$ for all $y\in B_x\setminus\{x\}$ (the other case is analogous). By $C$-minimality, there is an open ball $B\subseteq X$ such that for all $x\in B$ for all $y\in B_x\setminus\{x\}$, $[h_x]<[h_y]$. But this implies a contradiction, since for $x,y\in B$ such that $x\neq y$ we have that $[h_x]<[h_y]<[h_x]$. This shows  the claim. 

\

Let $F\subseteq X$ be the finite set given by Claim \ref{claim:2} and let $x\in X\setminus F$. Let $g_x\colon B_x\to\Gamma_K\cup\{-\infty\}$ be the function sending $y$ to the smallest $\delta\in \Gamma_K\cup\{-\infty\}$ such that $(\forall \gamma>\delta) (h_x(\gamma) = h_y(\gamma))$, which exists since $[h_x]=[h_y]$. By part (2) of Theorem \ref{lem:omin}, the function $g_x$ is locally constant on $B_x\setminus W_x$, with $W_x$ a finite set. Let $\gamma_3\colon X\setminus F\to \Gamma_K\cup\{\infty\}$ be the definable function sending $x$ to the minimal $\gamma$ such that $B^\circ(x,\gamma)\subseteq B_x\setminus W_x$. Reset $B_x:=B^\circ(x,\gamma_3(x))$. Consider the set 
\[
A:=\{x\in X\setminus F: \neg(\exists\gamma\in \Gamma_K) (\gamma\geqslant\gamma_3(x) \wedge (\forall y,z\in B^\circ(x,\gamma)\setminus \{x\})(g_x(y)=g_x(z))\},
\] 
consisting of elements $x \in X\setminus F$ for which there is no open ball $B$ around $x$ such that $g_x$ restricted to $B\setminus \{x\}$ is constant. 
\begin{claim} The set $A$ is finite. 
\end{claim}

Suppose not. Since $A$ is definable, by $C$-minimality, there is an open ball $B\subseteq A$. Pick $x\in B$ and $y\in B\cap B_x$. Then, since $y\in B_x$, there is an open ball $B'\subseteq B\cap B_x$ containing $y$ and such that $g_x$ is constant on $B'$. We claim that $g_y$ is defined and constant on $B'\setminus\{y\}$, which contradicts that $y\in A$. It suffices to show that there is some $\delta$ such that for all $z\in B'\setminus\{y\}$, $h_y(\gamma)=h_z(\gamma)$ for all $\gamma>\delta$. Indeed, if such $\delta$ exists, then there is a minimal such element which would be equal to $g_y(z)$ for all $z\in B'\setminus\{y\}$. We take $\delta=g_x(z)=g_x(y)$, which is well defined since by assumption $g_x$ is constant on $B'$. By definition of $g_x$ we have that for all $\gamma>g_x(z)$
\[
h_z(\gamma)=h_x(\gamma)=h_y(\gamma), 
\]    
which completes the claim. 

\

Let $x\in X\setminus (A\cup F)$. Since $x\notin A$, let $\gamma_4\colon  X\setminus (A\cup F)\to\Gamma_K\cup\{-\infty\}$ be the definable function sending $x$ to the minimal $\gamma\geqslant\gamma_3(x)$ such that $g_x$ is constant on $B^\circ(x,\gamma)\setminus\{x\}$. Finally, reset $B_x$ to now denote the maximal open ball $B^\circ(x,\gamma_5(x))$, where 
\[
\gamma_5(x)=\max\{\gamma_4(x),g_x(z)\} \text{ for some (all) $z\in B^\circ(x,\gamma_4(x))\setminus\{x\}$}.
\]
Then, $f_{|B_x}$ factorizes over $\Gamma$ through $h_x$. 
\end{proof}

\subsection*{Uniform results} The reader can check that the proofs of both Theorem \ref{thm:gamma-fact-infty} and \ref{thm:local-gamma-fact} are uniform in parameters. We provide the exact statements of what we mean by `uniform in parameters' for the reader's convenience. 

\begin{theorem}\label{thm:uniform-gamma-fact-infty} Let $(K,\cL)$ be $C$-minimal and $f\colon X\subseteq W\times K\rightarrow K^\times$ be a definable family of functions defined at a neighborhood of infinity. Then there is a definable family of functions $h\colon Y\subseteq W\times \Gamma_K\to \Gamma_K$ such that for all $w\in W$, $f_w$ factorizes at infinity over $\Gamma$ through $h_{w}$. 
\end{theorem}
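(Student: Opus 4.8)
The plan is to obtain Theorem~\ref{thm:uniform-gamma-fact-infty} by rerunning the proof of Theorem~\ref{thm:gamma-fact-infty} with the parameter $w\in W$ carried along as a dummy variable throughout, using at each step the uniform (family) versions of the ingredients already available. Concretely, for a definable family $f\colon X\subseteq W\times K\to K^\times$ with each $f_w$ defined at a neighborhood of infinity, first define $h\colon W\times\Gamma_K\to\Gamma_K\cup\{-\infty\}$ by $h(w,\gamma):=\inf\{v(f_w(x)):v(x)=\gamma\}$; this is a definable function, and by part (1) of Theorem~\ref{lem:omin} applied fiberwise it is well-defined (the infimum of a definable subset of $\Gamma_K$ is attained or is $-\infty$). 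So the only content is to show that for each $w$ there is $\delta(w)\in\Gamma_K$ with $h(w,\gamma)\in\Gamma_K$ and $v(f_w(x))=h(w,v(x))$ for all $x$ with $v(x)<\delta(w)$, and to note that $\delta$ can be chosen definable in $w$ — but since we only assert the conclusion fiberwise (``for all $w\in W$, $f_w$ factorizes\dots through $h_w$''), even the definability of $\delta$ is not strictly needed: it suffices that for each fixed $w$ the argument of Theorem~\ref{thm:gamma-fact-infty} goes through.

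The key steps, in order, mirror the original proof with $w$ frozen. Step one: the analogue of Claim~\ref{claim:notinfty}. For fixed $w$, set $D^w_{\gamma,\mu}:=f_w^{-1}(v^{-1}(]-\infty,\mu]))\cap v^{-1}(\gamma)$ and $Y_w:=\{\gamma:\forall\mu\,(D^w_{\gamma,\mu}\neq\emptyset)\}$. Apply Remark~\ref{rmk:swiss} in families to the definable family $\{D^w_{\gamma,\mu}\}_{(w,\gamma,\mu)}$ to get, uniformly in $(w,\gamma)$, an integer $n$ (independent of the parameters, or at least bounded by part (4) of Theorem~\ref{lem:omin}) controlling the Swiss-cheese decomposition; then repeat verbatim the dichotomy argument showing that case (1) cannot hold coinitially (else $\bigcap_\mu D^w_{\gamma,\mu}\neq\emptyset$, impossible), hence case (2) holds coinitially, and finally that $Y_w$ must be finite by $C$-minimality applied to $\bigcup_{\gamma\in Y_w,\ \mu<\mu_\gamma}D^w_{\gamma,\mu}$ inside the fiber over $w$. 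Step two: with $h(w,-)$ now finite-valued below some $\delta_1(w)$, define $A^w_\gamma:=\{x:v(x)=\gamma\wedge v(f_w(x))=h(w,\gamma)\}$; the union $\bigcup_\gamma A^w_\gamma$ has elements of arbitrarily small valuation, so by $C$-minimality in the fiber over $w$ it contains some $K\setminus B(0,\gamma_0)$, and on that neighborhood of infinity $v(f_w(x))=h(w,v(x))=h_w(v(x))$, which is exactly factorization at infinity through $h_w$. Finally, take $Y:=\{(w,\gamma):h(w,\gamma)\in\Gamma_K\}\subseteq W\times\Gamma_K$ (or all of $W\times\Gamma_K$ with the convention that $h$ is extended arbitrarily where it is $-\infty$); this is definable and each $h_w$ is a $\Gamma$-factorization of $f_w$ at infinity.

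There is essentially no new mathematical obstacle: every lemma invoked in the proof of Theorem~\ref{thm:gamma-fact-infty} — o-minimality of $\Gamma_K$, the Swiss-cheese decomposition, and the finiteness consequences of $C$-minimality — is already known to hold uniformly in definable families (Theorem~\ref{lem:omin}, especially part (4), and Remark~\ref{rmk:swiss}). The one point requiring a line of care is that the integer $n$ bounding the number of balls in the Swiss-cheese decomposition of the $D^w_{\gamma,\mu}$ must be taken uniform over the relevant range of parameters; this is precisely what the family version of Remark~\ref{rmk:swiss} (together with part (4) of Theorem~\ref{lem:omin}, which prevents fiber cardinalities from being unbounded) provides, after possibly partitioning $W$ into finitely many definable pieces and arguing on each. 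With that in hand, the rest is a transcription of the unary proof, so I expect the write-up to be short.
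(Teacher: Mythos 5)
Your proposal is correct and matches the paper's intent exactly: the paper gives no separate argument for Theorem~\ref{thm:uniform-gamma-fact-infty}, merely remarking that the proof of Theorem~\ref{thm:gamma-fact-infty} is uniform in parameters, which is precisely the transcription you carry out. Your added care about the uniformity of the Swiss-cheese bound via Remark~\ref{rmk:swiss} and part (4) of Theorem~\ref{lem:omin} is the right (and only) point needing attention.
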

%

\begin{theorem}\label{thm:uniform-local-gamma-fact} Let $(K,\cL)$ be $C$-minimal and $f\colon X\subseteq W\times K\to K$ be a definable family of local $C$-isomorphisms. Then there are a definable family of functions $h\colon Y\subseteq X\times \Gamma_K\to \Gamma_K$ and a definable subset $F\subseteq X$ such that, for all $w\in W$ the set $F_w$ is finite, and $f_w|(X_w\setminus F_w)$ locally factorizes over $\Gamma$ through $h_{w}$. 
\end{theorem}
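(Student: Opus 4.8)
The plan is to re-run the proof of Theorem~\ref{thm:local-gamma-fact} carrying the parameter $w\in W$ along, using that each step there is an application of o-minimality of $\Gamma_K$, of $C$-minimality of $K$, or of part~(2) of Theorem~\ref{lem:omin}, all of which are available uniformly in definable families (for $C$-minimality via the uniform Swiss cheese decomposition of Remark~\ref{rmk:swiss}, and for finiteness of fibres via the uniform bound of part~(4) of Theorem~\ref{lem:omin}). First I would let $\gamma_0\colon X\to\Gamma_K\cup\{-\infty\}$ be the definable function such that $B^\circ(x,\gamma_0(w,x))$ is the maximal open ball on which $f_w$ restricts to a $C$-isomorphism, and then set $Y:=\{(w,x,\gamma)\in X\times\Gamma_K:\gamma>\gamma_0(w,x)\}$ and $h(w,x,\gamma):=v(f_w(x)-f_w(y))$ for some (equivalently any) $y$ with $v(x-y)=\gamma$; by the definition of $C$-isomorphism this is a well-defined definable family, whose sections I write as $h_{w,x}(\gamma):=h(w,x,\gamma)$.

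Next I would reproduce Claims~\ref{claim:1.1} and~\ref{claim:2} and the claim about the set $A$ fibrewise. For Claim~\ref{claim:1.1}, the disjoint definable sets $\{y\in B^\circ(x,\gamma_0(w,x)):[h_{w,x}]\mathbin{\square}[h_{w,y}]\}$, $\square\in\{=,<,>\}$, cover $B^\circ(x,\gamma_0(w,x))$ by o-minimality of $\Gamma_K$ applied in each fibre, and $C$-minimality applied in each fibre yields a minimal radius $\gamma_1(w,x)\geqslant\gamma_0(w,x)$ with $B^\circ(x,\gamma_1(w,x))\setminus\{x\}$ inside one of them; as the construction is first-order, $\gamma_1$ is a definable function of $(w,x)$. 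Writing $B_{w,x}:=B^\circ(x,\gamma_1(w,x))$, the exceptional set of Claim~\ref{claim:2},
\[
F^{(1)}:=\{(w,x)\in X:\ \exists\,y\in B_{w,x}\setminus\{x\}\ \text{ with }[h_{w,x}]\neq[h_{w,y}]\},
\]
is definable, and the open-ball argument of Claim~\ref{claim:2} run inside each fibre $X_w$ shows that $F^{(1)}_w$ is finite (by part~(4) of Theorem~\ref{lem:omin} its cardinality is even bounded uniformly in $w$). On $X\setminus F^{(1)}$ one defines $g_{w,x}\colon B_{w,x}\to\Gamma_K\cup\{-\infty\}$ exactly as before, uses part~(2) of Theorem~\ref{lem:omin} fibrewise to produce a definable radius $\gamma_3(w,x)$, sets
\[
A:=\{(w,x)\in X\setminus F^{(1)}:\ \neg\exists\,\gamma\geqslant\gamma_3(w,x)\ \forall\,y,z\in B^\circ(x,\gamma)\setminus\{x\}\ \ g_{w,x}(y)=g_{w,x}(z)\},
\]
and checks fibrewise, by the same open-ball argument, that each $A_w$ is finite.

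Finally, on $X\setminus(F^{(1)}\cup A)$ I would define $\gamma_4(w,x)$ and then $\gamma_5(w,x)$ by the formulas of the unary proof, put $B_{w,x}:=B^\circ(x,\gamma_5(w,x))$, and conclude that $f_w|B_{w,x}$ factorizes over $\Gamma$ through $h_{w,x}$; taking $F:=F^{(1)}\cup A$ gives the statement. The only real point of care — more a bookkeeping matter than a genuine obstacle — is to exhibit each exceptional set arising in the unary argument as an honest $\cL$-formula in $(w,x)$, so that its fibres are finite by the fibrewise argument, and to check that every radius obtained from an appeal to ``minimality'' or ``$C$-minimality'' is a genuine definable function of $(w,x)$; both hold because the constructions in the proof of Theorem~\ref{thm:local-gamma-fact} are first-order and uniform, and because the $C$-minimality inputs (Remark~\ref{rmk:swiss} and parts~(1),(2),(4) of Theorem~\ref{lem:omin}) are themselves available in families.
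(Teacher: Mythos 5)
Your proposal is correct and is exactly what the paper does: the paper gives no separate argument for Theorem~\ref{thm:uniform-local-gamma-fact} beyond remarking that the proof of Theorem~\ref{thm:local-gamma-fact} is uniform in parameters, and you have carried out that check step by step, verifying that each radius and each exceptional set is definable in $(w,x)$ and that the finiteness arguments run fibrewise. Nothing further is needed.
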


%
%
%

\section{Polynomially bounded $C$-minimal valued fields}

Let us start by defining the main concepts of this section:

\begin{definition}\label{def:linbdd} Let $(R,<,+)$ be a divisible ordered abelian group definable inside some structure $(M,\cL)$. The structure $(R,\cL_{ind})$ is said to be \emph{linearly bounded} if for every $\cL_{ind}$-definable function $f\colon R\to R$ there are a definable endomorphism $\lambda\in End(R,<,+)$ and $a\in R$ such that $|f(x)|<|\lambda(x)|$ for all $x$ such that $|x|>|a|$. It is said to be \emph{$\cL$-uniformly linearly bounded} if for every $\cL$-definable set $W$ and every $\cL$-definable family of functions $f\colon W\times R\to R$, there are an $\cL$-definable $\lambda\in End(R,<,+)$ and an $\cL$-definable function $a\colon W\to R$ such that, for all $w\in W$, $|f_w(x)|<|\lambda(x)|$ for all $x>a(w)$. It is $\Q$-linearly bounded (resp. $\cL$-uniformly $\Q$-linearly bounded) if in addition $\lambda$ can be chosen to be $\lambda(x)=nx$ for some integer $n$.   
\end{definition}

\begin{definition}\label{def:polybdd} An expansion $(K,\cL)$ of $(K,\cL_{div})$ is said to be \emph{uniformly polynomially bounded}, if for every definable set $W$ and definable family of functions $f\colon X\subseteq W\times K\rightarrow K$, there is an integer $n$ and a definable function $a\colon W\to \Gamma_K$ such that for each $w\in W$, $v(f_w(x)) > n v(x)$ for all $x\in X$ such that $v(x)<a(w)$.  
\end{definition}

We will need the following dichotomy due to Miller and Starchenko in o-minimal expansions of ordered groups. 

\begin{theorem}[{Miller-Starchenko \cite[Theorems A and B]{miller-starchenko98}}]\label{thm:miller} Suppose that $(R,\cL)$ is an o-minimal expansion of an ordered group $(R, <, +)$. Then exactly one of the following holds: 
\begin{enumerate}[(a)]
\item $(R,\cL)$ defines a binary operation $\cdot$ such that $(R, <, +, \cdot)$ is an ordered real closed field or 
\item for every definable $\alpha \colon  R \to R$ there exist $c\in R$ and a definable $\lambda \in \{0\} \cup Aut(R, +)$ with $\lim_{x\to +\infty} [\alpha(x) -\lambda(x)] = c$.
\end{enumerate} 
If (b) holds and there is a distinguished $\emptyset$-definable element $1>0$ then every definable endomorphism of $(R,+)$ is $\emptyset$-definable. 
\end{theorem}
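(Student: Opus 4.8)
Since this is \cite[Theorems A and B]{miller-starchenko98}, no new argument is needed here; for completeness I outline how one would recover the statement. The plan is to deduce the dichotomy from the Peterzil--Starchenko trichotomy, which asserts that in an o-minimal structure each element of the universe is \emph{trivial}, \emph{linear} or \emph{field-like}. First I would observe that, since $(R,\cL)$ expands an ordered group, no element is trivial: the translations $x\mapsto x+a$ are definable bijections mapping any interval onto any other, which is incompatible with triviality at a point. Hence every point is linear or field-like, and I would split according to whether some point is field-like.

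Suppose first that every point is linear. Then $(R,\cL)$ is a linear o-minimal structure and, by the structure theory of such structures (Loveys--Peterzil), it is a reduct of an ordered vector space over an ordered division ring $D$; in particular every definable $\alpha\colon R\to R$ is, on a finite partition of $R$ into points and intervals, of the form $x\mapsto\lambda(x)+b$, where $\lambda$ is a definable endomorphism of $(R,+)$ --- scalar multiplication by some $d\in D$, which is $0$ if $d=0$ and an automorphism of $(R,+)$ otherwise --- and $b\in R$. By the monotonicity theorem $\alpha$ is eventually monotone and continuous, so on its last, unbounded piece it is exactly of this affine form; hence $\lim_{x\to+\infty}[\alpha(x)-\lambda(x)]=b$, which is alternative (b). The last sentence of the theorem --- that when a distinguished positive $1$ is present every definable endomorphism of $(R,+)$ is $\emptyset$-definable --- is likewise part of the structure theory of linear o-minimal structures, where the division ring of parameter-definable endomorphisms coincides with the $\emptyset$-definable one.

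Suppose now that some point is field-like, so that a real closed field $(J,\oplus,\otimes)$ is definable on an open interval $J$. The step I expect to be the crux is to upgrade this \emph{local} field to a definable real closed field whose underlying ordered group is $(R,<,+)$: using the group translations and divisibility one rescales $J$ definably onto arbitrarily large bounded intervals, checks that the resulting local field structures are compatible, glues them, and shows the result can be arranged so that its addition is the ambient $+$. This gives alternative (a). Finally the two alternatives are mutually exclusive: under (a) the definable map $x\mapsto x\otimes x$ satisfies $(x\otimes x)-\lambda(x)\to+\infty$ for every definable endomorphism $\lambda$ of $(R,+)$, so (b) fails, while the case split shows one of them always holds.
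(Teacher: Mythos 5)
This statement is quoted verbatim from Miller--Starchenko and the paper offers no proof of it, so there is nothing internal to compare your argument against; your decision to treat it as an imported result and only sketch a derivation ``for completeness'' is exactly what the paper does. As for the sketch itself: it follows a genuinely different route from Miller--Starchenko's original argument. They work directly with growth rates, splitting on whether the structure is linearly bounded and, in the unbounded case, manufacturing a multiplication from a super-linear definable function; you instead invoke the Peterzil--Starchenko trichotomy plus the Loveys--Peterzil structure theory of linear o-minimal structures. That route is viable and is essentially how one would present the dichotomy with post-1998 technology, but be aware that the two steps you wave at are precisely where the content lives. First, in the linear case, Loveys--Peterzil a priori gives that definable functions are piecewise of the form $\lambda(x)+b$ with $\lambda$ a \emph{partial} $\emptyset$-definable endomorphism (defined on a bounded interval); upgrading the $\lambda$ governing the unbounded last piece to a total element of $\{0\}\cup Aut(R,+)$, and deducing Theorem~B's $\emptyset$-definability of \emph{all} definable endomorphisms, both require arguments you have only asserted. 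Second, the globalization step --- passing from a real closed field on a subinterval to one on all of $R$ whose addition and order are the ambient $+$ and $<$ --- is the crux of Theorem~A and is not obtained merely by ``rescaling and gluing''; this is where Miller--Starchenko's growth analysis actually does the work. Your mutual-exclusivity argument (that $x\otimes x$ outgrows every $\lambda(x)+c$, using that under (a) definable additive maps are field-linear) is fine. In short: acceptable as a citation with an orienting sketch, but the sketch should not be mistaken for a proof.
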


\begin{remark}\label{rmk:miller-lin-bdd} Note that condition (b) implies that the structure $(R,\cL)$ is linearly bounded. Indeed one may always suppose without loss of generality that there is a distinguished $\emptyset$-definable element $1>0$.
\end{remark}

\begin{corollary}\label{cor:Qlinearly} Every o-minimal expansion of $(\Q,<,+,0,1)$ is $\Q$-linearly bounded. 
\end{corollary}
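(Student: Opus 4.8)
The goal is to prove Corollary~\ref{cor:Qlinearly}: every o-minimal expansion of $(\Q,<,+,0,1)$ is $\Q$-linearly bounded. The plan is to invoke the Miller--Starchenko dichotomy (Theorem~\ref{thm:miller}) together with the elementary fact that $(\Q,<,+)$ does not admit an expansion to an ordered real closed field, and then to analyze which endomorphisms of $(\Q,+)$ are definable.

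First I would rule out case (a) of Theorem~\ref{thm:miller}. If $(R,\cL)$ with $R=\Q$ defined a multiplication making $(\Q,<,+,\cdot)$ an ordered real closed field, then in particular $\Q$ would be a real closed field; but a real closed field is closed under taking square roots of positive elements, and $\sqrt 2\notin\Q$, a contradiction. Hence case (b) holds: for every definable $\alpha\colon\Q\to\Q$ there are $c\in\Q$ and a definable $\lambda\in\{0\}\cup\mathrm{Aut}(\Q,+)$ with $\lim_{x\to+\infty}[\alpha(x)-\lambda(x)]=c$. In particular, as noted in Remark~\ref{rmk:miller-lin-bdd}, $(\Q,\cL)$ is linearly bounded: $|\alpha(x)|<|\lambda'(x)|$ eventually for a suitable definable endomorphism $\lambda'$ (e.g. replace $\lambda$ by $2\lambda$, or by the identity if $\lambda=0$).

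It remains to upgrade ``linearly bounded'' to ``$\Q$-linearly bounded'', i.e. to check that the bounding endomorphism can be taken of the form $x\mapsto nx$ for an integer $n$. Here I would use the last sentence of Theorem~\ref{thm:miller}: since $1>0$ is a distinguished $\emptyset$-definable element of $\Q$, every definable endomorphism of $(\Q,+)$ is $\emptyset$-definable. Now I claim that the only $\emptyset$-definable endomorphisms of $(\Q,+)$ are the maps $x\mapsto qx$ for $q\in\Q$ — indeed, an endomorphism of $(\Q,+)$ as an abelian group is already determined by its value on $1$, and it is $\Q$-linear, so it has this form; any such map is automatically definable over $\emptyset$ using $+$ and $1$, and conversely there are no others. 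Given an endomorphism $\lambda$ with $\lambda(x)=(a/b)x$, $a,b\in\Z$, $b>0$, we have $|\lambda(x)|\le|a|\cdot|x|$ for all $x$, so $|\alpha(x)|<|\lambda(x)|\le n|x|$ eventually with $n=|a|$ (taking $n=1$ if $\lambda=0$); thus the bound may be taken with an integer multiple, as required. The same argument applies uniformly in a parameter $w$ ranging over a definable $W$ — o-minimality gives that the relevant threshold is a definable function of $w$, and $\lambda$ can be taken independent of $w$ after refining into finitely many pieces on which the integer $n$ is constant (using that there are only finitely many endomorphisms that arise, or a standard compactness/definability argument), yielding the $\cL$-uniform version.

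The main obstacle is purely bookkeeping: Theorem~\ref{thm:miller} as quoted gives case~(b) in the asymptotic form $\lim[\alpha-\lambda]=c$ rather than directly as a bound, so one must pass to the bound (harmless, via $\lambda'=2\lambda$ or the identity), and then one must argue that the finitely many possible ``slopes'' appearing across a definable family can be absorbed into a single integer $n$; none of this is deep, but it is the only place care is needed. Everything else follows immediately from the dichotomy and the triviality that $\Q$ is not real closed.
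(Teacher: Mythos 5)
Your proof is correct and follows essentially the same route as the paper: rule out case (a) of the Miller--Starchenko dichotomy because $(\Q,+)$ cannot carry a real closed field structure, use Remark \ref{rmk:miller-lin-bdd} to pass to linear boundedness, and observe that every (definable) endomorphism of $(\Q,+)$ has the form $x\mapsto qx$ with $q\in\Q$, hence is dominated by an integer multiple of $x$. The only nitpick is that in ruling out case (a) the hypothetical multiplication need not be the standard one, so ``$\sqrt2\notin\Q$'' requires the extra (one-line) remark that a field whose additive group is the one-dimensional $\Q$-vector space $(\Q,+)$ coincides with its prime field and is therefore isomorphic to $\Q$, which is indeed not real closed.
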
 

\begin{proof} Since multiplication is not definable in any o-minimal expansion of $(\Q,<,+,0,1)$, by Theorem \ref{thm:miller} (and the previous remark), for any definable $\alpha\colon\Q\to\Q$, there exist a $\emptyset$-definable $\lambda \in \{0\} \cup Aut(\Q, +)$ and $a\in \Q$ satisfying that $|\alpha(x)|<|\lambda(x)|$ for all $x$ such that $|x|>|a|$. The result follows from the fact that every automorphism of $(\Q, +)$ is of the form $qx$ for $q\in \Q$.  
\end{proof}

\begin{theorem}\label{linear-to-uniform} Let $R$ be a definable (or interpretable) and stably embedded set in a structure $(M,\cL)$. Suppose moreover that $(R,\cL_{ind})$ is an o-minimal expansion of an ordered divisible group $(R,<,+,0,1)$. Then $(R,\cL_{ind})$ is linearly bounded if and only if is $\cL$-uniformly linearly bounded. Moreover, it is $\Q$-linearly bounded if and only if it is $\cL$-uniformly $\Q$-linearly bounded.  
\end{theorem}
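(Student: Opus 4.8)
\textbf{Proof plan for Theorem \ref{linear-to-uniform}.}

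The plan is to prove the contrapositive of the non-trivial direction: assuming $(R,\cL_{ind})$ is linearly bounded, derive $\cL$-uniform linear boundedness. The reverse implication is trivial (take $W$ a singleton). Since $R$ is stably embedded and $(R,\cL_{ind})$ is o-minimal, I first reduce from an arbitrary $\cL$-definable family $f\colon W\times R\to R$ to a family that is ``internal to $R$''. Concretely, for a fixed $w\in W$, the fiber $f_w$ is an $\cL$-definable function $R\to R$; by stable embeddedness it is $\cL_{ind}$-definable with parameters from $R$, say with a tuple $\bar r(w)\in R^{k}$ of parameters. The key point is that this can be done \emph{uniformly}: by compactness there is a single $\cL_{ind}$-formula $\varphi(x,y,\bar z)$ (over $\emptyset$, possibly after a finite partition of $W$ absorbed into the $\cL$-side) and an $\cL$-definable function $w\mapsto \bar r(w)\in R^{k}$ such that $f_w$ is the function defined by $\varphi(x,y,\bar r(w))$. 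This replaces the external parameter space $W$ by the internal parameter space $R^{k}$ and reduces the statement to: every $\cL_{ind}$-definable family of functions $g\colon R^{k}\times R\to R$ admits a uniform linear bound, purely inside the o-minimal structure $(R,\cL_{ind})$.

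Next I work entirely inside the o-minimal structure $(R,\cL_{ind})$ with the family $g(\bar z,x)$, where $\bar z$ ranges over $R^k$. Here I would invoke the Miller--Starchenko dichotomy (Theorem \ref{thm:miller}): since $(R,\cL_{ind})$ is linearly bounded, it does not define a real closed field structure, so alternative (b) holds, and moreover (using the distinguished element $1$) every definable endomorphism of $(R,+)$ is $\emptyset$-definable. The set $\mathrm{End}(R,+)$ of $\emptyset$-definable endomorphisms is therefore a fixed countable family $\{\lambda_i : i\in\N\}$. For each $\bar z$, linear boundedness of $g_{\bar z}$ gives some $\lambda_i$ and some $a\in R$ with $|g_{\bar z}(x)|<|\lambda_i(x)|$ for $|x|>|a|$. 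Partition $R^{k}$ into the definable sets $Z_i := \{\bar z : |g_{\bar z}(x)|<|\lambda_i(x)|$ eventually$\}$, $i\in\N$; by o-minimality plus a standard uniform-finiteness/compactness argument these sets cover $R^{k}$ and only finitely many are nonempty, so one can choose a \emph{single} $\lambda := \lambda_{i_0}$ dominating \emph{all} the $\lambda_i$ that actually occur (in $\mathrm{End}(R,+)$ any finite set has a common upper bound for eventual domination of absolute values, since $\lambda(x)=q x$ in the $\Q$-case and in general one may compose/add). Then $|g_{\bar z}(x)|<|\lambda(x)|$ for all $\bar z$ outside a lower-dimensional set, and one patches in the exceptional lower-dimensional locus by induction on $\dim \bar z$. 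Finally, the threshold: define $a(\bar z):=\inf\{t : \forall x\,(|x|>|t|\to |g_{\bar z}(x)|<|\lambda(x)|)\}$; this is $\cL_{ind}$-definable by o-minimality, hence the desired $\cL$-definable $a\colon W\to R$ after composing with $w\mapsto\bar r(w)$. In the $\Q$-linearly bounded case, each $\lambda_i$ has the form $x\mapsto n_i x$ and the common dominating endomorphism can likewise be taken of the form $x\mapsto n x$, giving the ``moreover'' clause with no extra work.

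The main obstacle I anticipate is the \emph{uniformity in the reduction step and in the choice of a single dominating endomorphism}. Getting a uniform $\cL_{ind}$-formula defining $f_w$ across all $w$ — rather than merely knowing each fiber is $\cL_{ind}$-definable — requires a compactness argument combined with a finite partition of $W$; one must check this partition can be refined finitely often without breaking definability, which is where stable embeddedness (Proposition \ref{prop:stably}) is used in its full uniform strength. Likewise, passing from ``for each $\bar z$ \emph{some} $\lambda_i$ works'' to ``\emph{one} $\lambda$ works for all $\bar z$'' needs the dichotomy's rigidity (only $\emptyset$-definable endomorphisms, hence a fixed countable list) together with an o-minimal uniform-finiteness argument to bound which $\lambda_i$ actually occur; without the linear-boundedness hypothesis forcing alternative (b), this step fails completely. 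Everything else — the definability of the threshold function $a$, the handling of the exceptional locus by dimension induction, and the specialization to the $\Q$-case — is routine o-minimal bookkeeping.
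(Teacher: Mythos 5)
Your proposal follows essentially the same route as the paper's proof: stable embeddedness to make each fiber $f_w$ an $\cL_{ind}$-definable function of parameters from $R$, the Miller--Starchenko dichotomy to conclude that every $\cL_{ind}$-definable endomorphism of $(R,+)$ is $\emptyset$-definable (so the candidate linear bounds form a fixed set of bounded cardinality), and compactness to extract a single dominating $\lambda$. The extra machinery you add --- the uniform internalizing formula $\varphi(x,y,\bar z)$ with an $\cL$-definable parameter map, and the dimension induction on an exceptional locus --- is harmless but not needed: once one $\lambda$ dominates all the $\lambda_i$ occurring in a finite subcover, there is no exceptional locus left to patch.

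The one step where your justification is genuinely too thin is the compactness argument itself. If no finite subfamily of your sets $Z_i$ covers the parameter space, the type expressing ``$\bar z$ lies in no $Z_i$'' is realized only in a proper elementary extension of $(R,\cL_{ind})$, and to reach a contradiction you must know that this extension is still linearly bounded, with the \emph{same} $\emptyset$-definable endomorphisms as candidates. That is not automatic from o-minimality: the paper secures it by noting that, by Miller--Starchenko, linear boundedness is equivalent to the non-definability of a field expanding $(R,+)$, a condition expressible by an $\cL_{ind}$-axiom scheme and hence preserved under elementary equivalence. Your sketch invokes ``a standard uniform-finiteness/compactness argument'' without this transfer; as written it only shows that each $\bar z$ in the original model lies in some $Z_i$, which does not by itself produce a finite subcover. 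Inserting the observation that linear boundedness is an elementary property closes the gap, and the $\Q$-linear case then goes through exactly as you describe.
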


\begin{proof} One direction is trivial, so suppose $(R,\cL_{ind})$ is linearly bounded and let $f\colon W\times R\to R$ be an $\cL$-definable family of functions. By Theorem \ref{thm:miller}, the two following conditions are equivalent 
\begin{enumerate}
\item $(R,\cL_{ind})$ is linearly bounded;
\item no field expanding $(R,+)$ is $\cL_{ind}$-definable.
\end{enumerate}
Therefore, the assumption implies that no field expanding $(R,+)$ is $\cL_{ind}$-definable. Since this last condition is expressible by an $\cL_{ind}$-axiom scheme, Theorem \ref{thm:miller} implies that every $\cL_{ind}$-elementary equivalent structure is linearly bounded. Furthermore, by Theorem \ref{thm:miller}, every $\cL_{ind}$-definable endomorphism of $(R,+)$ is $\emptyset$-definable. Let $\Lambda$ be the ring of all $\cL_{ind}$-definable endomorphisms of $(R,<,+)$. By stable embeddedness, for every $w\in W$, the function $f_w\colon R\to R$ is $\cL_{ind}$-definable. Our assumption implies there are a $\emptyset$-definable endomorphism $\lambda_w\in\Lambda$ and $a_w\in R$ such that $|f_w(x)|<|\lambda(x)|$ for all $x$ such that $|x|>|a_w|$ (see Remark \ref{rmk:miller-lin-bdd}). Since $\Lambda$ has bounded cardinality, by compactness and the fact that every $\cL_{ind}$-elementary equivalent structure is linearly bounded,  there is $\lambda\in\Lambda$ such that 
\[
(\forall w\in W)(\exists a_w\in R)(\forall x)(|x|> |a_w|\to |f_w(x)| < |\lambda(x)|, 
\]
which shows that $(R,\cL_{ind})$ is $\cL$-uniformly bounded. 

If furthermore $\Gamma_K$ is $\mathbb Q$-linearly bounded, then every $\lambda\in \Lambda$ is bounded by a function of the form $\lambda(x)=nx$ for $n\in \N$. Therefore $(R,\cL_{ind})$ is $\cL$-uniformly $\Q$-linearly bounded. 
\end{proof} 

\begin{theorem}\label{thm:germs} Suppose $(K,\cL)$ is a $C$-minimal valued field such that $(\Gamma_K,\cL_{ind})$ is $\mathbb Q$-linearly bounded. Then $(K,\cL)$ is uniformly polynomially bounded.  
\end{theorem}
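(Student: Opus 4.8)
The plan is to combine the type (I) $\Gamma$-factorization (Theorem \ref{thm:gamma-fact-infty}) with the uniform $\Q$-linear boundedness of $(\Gamma_K,\cL_{ind})$ furnished by Theorem \ref{linear-to-uniform}. First I would reduce to a single definable function: given a definable family $f\colon X\subseteq W\times K\to K$, I want an integer $n$ and a definable $a\colon W\to\Gamma_K$ with $v(f_w(x))>nv(x)$ for $v(x)<a(w)$. The only subtlety in the reduction is that $f_w$ may take the value $0$; but by $C$-minimality the fiber $f_w^{-1}(0)$ is a finite union of balls, and if $0\notin\overline{f_w^{-1}(0)}$ — i.e. if $f_w$ is not eventually zero near infinity — then after shrinking via a definable $a_0(w)$ we may assume $f_w\colon X_w\setminus B(0,a_0(w))\to K^\times$; the case where $f_w$ is eventually $0$ near infinity is trivially handled since then $v(f_w(x))=+\infty>nv(x)$. (Uniform finiteness, part (4) of Theorem \ref{lem:omin}, and Remark \ref{rmk:swiss} let one carve $W$ into definably many pieces so this dichotomy is uniform.)

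Next I would apply Theorem \ref{thm:gamma-fact-infty} — in its uniform form, Theorem \ref{thm:uniform-gamma-fact-infty} — to obtain a definable family $h\colon Y\subseteq W\times\Gamma_K\to\Gamma_K$ and a definable $\gamma_0\colon W\to\Gamma_K$ such that $v(f_w(x))=h_w(v(x))$ for all $x\in X_w$ with $v(x)<\gamma_0(w)$. Now the problem has been transported entirely into the o-minimal structure $(\Gamma_K,\cL_{ind})$: I need to bound the family $h_w$ near $-\infty$ (which is the relevant ``infinity'' here, since neighborhoods of infinity in $K$ correspond to $v(x)$ small, i.e. very negative). Composing with $\gamma\mapsto-\gamma$ turns this into a bound near $+\infty$ for a definable family of unary $\Gamma$-functions. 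By Proposition \ref{prop:stably}, $\Gamma_K$ is stably embedded, and by part (1) of Theorem \ref{lem:omin} it is o-minimal; by hypothesis $(\Gamma_K,\cL_{ind})$ is $\Q$-linearly bounded. Theorem \ref{linear-to-uniform} then upgrades this to $\cL$-uniform $\Q$-linear boundedness: there is a single integer $n$ and a definable $b\colon W\to\Gamma_K$ with $|h_w(\gamma)|<|n\gamma|$ for all $\gamma$ with $|\gamma|>|b(w)|$.

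Unwinding, for $v(x)$ sufficiently negative (below $\min(\gamma_0(w),-|b(w)|)$, which is a definable function of $w$), we get $v(f_w(x))=h_w(v(x))$ with $|h_w(v(x))|<|n\,v(x)|=-n\,v(x)$ (as $v(x)<0$ and $n\ge 0$, say $n\ge 1$ after replacing $n$ by $\max(n,1)$). Hence $v(f_w(x))>n\,v(x)$, which is exactly the required polynomial bound with the definable $a(w):=\min(\gamma_0(w),-|b(w)|,a_0(w))$. I expect the main obstacle to be the bookkeeping around the value $0$ and making the dichotomy (``$f_w$ eventually zero near infinity or not'') uniformly definable in $w$ — this is where one must invoke uniform finiteness of fibers and the family version of the Swiss-cheese decomposition to split $W$ into definably many pieces; the rest is essentially a clean transfer of an o-minimal fact through the $\Gamma$-factorization. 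A minor point to be careful about is the sign conventions: linear boundedness over $(\Gamma_K,<,+)$ is stated near $+\infty$ whereas ``neighborhood of infinity'' in $K$ sits near $-\infty$ in $\Gamma_K$, so one applies the result to $\gamma\mapsto h_w(-\gamma)$, or equivalently notes that the notion of linear boundedness is symmetric under $\gamma\mapsto-\gamma$.
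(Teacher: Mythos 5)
Your proposal is correct and follows essentially the same route as the paper: apply the uniform type (I) $\Gamma$-factorization (Theorem \ref{thm:uniform-gamma-fact-infty}) and then transfer the $\Q$-linear bound on the resulting family $h$ through Theorem \ref{linear-to-uniform}, using stable embeddedness of $\Gamma_K$. The extra bookkeeping you supply (handling zeros of $f_w$ and the sign convention near $-\infty$ in $\Gamma_K$) is exactly what the paper silently elides by starting from a $K^\times$-valued family defined at a neighborhood of infinity.
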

\begin{proof} 
Let $f\colon X\subseteq W\times K\rightarrow K^\times$ be a definable family of functions defined at a neighborhood of infinity. By Theorem \ref{thm:uniform-gamma-fact-infty}, let $h\colon Y\subseteq W\times \Gamma_K\to \Gamma_K$ be a definable $\Gamma$-factorization of $f$ at infinity, that is, for every $w\in W$, $f_w$ factorizes at infinity over $\Gamma$ through $h_w$. It suffices to show that $h$ is $\cL$-uniformly $\Q$-linearly bounded at infinity. Since $C$-minimality is preserved by addition of constants, we may assume there is a $\emptyset$-definable element $1>0$ in $\Gamma_K$. The result now follows by Theorem \ref{linear-to-uniform} taking $(M,\cL)=(K,\cL)$ and $R=\Gamma_K$.

\end{proof}

Using Corollary \ref{cor:Qlinearly} and Theorem \ref{thm:germs}, we obtain the following: 

\begin{theorem}\label{main} Let $(K,\cL)$ be a $C$-minimal valued field with $\Gamma_K=\Q$. Then $(K,\cL)$ is uniformly polynomially bounded. In particular, any $C$-minimal expansion of $\C_p$ or $\overline{\mathbb{F}_p}^{\mathrm{alg}}(\!(t^\Q)\!)$ is polynomially bounded. More generally, any $C$-minimal valued field $(K,\cL)$ which is $\cL$-elementary equivalent to a valued field having as value group $\Q$, is uniformly polynomially bounded. 
\end{theorem}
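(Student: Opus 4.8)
The plan is to reduce Theorem \ref{main} to Theorem \ref{thm:germs}, whose hypothesis is that $(\Gamma_K,\cL_{ind})$ is $\Q$-linearly bounded, by showing that this hypothesis holds whenever $\Gamma_K=\Q$ (or more generally whenever $\Gamma_K$ is elementarily equivalent, as a valued-group structure inside $(K,\cL)$, to $\Q$). First I would invoke part (1) of Theorem \ref{lem:omin} to record that $(\Gamma_K,\cL_{ind})$ is an o-minimal expansion of a divisible ordered abelian group; since $\Q$ is divisible this expansion is of the ordered group $(\Q,<,+,0)$, and after naming a positive element (which is harmless, as $C$-minimality — and hence $o$-minimality of the induced structure — is preserved under adding constants) we may regard it as an o-minimal expansion of $(\Q,<,+,0,1)$. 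Then Corollary \ref{cor:Qlinearly} applies verbatim and gives that $(\Gamma_K,\cL_{ind})$ is $\Q$-linearly bounded. Feeding this into Theorem \ref{thm:germs} yields that $(K,\cL)$ is uniformly polynomially bounded, which is the first assertion.

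For the ``in particular'' clause, I would simply note that $\C_p$ and $\overline{\mathbb{F}_p}^{\mathrm{alg}}(\!(t^\Q)\!)$ both have value group $\Q$, so any $C$-minimal expansion of either falls under the first assertion and is in particular (non-uniformly) polynomially bounded. For the final ``more generally'' clause, the point is that polynomial boundedness transfers along elementary equivalence. So suppose $(K,\cL)$ is $\cL$-elementarily equivalent to some $(K',\cL)$ with $\Gamma_{K'}=\Q$; by the already-proved case $(K',\cL)$ is uniformly polynomially bounded. I would then argue that uniform polynomial boundedness is expressible by a (countable) scheme of first-order $\cL$-sentences: for each $\cL$-formula $\varphi(w,x,y)$ defining a family of functions $f\colon X\subseteq W\times K\to K$, the existence of an integer $n$ and a definable $a\colon W\to\Gamma_K$ with $v(f_w(x))>nv(x)$ for $v(x)<a(w)$ can be phrased as an infinite disjunction over $n$ of $\cL$-sentences (the definability of $a$ being witnessed by a formula, or one simply asserts $(\forall w)(\exists \gamma)(\forall x)(v(x)<\gamma\to v(f_w(x))>nv(x))$, which is a single $\cL$-sentence for each pair $(\varphi,n)$). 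Hence uniform polynomial boundedness of $(K',\cL)$ transfers to $(K,\cL)$.

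The main obstacle — really the only subtle point — is the last transfer step: one must check that ``uniformly polynomially bounded'' is genuinely a first-order property in the language $\cL$, given that its statement quantifies over definable families and over a definable function $a\colon W\to\Gamma_K$. This is handled by working formula-by-formula: fixing the defining $\cL$-formula $\varphi$ of the family $f$, the clause ``there exists an integer $n$ such that $(\forall w\in W)(\exists\gamma\in\Gamma_K)(\forall x\in X_w)\,[\,v(x)<\gamma\rightarrow v(f_w(x))>nv(x)\,]$'' becomes, for each fixed $n$, a single first-order $\cL$-sentence (note $v(f_w(x))>nv(x)$ is $\cL_{div}$-expressible, e.g.\ via $\mathrm{div}$ applied to $x^n$ and $f_w(x)$, since $n$ is a concrete integer), and quantifying existentially over $n\in\N$ produces a countable disjunction whose truth is preserved under elementary equivalence. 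Since there are only countably many $\cL$-formulas $\varphi$, the conjunction over all of them is a first-order $\cL$-theory, and $(K,\cL)\equiv(K',\cL)$ then gives the conclusion. One slightly delicate variant is that ``definable function $a$'' should itself be allowed to depend on the family; this causes no problem because the Skolem-type witness $\gamma$ for each $w$ can simply be extracted by $(\exists\gamma)$ inside the sentence, so no separate definability clause for $a$ is actually needed.
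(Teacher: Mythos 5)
Your proof is correct and follows the paper's route exactly: the paper derives Theorem \ref{main} in one line by combining Corollary \ref{cor:Qlinearly} with Theorem \ref{thm:germs}, which is precisely your first paragraph. Your transfer argument for the final ``more generally'' clause, which the paper leaves implicit, is also sound, since for each defining $\cL$-formula and each integer $n$ the uniform bound is a single $\cL$-sentence, truth of the resulting countable disjunction is preserved under elementary equivalence, and the required definable function $a\colon W\to\Gamma_K$ can then be recovered from the Skolem witnesses by o-minimality of $(\Gamma_K,\cL_{ind})$.
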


We end up this section by giving an example of a polynomially bounded $C$-minimal valued field in which definable $\Gamma$-functions are not $\Q$-linearly bounded.  

\begin{example} Consider the algebraically closed valued field $K=\C(\!(t^{\R})\!)$ having value group $\R$. Consider the two-sorted structure $(\mathcal{K},\cL)$
\[
\mathcal{K}:=
\begin{cases}
(K,\Lring)  \\
(\R, \cL_{or})\\
v\colon K\to \R\cup\{\infty\}\\
\end{cases}
\]
where $\cL_{or}:=\Lring\cup\{<\}$. The theory of $\mathcal{K}$ has elimination of quantifiers (one follows the same argument as for the reduct in which the value group has only the ordered abelian group structure, see \cite{haskell-hrushovski-macpherson2008}). Since $(\R, \cL_{or})$ is o-minimal, it is easy to see that definable sets in one valued field variable can only be finite boolean combinations of balls (in any model of $Th(\mathcal{K})$). Consider the one sorted language $\cL^*$ containing $\cL_{div}$ and a relation symbol for every $\cL$-definable subset of $K^n$ (for all $n$). Interpreting the language in the obvious way, we have that $(K,\cL^*)$ is $C$-minimal. Clearly, there are definable unary $\Gamma$-functions which are not linearly bounded since we have multiplication in the value group. Nevertheless, it is worthy to note that $(K,\cL^*)$ is polynomially bounded. Indeed, by quantifier elimination, every $\cL^*$-definable function $f\colon K\to K$ is already $\cL_{div}$-definable. Therefore, although the fact of having all definable $\Gamma$-functions to be $\Q$-linearly bounded is a sufficient condition for a $C$-minimal valued field to be polynomially bounded, the example shows it is not a necessary condition. We finish this section with the following natural question: 
\end{example}

\begin{question} Is every $C$-minimal valued field polynomially bounded? 
\end{question}

\section{$\RV$-factorization}

Let us start by defining what eventually linear means. By a unary $\Gamma$-function in $K$ we simply mean a function $g\colon \Gamma_K\to \Gamma_K$. 

\begin{definition}\label{def:eventually} Let $g\colon \Gamma_K\to \Gamma_K$ be a unary $\Gamma$-function. 
\begin{enumerate}
\item We say $g$ is \emph{eventually linear} if there are $\delta,\alpha\in \Gamma_K$ and an endomorphism $\lambda$ of $(\Gamma_K,+)$ such that $g(\gamma)= \lambda(\gamma)+\alpha$ for all $\gamma\in \Gamma_K^{>\delta}$. 
\item We say $g$ is \emph{eventually $\Q$-linear} if in the previous definition, $\lambda$ can be taken of the form $\lambda(\gamma)= r\gamma$ for $r\in \Q$. 
\item Given a definable set of parameters $W$, a definable family of functions $f\colon W\times \Gamma_K\to\Gamma_K$ is \emph{eventually $\Q$-linear} if there is a finite subset $Z_f\subseteq\mathbb{Q}$ such that for every $w\in W$, the function $f_w$ is eventually $\Q$-linear with slope $r\in Z_f$.
\end{enumerate}
  \end{definition}

In algebraically closed valued fields, every $\cL_{div}$-definable $\Gamma$-function is already definable in the language of ordered groups $\cL_{og}:=\{<,+,0\}$ (see for instance \cite[Theorem 2.1.1]{haskell-hrushovski-macpherson2008}). It follows, using quantifier elimination of divisible ordered abelian groups, that unary $\cL_{div}$-definable $\Gamma$-functions are eventually $\Q$-linear. The same holds for $C$-minimal expansions with analytic structure as studied in \cite{lipshitzETAL:98}, as the induced structure on $\Gamma_K$ is again the pure structure of an ordered abelian group.  

\begin{remark}\label{rmk:even-comp} Let $g\colon \Gamma_K\to \Gamma_K$ be a definable function. It follows from the assumption that unary definable $\Gamma$-functions are eventually linear, that there are $\delta',\alpha'\in \Gamma_K$ and $\lambda$ in $End(\Gamma_K,+)$ such that for all $\gamma\in \Gamma_K^{<\delta'}$ 
\[g(\gamma)= \lambda(\gamma)+\alpha'.\]
\end{remark}

Given a function $f\colon X\subseteq K\to K$, we adopt the convention concerning limits in $K\cup\{\infty\}$: 
\begin{enumerate}
\item[$\bullet$] for $a\in K$, $\displaystyle{\lim_{x\to a} f(x)=\infty}$ holds if for every $\gamma\in\Gamma_K$, there is an open ball $B$ containing $a$ such that $v(f(B))<\gamma$;
\item[$\bullet$] for $b\in K$, $\displaystyle{\lim_{x\to \infty} f(x)=b}$ holds if for every $\gamma\in\Gamma_K$, there is $\delta\in \Gamma_K$ such that $v(f(x)-b)>\gamma$ for all $x$ for which $v(x)<\delta$; 
\item[$\bullet$] $\displaystyle{\lim_{x\to \infty} f(x)=\infty}$ holds if for every $\gamma\in\Gamma_K$, there is $\delta\in \Gamma_K$ such that $v(f(x))<\gamma$ for all $x$ for which $v(x)<\delta$;  
\item[$\bullet$] for $a, b\in K$, $\displaystyle{\lim_{x\to a} f(x)=b}$ is the usual definition. 
\end{enumerate}

The following Lemma follows from the proof of \cite[Proposition 4.5]{delonCorps:12} with minor modifications. It ensures the existence of limits in definably complete $C$-minimal valued fields for which all definable unary $\Gamma$-functions are eventually linear. 

\begin{lemma}\label{lem:delon1} Let $(K,\cL)$ be a definably complete $C$-minimal valued field in which definable unary $\Gamma$-functions are eventually linear. Let $f\colon X\subseteq K\to K$ be a definable function. Then, for any $a\in \overline{X}$ (the topological closure of $X$ in $K\cup\{\infty\}$), the limit $\displaystyle{\lim_{x\to a} f(x)}$ exists in $K\cup\{\infty\}$. 
\end{lemma}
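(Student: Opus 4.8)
The plan is to reduce the existence of $\lim_{x \to a} f(x)$ to a statement purely about the value group, and there invoke the hypothesis that definable unary $\Gamma$-functions are eventually linear together with definable completeness. First I would reduce to the case of a single variable and to $X$ being (a subset of) a ball or a neighborhood of infinity: by Remark \ref{rmk:swiss}, $X$ is a finite union of Swiss cheeses, so by working near $a$ we may assume $X$ is a ball (if $a\in K$) or a neighborhood of infinity (if $a=\infty$), possibly with finitely many sub-balls removed. The point $a$ being in $\overline X$, the annuli $\{x : v(x-a) = \gamma\}$ meet $X$ for arbitrarily large $\gamma$ (resp. for $a = \infty$, arbitrarily small $v(x)$). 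By Theorem \ref{thm:hasmac} we may also assume $f$ is either locally constant or a local $C$-isomorphism or continuous; in the locally constant case the limit is immediate, so the interesting case is $f$ continuous and behaving like a $C$-isomorphism near $a$.

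\textbf{Key steps.} Next I would analyze the image. Consider the definable function $g(\gamma) := \min\{v(f(x) - c) : x \in X,\ v(x-a) = \gamma\}$ for a fixed auxiliary point $c$ (say $c = f(x_0)$ for some $x_0$ near $a$); this is well-defined by o-minimality of $\Gamma_K$ (Theorem \ref{lem:omin}(1)), and it is a definable unary $\Gamma$-function, hence eventually linear by hypothesis: $g(\gamma) = \lambda(\gamma) + \alpha$ for $\gamma$ large (Remark \ref{rmk:even-comp} handles the $\gamma \to \delta'$ direction if $a = \infty$). There are then two cases according to the sign of the slope $\lambda$. If $\lambda$ is (eventually) increasing — equivalently $g(\gamma) \to +\infty$ — then the images $f(\{x \in X : v(x-a) \geq \gamma\})$ shrink, and one shows they are contained in nested balls $B(c_\gamma, g(\gamma))$ whose radii tend to $+\infty$; definable completeness then gives a point $b$ in the intersection, and one checks $b = \lim_{x\to a} f(x)$ by construction of $g$. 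If instead $\lambda$ is eventually non-increasing (the value $v(f(x)-c)$ stays bounded or decreases), then $v(f(x))$ itself stays bounded above as $x \to a$, so one repeats the argument analyzing $\tilde g(\gamma) := \min\{v(f(x)) : v(x-a) = \gamma\}$: if $\tilde g \to -\infty$ the limit is $\infty$; otherwise $v(f(x))$ is eventually constant on annuli, and one descends to the residue field using strong minimality (Theorem \ref{lem:omin}(3)) to pin down the leading term, iterating (or using definable completeness on the nested balls $B(c_\gamma, \gamma')$ with $\gamma'$ slightly less than the eventual constant value) to produce the limit point $b$.

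\textbf{Main obstacle.} The delicate point is the bookkeeping that makes the nested-ball argument go through definably: one must produce, for each relevant radius $\gamma$, a definable choice of center $c_\gamma \in f(\{x : v(x-a) = \gamma\})$ so that the family $\{B(c_\gamma, g(\gamma))\}_\gamma$ is genuinely a \emph{definable} family of nested balls — nestedness requires checking $C(c_\gamma, c_{\gamma'}, \cdot)$-type compatibilities across radii, which is where $C$-minimality (Swiss cheese decomposition in families, Remark \ref{rmk:swiss}) and the eventual linearity of the associated $\Gamma$-functions must be combined carefully. This is essentially the content borrowed from \cite[Proposition 4.5]{delonCorps:12}; the ``minor modifications'' are precisely that here we do not have the pure ordered-group structure on $\Gamma_K$ but only the eventual-linearity hypothesis, so at each place where loc.\ cit.\ uses quantifier elimination for divisible ordered abelian groups one substitutes the eventual-linearity assumption (Definition \ref{def:eventually} and Remark \ref{rmk:even-comp}). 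I expect the rest — continuity of $f$ near $a$, reduction to Swiss cheeses, and the final verification $b = \lim f$ — to be routine once the definable nested family is in hand.
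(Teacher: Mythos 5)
The paper does not actually prove this lemma: it only asserts that it ``follows from the proof of \cite[Proposition 4.5]{delonCorps:12} with minor modifications'', so your plan is being measured against that citation rather than against an in-paper argument. Your overall skeleton --- reduce to definable unary $\Gamma$-functions, combine eventual linearity with definable completeness applied to a definable nested family of balls containing the tail images $f(\{x\in X: v(x-a)\geq\gamma\})$, and use strong minimality of $K/v$ in the residual case --- is the right one, and your identification of the ``definable nested family'' as the crux is fair.

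Three of the steps you do commit to, however, have genuine gaps. (1) ``In the locally constant case the limit is immediate'' is false: a function that is locally constant on a punctured neighbourhood of $a$ can still oscillate as $x\to a$ (its value may depend on $v(x-a)$, say), so this case needs the same argument as the rest. (2) ``If $\tilde g\to-\infty$ the limit is $\infty$'' does not follow: $\tilde g$ is an infimum over each annulus, so $\tilde g(\gamma)\to-\infty$ only says that \emph{some} values escape, i.e.\ that $\infty$ is a cluster value, not that it is the limit. (3) ``Iterating'' hides the real content: a digit-by-digit construction of the limit need not terminate, and your fallback family $B(c_\gamma,\gamma')$ with $\gamma'$ below the eventual constant radius has radii that do \emph{not} tend to $+\infty$, so definable completeness does not apply to it. The single missing idea behind all three points is the one that forces uniqueness of the cluster value: two disjoint definable subsets of $K$ cannot both accumulate at the same point $a$, because by Remark \ref{rmk:swiss} each is a finite union of Swiss cheeses, hence (non-singleton balls being clopen) each must contain a set $U\setminus F$ with $U$ an open ball around $a$ and $F$ finite, contradicting disjointness. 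With this in hand the argument closes: the smallest closed ball $D_\gamma$ containing the tail image gives a definable nested family whose radius function is non-decreasing and eventually linear, hence either tends to $+\infty$ (definable completeness then produces the limit point) or is eventually constant, in which case strong minimality of $K/v$ together with the disjoint-accumulation observation yields a contradiction; the value $\infty$ is separated from finite cluster values the same way. As written, your plan would not go through without these repairs.
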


We have all the elements to state and prove our $\RV$-factorization results. Recall that $p$ denotes the characteristic exponent of $K$ (that is, $p=\mathrm{char}(K)$ if $\mathrm{char}(K)>0$ and $p=1$ otherwise). We will directly show them in families. 

\begin{theorem}[$\RV$-factorization I]\label{thm:LJPatinfty} Let $(K,\cL)$ be a $C$-minimal valued field and $f\colon X\subseteq W\times K\rightarrow K^\times$ be a definable family of functions defined at a neighborhood of infinity. Let $h\colon Y\subseteq W\times \Gamma_K\to \Gamma_K$ be a $\Gamma$ factorization of $f$ at infinity, namely, for every $w\in W$, $f_w$ factorizes at infinity over $\Gamma$ through $h_w$. If $h$ is eventually $\Q$-linear, then there are a finite definable partition of $W$ into sets $W_1\cup\dots\cup W_\ell$, integers $n_1,\ldots,n_\ell,m_1,\ldots,m_\ell \in \mathbb Z$ and a definable function $c\colon W\to \RV^*$, such that for all $w\in W_i$, in a neighborhood of infinity, 
\[
\rv(f_w(x)) = \rv(x)^{n_i/p^{m_i}} c(w).
\] 
If in addition $(K,\cL)$ is definably complete and all definable unary $\Gamma$-functions are eventually linear, for all $w\in W_i$, the limit 
\[
a(w):=\lim_{x\to \infty} \frac{f_w(x)}{x^{n_i/p^{m_i}}}
\]
exists in $K$ and $c(w)=\rv(a(w))$. 
\end{theorem}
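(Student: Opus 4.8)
The plan is to work uniformly in $w$ and reduce everything to a single statement about the slope. Since the family $h$ is eventually $\Q$-linear, the eventual slope of $h_w$ (as $\gamma\to-\infty$, which is the relevant direction for a factorization at infinity) takes finitely many rational values $r_1,\dots,r_\ell$, and $W_i:=\{w\in W: h_w$ has eventual slope $r_i\}$ is definable; these partition $W$. Fix $i$, write $r=r_i=n/d$ in lowest terms, so that $v(f_w(x))=r\,v(x)+\alpha_w$ near infinity for a definable $\alpha_w$. To pass from $v$ to $\rv$, consider $x\mapsto f_w(x)^d x^{-n}$: it has constant valuation $d\alpha_w$ on a neighbourhood of infinity, so $x\mapsto\rv(f_w(x))^d\rv(x)^{-n}$ is a definable function from a neighbourhood of infinity into the fixed fibre $v^{-1}(d\alpha_w)$ of $v\colon\RV^{*}\to\Gamma_K$, hence essentially into $K/v$. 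I would then use that \emph{every definable function from a neighbourhood of infinity in $K$ into $K/v$ is eventually constant}: if its image were infinite one would obtain a definable finite-to-one map from a cofinite subset of the $\omega$-stable set $K/v$ (Theorem \ref{lem:omin}(3), Proposition \ref{prop:stably}) onto an infinite interval of the o-minimal, hence unstable, set $\Gamma_K$ (Theorem \ref{lem:omin}(1)), which is absurd; so the image is finite and a nonempty fibre is a neighbourhood of infinity. This produces a definable $\beta\colon W\to\RV^{*}$ with $\rv(f_w(x))^d=\rv(x)^n\beta(w)$ near infinity.

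The crux — and the step I expect to be the main obstacle — is to show that $d$ is a power of $p$. Write $d=p^m d'$ with $\gcd(d',p)=1$, and suppose $d'>1$ for contradiction. Raising the identity above to the $p^m$-th power, and then, using $\gcd(n,d')=1$ and a Bézout relation $an+bd'=1$, replacing $f_w$ by $G(x):=f_w(x)^{ap^m}x^{b}$, one gets $\rv(G(x))^{d'}=\rv(x)\eta$ for a suitable $\eta\in\RV^{*}$. Picking $t\in K^{\times}$ with $\rv(t)=\eta$ (enlarging parameters) and $t_0$ with $t_0^{d'}=t$, we find $G(x)^{d'}/(xt)\in 1+\mathcal{M}_K$, and since $\gcd(d',p)=1$ Hensel's lemma extracts a \emph{unique} $d'$-th root of it lying in $1+\mathcal{M}_K$, definably in $x$; dividing it out turns $G$ into a definable function $s$ on a neighbourhood of infinity with $s(x)^{d'}=xt$ identically there. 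Since $v(z^{d'})=d'\,v(z)$ is very negative whenever $v(z)$ is, the map $\zeta(z):=s(z^{d'})/(z t_0)$ is a definable function from a neighbourhood of infinity into the finite group $\mu_{d'}(K)$, hence (by $C$-minimality) eventually equal to a single value $\zeta_0$; but then $s(w_0)=\zeta_0 z t_0$ for \emph{every} $d'$-th root $z$ of any $w_0$ of sufficiently negative valuation, and such $w_0$ has $d'\ge 2$ distinct $d'$-th roots — contradiction. Hence $d=p^m$.

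It remains to conclude and to treat the limit clause. The map $y\mapsto y^{p^m}$ is a bijection of $\RV^{*}$ — it is the $p^m$-th power of the (bijective) Frobenius on $(K/v)^{\times}$ and multiplication by $p^m$ on the torsion-free divisible group $\Gamma_K$ — so $y\mapsto y^{1/p^m}$ is well defined and definable; setting $n_i:=n$, $m_i:=m$ and $c(w):=\beta(w)^{1/p^m}$ in $\rv(f_w(x))^{p^m}=\rv(x)^{n}\beta(w)$ gives $\rv(f_w(x))=\rv(x)^{n_i/p^{m_i}}c(w)$ near infinity, with $c$ definable. For the final clause, assume $(K,\cL)$ is definably complete with all definable unary $\Gamma$-functions eventually linear, and note that $x^{n_i/p^{m_i}}$ is a definable element of $K^{\times}$ (using Frobenius when $p>1$). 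Then $x\mapsto f_w(x)/x^{n_i/p^{m_i}}$ has constant valuation $\alpha_w$ near infinity, so by Lemma \ref{lem:delon1} its limit as $x\to\infty$ exists in $K\cup\{\infty\}$; constancy of the valuation excludes both $0$ and $\infty$, so the limit $a(w)\in K^{\times}$ exists and is definable. Finally, once $f_w(x)/x^{n_i/p^{m_i}}$ lies in the ball $B^{\circ}(a(w),v(a(w)))$ its $\rv$ equals $\rv(a(w))$, while it also equals $\rv(f_w(x))\rv(x)^{-n_i/p^{m_i}}=c(w)$ by the factorization just obtained; hence $c(w)=\rv(a(w))$.
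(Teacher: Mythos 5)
Your architecture matches the paper's up to the reduction to $\RV$: you partition $W$ by the finitely many eventual slopes, write $r=n/d$ in lowest terms, and use strong minimality of $K/v$ to stabilize the residue-level quantity $\rv(f_w(x))^{d}\rv(x)^{-n}$ near infinity. (Your justification of that stabilization is looser than it should be: the map you actually obtain is $c\mapsto\inf v(g^{-1}(c))$, which is neither finite-to-one nor onto an interval in general. The correct argument is that if no fibre of $g$ contained a neighbourhood of infinity, then every fibre would be bounded, so $c\mapsto\inf v(g^{-1}(c))$ would be a definable map from a cofinite subset of $K/v$ into $\Gamma_K$, hence of finite image by Theorem \ref{lem:omin}(3), making the fibres uniformly bounded and contradicting that they cover a neighbourhood of infinity; this is precisely the device the paper implements with its function $T^{w,b}$.) For the key step that the denominator is a power of $p$ you take a genuinely different route: the paper counts $\rv$-fibres, whereas you build a definable $d'$-th root of $xt$ near infinity via Hensel and then contradict its existence by counting $d'$-th roots in $K$. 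In equicharacteristic this is clean and arguably more transparent than the counting argument.

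The gap is in mixed characteristic. You write $d=p^{m}d'$ with $\gcd(d',p)=1$, where $p$ is the characteristic exponent of $K$; but when $\mathrm{char}(K)=0$ and $\mathrm{char}(K/v)=q>0$ one has $p=1$, so $d'=d$ and the condition $\gcd(d',p)=1$ is vacuous, while Hensel's lemma for $Y^{d'}-u$ at $Y=1$ needs $v(d')=0$, i.e.\ $q\nmid d'$. If $q\mid d'$ then $\mu_{d'}(K)\subseteq 1+\mathcal{M}_K$ up to its prime-to-$q$ part (the $q$-power roots of unity reduce to $1$), so there is no \emph{unique} $d'$-th root of $G(x)^{d'}/(xt)$ in $1+\mathcal{M}_K$ and your definable section $s$ cannot be extracted this way. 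The obstruction is not merely technical: for $d'=q^{k}$ the map $y\mapsto y^{q^{k}}$ is a bijection of $\RV^{*}$ (trivial $q$-torsion in $(K/v)^{\times}$, divisible torsion-free $\Gamma_K$), so the relation $\rv(G(x))^{q^{k}}=\rv(x)\eta$ is perfectly consistent at the $\RV$ level and cannot be refuted by any root-counting in $\RV^{*}$ or in $1+\mathcal{M}_K$ alone; some genuinely new input is needed to exclude denominators divisible by the residue characteristic. So your proof establishes the statement in equicharacteristic but leaves this case open. Everything after the crux --- extracting the unique $p^{m}$-th root in $\RV^{*}$ to define $c(w)$, and the limit clause via Lemma \ref{lem:delon1} together with the constancy of $v(f_w(x)x^{-n_i/p^{m_i}})$ --- is correct.
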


\begin{proof} By assumption, there are rational numbers $r_1,\ldots, r_\ell$ and definable functions $\alpha\colon W\to \Gamma_K$ and $\delta\colon W\to \Gamma_K\cup\{+\infty\}$ such that for each $w\in W$ there is $i\in \{1,\ldots,\ell\}$ satisfying that for all $\gamma<\delta(w)$
\[
v(f_w(x))=h_{w}(\gamma)= r_i \gamma + \alpha(w) \text{ for all $x$ such that $v(x)=\gamma$}.
\] 
This induces naturally a definable partition of $W = W_1\cup\dots\cup W_\ell$ depending on the slopes $r_i$. From now on we work over $W=W_i$ and let $r$ be $r_i$. Without loss of generality, suppose that $r\geqslant 0$. Let $r=s{t^{-1}}$, with $s$ and $t$ coprime positive integers or $s=0$ and $t=1$. Fix $w\in W$ and let $b\in K$ be such that $v(b)=\alpha(w)$. Thus, we have that 
\[
v(f_w(x)^{t})=v(x^{s}b^{t}) \text{ for all $x$ such that $v(x)<\delta(w)$}. 
\]
Consider the definable family of sets $A^{w,b}\subseteq \Gamma_K^{<\delta(w)}\times (K/v)^\times$ defined by having fibers of the form
\[
A_\gamma^{w,b}:=\{[f_w(x)^{t}x^{-s}b^{-t}]/v : \gamma<\delta(w), v(x)=\gamma\}.
\]
Let $T^{w,b}\colon  K/v\to \Gamma_K\cup\{-\infty\}$ be the definable partial function given by 
\[
c/v\mapsto \gamma_{c}:=\inf \{\gamma\in \Gamma_K^{<\delta(w)}\cup\{-\infty\}: (c/v)\in A_\gamma^{w,b}\}.
\]
By o-minimality, $T^{w,b}$ is well-defined and by Theorem \ref{lem:omin} (part (3)), $T^{w,b}$ has finite image. Moreover, the image of $T^{w,b}$ is coinitial in $\Gamma_K$, so it must attain the value $-\infty$. Therefore, again by o-minimality, there are $c\in K$ and $\delta'(w)\leqslant \delta(w)$ such that $(c/v)\in A_\gamma^{w,b}$ for all $\gamma<\delta'(w)$. By quantifying over $b$ and $c$, we can suppose that $\delta'\colon W\to \Gamma_K\cup\{+\infty\}$ is a definable function: we define $\delta'(w)$ to be be the minimal element $\gamma_1\in \Gamma_K\cup\{+\infty\}$ such that 
\begin{equation}\label{eq:defgamma}\tag{E1}
\gamma_1\geqslant \delta(w) \wedge (\exists b\in K) (\exists c\in \mathcal{O}_K) (v(b)=\alpha(w) \wedge (\forall \gamma<\gamma_1) ((c/v)\in A_\gamma^{w,b})). 
\end{equation}
For any $b,c\in K^\times$ satisfying \eqref{eq:defgamma}, we have that 
\begin{equation}\label{eq:rvorder}\tag{E2}
\rv(f_w(x))^t = \rv(x)^{s}\rv(b^tc),
\end{equation} 
for all $x\in K\setminus B(0,\delta'(w))$. Let us show that $t=p^{m}$ for some integer $m\in \NN$ (so we assume that $s\neq 0$). Since $s$ and $t$ are positive, by $C$-minimality we have that $f_w(K\setminus B(0,\delta'(w))$ contains a neighborhood of infinity. Let $t=t'p^{v_p(t)}$ and $s=s'p^{v_p(s)}$, where $v_p$ denotes the $p$-adic valuation when $p>1$, and $v_1$ denotes the constant 0 function. Now, for $x$ in a neighborhood of infinity consider the following sets:
\begin{itemize}
\item $A_x:=\{\rv(y): \rv(f_w(x))=\rv(f_w(y))\wedge v(x)=v(y)\} $,
\item $B_x:=\{\rv(y): \rv(f_w(x))^t=\rv(f_w(y))^t\wedge v(x)=v(y)\}$ and
\item $C_x:=\{\rv(y): \rv(x)^s=\rv(y)^s\}$. 
\end{itemize}
By $C$-minimality, there is $d\in \mathbb{N}^\ast\cup\{+\infty\}$ such that, for all $x$ in a neighborhood of infinity, $|A_x|=d$. Similarly, for all $x$ in a neighborhood of infinity, we have that $|B_x|=dt'$ and $|C_x|=s'$. By \eqref{eq:rvorder}, we have that $B_x=C_x$, and therefore $d\in \NN^*$ and $dt'=s'$. This shows that $t'$ divides $s'$. Since $s$ and $t$ are coprime, we must have that $t'=1$, which shows what we wanted.   

Define $c(w):=\rv(bc^{1/p^m})$, for any $b,c\in K$ satisfying \eqref{eq:defgamma}. We have thus that $\rv(f_w(x)) = \rv(x)^{s/p^m} c(w)$. 

\

To show the last statement, by Lemma \ref{lem:delon1} there is $a(w)\in K\cup\{\infty\}$ such that  
\[
a(w):=\lim_{x\to \infty} \frac{f_w(x)}{x^r}. 
\]
Since in a neighborhood of infinity all values of $f_w(x)x^{-r}$ lie in a single ball of radius $0$, we must have that $a(w)\in K$. Clearly, $\rv(a(w))=c(w)$. 
\end{proof}

In what follows $p$ will still denote the characteristic exponent of $K$ and $q\geqslant 0$ will denote the characteristic of the residue field $K/v$. 

\begin{theorem}[$\RV$-factorization II]\label{thm:almostjac} Let $(K,\cL)$ be a $C$-minimal valued field and~$f\colon X\subseteq W\times K\rightarrow K$ be a definable family of local $C$-isomorphisms. Suppose $h\colon Y\subset X\times \Gamma_K \rightarrow \Gamma_K$ is a local $\Gamma$-factorization of $f$ (namely, for all $w\in W$, $f_w$ locally factorizes over $\Gamma$ through $h_w$) which is eventually $\Q$-linear. Then, there is a finite definable partition of $X$ into sets $X_1\cup\dots\cup X_\ell$,  integers $n_1,\ldots,n_\ell$ and definable functions $\delta\colon X\to\Gamma_K$ and $c\colon X \to \RV^*$ such that for all $(w,x)\in X_i$, $B^\circ(x,\delta(w,x))\subseteq (X_i)_w$ and for all distinct $y,z\in B^\circ(x,\delta(w,x))$, 
\[
\rv((f_w(y)-f_w(z))=\rv((y-z))^{p^{n_i}} c(w,x).
\] 
In particular, when $p=1$ we may assume that $\ell=1$. If in addition $(K,\cL)$ is definably complete and all definable unary $\Gamma$-functions are eventually linear, the limit 
\[
a(w,x):=\lim_{y\to x} \frac{f_w(x)-f_w(y)}{(x-y)^{p^{n_i}}}
\]
exists in $K$ and $c(w,x)=\rv(a(w,x))$. 
\end{theorem}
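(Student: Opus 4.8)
\textbf{Proof plan for Theorem \ref{thm:almostjac}.}
The plan is to reduce the local statement to a relative version of the type (I) theorem by passing to a one-variable family of ``difference quotient'' functions. Concretely, first I would enlarge the parameter set: put $W':=X=\{(w,x)\}$ and, over a point $(w,x)\in X$, consider the function $g_{(w,x)}(u):=f_w(x+u)-f_w(x)$ defined on a punctured neighborhood of $0$ in $K$ (the radius of this neighborhood being controlled by the local $\Gamma$-factorization data from Theorem \ref{thm:uniform-local-gamma-fact}, so everything is uniformly definable). Since $f_w$ is a local $C$-isomorphism, after removing a definable set $F\subseteq X$ with finite fibers (Corollary \ref{cor:hasmac} and Theorem \ref{thm:local-gamma-fact}) the function $g_{(w,x)}$ is injective, sends $0$ to $0$, and factorizes over $\Gamma$ on the ball $B^\circ(x,\delta_0(w,x))$ through $h_{(w,x)}$; in particular $v(g_{(w,x)}(u))$ depends only on $v(u)$ and the map $u\mapsto v(g_{(w,x)}(u))$ is, after translating $0$ to $\infty$ by $u\mapsto 1/u$, exactly a $\Gamma$-factorization \emph{at infinity} of a definable family. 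So I would apply Theorem \ref{thm:LJPatinfty} to this transported family.

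The second step is bookkeeping with the hypothesis that $h$ is eventually $\Q$-linear. The local factorization $h_{(w,x)}$ restricted to $\Gamma_K^{>\gamma_0}$ is, by hypothesis, of the form $\gamma\mapsto r(w,x)\gamma+\alpha(w,x)$ for $\gamma$ large, with $r(w,x)$ ranging over a finite set $Z\subseteq\Q$; under the transformation $u\mapsto 1/u$ this becomes linear as $v(u)\to-\infty$, i.e. ``at infinity'' in the sense of Theorem \ref{thm:LJPatinfty}. Now Theorem \ref{thm:LJPatinfty} applied to the transported family yields a finite definable partition of $X$ into pieces $X_1,\dots,X_\ell$ (indexed by the slope), integers $n_i,m_i$, and a definable $c\colon X\to\RV^*$ with $\rv(g_{(w,x)}(u))=\rv(u)^{n_i/p^{m_i}}c(w,x)$ near $0$. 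Since $g_{(w,x)}$ is injective and $g_{(w,x)}(0)=0$, the same argument as in the proof of Theorem \ref{thm:LJPatinfty} (comparing the cardinalities of the $\RV$-fibers $A_u$, $B_u$, $C_u$, using $C$-minimality) forces the exponent to be of the form $p^{n_i}$ with $n_i\in\mathbb Z$ rather than a general positive rational; I would also note that when $p=1$ every such exponent is $1$, so a single piece suffices. Rewriting $g_{(w,x)}(u)=f_w(x+u)-f_w(x)$ and substituting $u=y-x$, $u'=z-x$, and then comparing $\rv(g_{(w,x)}(y-x)-g_{(w,x)}(z-x))$ with $\rv((y-x)-(z-x))$ — which is legitimate on a small enough ball because there $v(g(u)-g(u'))$ again depends only on $v(u-u')$ by the $\Gamma$-factorization — I would get the displayed identity $\rv(f_w(y)-f_w(z))=\rv(y-z)^{p^{n_i}}c(w,x)$ for all distinct $y,z$ in a definable sub-ball $B^\circ(x,\delta(w,x))$.

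For the limit statement I would argue exactly as at the end of the proof of Theorem \ref{thm:LJPatinfty}: under definable completeness and the global assumption that all definable unary $\Gamma$-functions are eventually linear, Lemma \ref{lem:delon1} guarantees that $\lim_{y\to x}(f_w(x)-f_w(y))(x-y)^{-p^{n_i}}=:a(w,x)$ exists in $K\cup\{\infty\}$; the $\RV$-identity just proved shows that all values of this quotient near $x$ lie in a single ball of radius $0$, so the limit lies in $K$ and satisfies $\rv(a(w,x))=c(w,x)$. The main obstacle I anticipate is making the transport ``shift $x$ to $0$, then invert'' genuinely \emph{uniform and definable} across the whole family $X$ — in particular controlling the radius $\delta(w,x)$ of validity so that it is a definable function of $(w,x)$ and so that the $\Gamma$-factorization $h$ really does become eventually $\Q$-linear after the coordinate change; this requires carefully threading the various definable radius functions from Theorems \ref{thm:uniform-local-gamma-fact} and \ref{thm:LJPatinfty} together, much as in the nested-radius bookkeeping in the proof of Theorem \ref{thm:local-gamma-fact}. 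A secondary subtlety is the characteristic-$p$ case, where one must check that the exponent obtained is a nonnegative power of $p$ matching the $\Gamma$-slope $r_i$, and handle $r_i$ possibly negative by passing to $f_w^{-1}$ or by the same coprimality argument applied to $|r_i|$.
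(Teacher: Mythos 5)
Your reduction-to-type-(I) strategy has two genuine gaps, and the second one is the technical heart of the theorem. First, applying Theorem \ref{thm:LJPatinfty} to the transported family $u\mapsto g_{(w,x)}(u)=f_w(x+u)-f_w(x)$ only controls $\rv(g_{(w,x)}(u))$ in terms of $\rv(u)$, i.e.\ it gives the identity $\rv(f_w(y)-f_w(x))=\rv(y-x)^{n/p^m}c(w,x)$ for differences \emph{anchored at the centre} $x$. The theorem requires the identity for \emph{all} distinct pairs $y,z$ in the ball, with a single constant $c(w,x)$. Your justification --- that ``$v(g(u)-g(u'))$ depends only on $v(u-u')$'' --- is exactly the $\Gamma$-level statement and does not lift to the $\RV$-level: knowing the valuation of $g(u)-g(u')$ is determined by $v(u-u')$ says nothing yet about its $\rv$-class. (Re-anchoring at $z$ produces a constant $c(w,z)$ that must then be shown equal to $c(w,x)$.) The paper's proof avoids this by running the strong-minimality argument on the family $A^{w,x,b}_\gamma$ of residues of $(f_w(u)-f_w(z))^t(u-z)^{-s}b^{-t}$ ranging over \emph{all} pairs with $v(u-z)=\gamma$, so that a single residue class $c/v$ is extracted uniformly over all pairs at once.

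Second, your claim that ``the same counting argument as in Theorem \ref{thm:LJPatinfty} forces the exponent to be of the form $p^{n_i}$'' is not correct: that argument only constrains the \emph{denominator} $t$ of $r=s/t$ (its prime-to-$p$ part must be $1$) and places no constraint on the numerator $s$ --- which is why the conclusion of type (I) is an exponent $n/p^m$ with $n$ an arbitrary integer, whereas type (II) asserts the exponent is exactly a power of $p$. Pinning this down is where the $C$-isomorphism hypothesis enters essentially: the paper normalizes to a $C$-isomorphism $g$ of $\mathcal{O}_K$ fixing $0$ with $(g(u)-g(z))^t\sim(u-z)^s$ (which already presupposes the all-pairs identity of the previous paragraph), descends to an injective map $\bar g$ of the residue field satisfying $(\bar g(u)-\bar g(z))^t=(u-z)^s$, and runs a root-counting argument there (Claim \ref{claim:1}) to conclude $s=t=1$ or $\{s,t\}=\{1,q^k\}$ with $q$ the residue characteristic; in mixed characteristic the cases $q^k$ with $k>0$ must then be excluded by the separate $C$-relation argument using Lemma \ref{lem:sim}. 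None of this appears in your sketch; your closing remark about ``handling $r_i$ possibly negative by passing to $f_w^{-1}$'' does not engage with it (in fact $r>0$ is automatic for a $C$-isomorphism). Note also that in mixed characteristic the $q$-power roots of unity collapse in $\RV^\ast$, so a fibre-counting argument over $\RV$ cannot by itself rule out exponents that are powers of $q$; this is precisely why the extra argument via Lemma \ref{lem:sim} is needed.
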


\begin{proof} 
Let $\theta\colon X\to \Gamma_K\cup\{-\infty\}$ be the definable function sending $(w,x)\in X$ to the minimal $\gamma$ such that $f_w|B^\circ(x,\gamma)$ is a $C$-isomorphism and $f_w| B^\circ(x,\gamma)$ factorizes over $\Gamma$ through $h_{w,x}$. Therefore, for every $(w,x)\in X$, all distinct $y,z\in B^\circ(x,\theta(w,x))$ and all $\gamma>\theta(w,x)$ we have that 
\begin{equation*}
h_{w,x}(\gamma)= v(f_w(z)-f_w(y)) \text{ if and only if $v(y-z)=\gamma$.}
\end{equation*} 
Since $h$ is eventually $\Q$-linear, there is a finite set $Z_h\subseteq \Q$ satisfying that for every $(w,x)\in X$, there is some $r\in Z_h$ such that for all $\gamma>\theta(w,x)$ (possibly replacing $\theta$ by a definable function taking higher values) 
\begin{equation}\label{eq:function}\tag{E3}
h_{w,x}(\gamma)=r\gamma+\alpha(w,x)
\end{equation}
where $\alpha\colon X\to \Gamma_K$ is a definable function. By Theorem \ref{thm:uniform-local-gamma-fact} we have moreover that for all $y\in B^\circ(x,\theta(w,x))$ and all $\gamma>\theta(w,x)$
\begin{equation}
h_{w,y}(\gamma)=r\gamma+\alpha(w,x).
\label{eq1:1}\tag{E4}
\end{equation}
Suppose that $Z_h=\{r_1,\ldots,r_\ell\}$. We partition $X$ into sets $X_1,\ldots, X_\ell$ where $X_i$ is given by 
\[
\{(w,x)\in X: (\forall \gamma>\theta(w,x)) (h_{w,x}(\gamma)=r_i\gamma+\alpha(w,x))\}. 
\]
From now on we suppose that $X$ is some $X_i$ and drop the indices (so $r$ will denote the slope $r_i$). 

Fix $(w,x)\in X$. Since $f_{w}|B^\circ(x, \theta(w,x))$ is a $C$-isomorphism we must have that $r>0$. Write $r=st^{-1}$ with $s$ and $t$ coprime positive integers and let $b\in K$ be such that $v(b)=\alpha(w,x)$. Unraveling definitions, we thus obtain that for all distinct $u,z\in B^\circ(x, \theta(w,x))$ 
\[
v((f_{w}(u)-f_{w}(z))^t)=v((u-z)^s \cdot b^t).
\]
Consider the definable family of sets $A^{w,x,b}\subseteq (\Gamma_K^{>\theta(w,x)})\times (K/v)^\times$ defined by having fibers
\[
A_\gamma^{w,x,b}:=\{[(f_{w}(u)-f_{w}(z))^t(u-z)^{-s}b^{-t}]/v : u, z\in B^\circ(x, \theta(w,x)), v(u-z)=\gamma\},
\]
and the definable partial function $T^{w,x,b}\colon  K/v\to \Gamma_K\cup\{+\infty\}$
\[
d\mapsto \gamma_d:=\sup \{\gamma\in \Gamma_K^{>\theta(w,x)}\cup\{+\infty\}: d\in A_\gamma^{w,x,b}\}.
\]
By o-minimality of $\Gamma_K$, $T^{w,x,b}$ is well-defined. By Theorem \ref{lem:omin} (part (3)),the image of $T^{w,x,b}$ has finite image. Therefore, since its image is cofinal in $\Gamma_K^{>\theta(w,x)}\cup\{+\infty\}$, $T^{w,x,b}$ must attain the value $+\infty$. Again by o-minimality, this implies that there are $c\in K$ and $\gamma_1\geqslant \theta(w,x)$ such that $(c/v)\in A_\gamma^{w,x,b}$ for all $\gamma>\gamma_1$. By quantifying over $b$ and $c$, we can make $\gamma_1$ definable over $(w,x)$: we let $\delta(w,x)$ be the minimal element $\gamma_1\in \Gamma_K\cup\{-\infty\}$ such that 
\begin{equation}\label{eq:defbc}\tag{E5}
\gamma_1\geqslant \theta(w,x) \wedge (\exists b\in K) (\exists c\in K) (v(b)=\alpha(w,x) \wedge (\forall \gamma>\gamma_1) ((c/v)\in A_\gamma^{w,x,b})). 
\end{equation}
Fix $b,c\in K$ satisfying \eqref{eq:defbc}.  We have that for all distinct $u,z\in B^\circ(x,\delta(w,x))$ 
\begin{equation}
(f_w(u)-f_w(z))^t\sim (u-z)^sb^tc.
\label{eq:sim2}\tag{E6}
\end{equation}

Recall that $q\geq 0$ denotes the characteristic of the residue field. 

\begin{claim}\label{claim:1}
Either $s=t=1$, or for some $k>0$ either $s=q^k$ and $t=1$, or $s=1$ and $t=q^k$.
\end{claim}

Pick $d\in K$ such that $v(d)>\delta(w,x)$, $e\in K$ a $t$-root of $d^{-s}b^{-t}c$ and consider the function defined on $\mathcal{O}_K$ given by 
\[g(z):=e(f_w(dz+x)-f_w(x)).\] 
Note that $g(0)=0$, $g$ is a $C$-isomorphism and moreover, for all $u,z\in \mathcal{O}_K$ we have that $(g(u)-g(z))^t\sim (u-z)^s$. Indeed, for $u,z\in \mathcal{O}_K$ we have that: 
\begin{align*}
(g(u)-g(z))^t 	& =  e^t(f_w(du+x)-f_w(dz+x))^t & \\
						&\sim e^t[(du+x)-(dz+x)]^s b^tc^{-1} & \text{ by  (\ref{eq:sim2}) }\\
						& = e^t d^s b^tc^{-1} (u-z)^s  = (u-z)^s. &
\end{align*}
This shows in particular that $v(g(u))=st^{-1}v(u)$ and that $g(u+\mathcal{M})\subseteq g(u)+\mathcal{M}$ for any $u\in \mathcal{O}_K$. Hence $g$ induces a residue map $\bar{g}\colon  K/v\to K/v$, which satisfies for all $u,z\in \mathcal{O}_K$
\[
(\bar{g}(u/v)-\bar{g}(z/v))^t=(u/v-z/v)^s.
\] 
We now work in $K/v$ (we will use $u,z\in K/v$ instead of $u/v$, $z/v$ for $u,z\in \mathcal{O}_K$). 

\

Since $(\bar{g}(u)-\bar{g}(z))^t=(u-z)^s$, we have both that $\bar{g}(u)^t=u^s$ and that $\bar{g}$ is injective. This implies, by Theorem \ref{lem:omin}, that there is a finite subset $F\subseteq K/v$ such that $\bar{g}(K/v)=(K/v)\setminus F$. If $q\neq 0$, we let $v_q$ denote the $q$-adic valuation on $\mathbb{Z}$. We split the argument in three cases: 

\

\textbf{Case 1} ($q\nmid s, q\nmid t$): The equality $\bar{g}(u)^t=u^s$ implies that for all $z\in K/v$, $u$ is an $s$-root of $z$ if and only if $\bar{g}(u)$ is a $t$-root of $z$. It is enough then to find some $z\neq 0$ such that all $t$-roots of $z$ are in $\bar{g}(K/v)$. For if this is true, $\bar{g}$ will be a bijection between all $s$-roots of $z$ and all $t$-roots of $z$ which implies $s=t$. From the assumption $gcd(s,t)=1$, we can conclude that $s=t=1$. To show such $z$ exists, consider the set $E=\{z\in K/v:\exists a\in F, a^t=z\}$. Given that $F$ is finite, so is $E$. Any $z\in (K/v)\setminus E$ satisfies the required property.  

\

\textbf{Case 2} ($q\neq 0, q\mid s, q\nmid t$): Let $k:=v_q(s)$ and $m\in \mathbb{N}$ such that $s=q^km$. This implies that $gcd(m,t)=1$ and our equality becomes $\bar{g}(u)^t=(u^{q^k})^m$. Thus for all $z\in K/v$, $u$ is an $m$-root of $z^{q^k}$ if and only if $\bar{g}(u)$ is a $t$-root of $z$. As in the previous case it is enough to find $z\neq 0$ such that all $t$-roots of $z$ are in $\bar{g}(K/v)$ to get a bijection from $m$-roots to $t$-roots, which implies $m=t=1$. The existence argument is exactly the same as in the previous case. 

\

\textbf{Case 3} ($q\neq 0, q\nmid s, q\mid t$): As in Case 2, let $k:=v_q(t)$ and set $m\in\mathbb{N}$ such that $t=q^km$. Thus, $gcd(m,s)=1$ and our equality becomes $(\bar{g}(u)^{q^k})^m=u^s$. Therefore for all $z\in K/v$, $u$ is an $s$-root of $z$ if and only if $\bar{g}(u)^{q^k}$ is an $m$-root of $z$. Let $\sigma(u)=u^{q^k}$. Here we need $z\neq 0$ such that all $s$-roots of $z$ are in $(\sigma\circ \bar{g})(K/v)$. Given that $\sigma$ is a bijection, a similar argument as in Case 1 shows the existence of such $z\in k$. Thus, $m=s=1$. This completes the claim. 

\

In all cases we will define the function $c\colon X\to \RV^*$ by $c(w,x):=rv(bc^{1/t})$ for any $b,c$ satisfying \eqref{eq:defbc}. Note that Claim \ref{claim:1} already implies the theorem for equicharacteristic valued fields. Moreover, if $p=1$ (recall $p$ is the characteristic exponent), then the only possible case is to have $s=t=1$, and therefore $Z_h=\{1\}$ (so assertion (2) of the theorem holds). For valued fields of mixed characteristic, it remains to show that the only possible case in the disjunction of Claim \ref{claim:1} is $s=t=1$. Since in Case 1 we already obtained that $s=t=1$, we may assume we are in either Case 2 or Case 3. We work again in $K$ and we assume that $1=p$ and $0<q$:

\

\textbf{(i)} Suppose we are in Case 2 above. Then there is some integer $k>0$ such that $s=q^k$, $t=1$ and therefore $g(u)\sim u^{q^k}$ for all $u\in \mathcal{O}_K$. Let $a\in \mathcal{O}_K\setminus\{0\}$ and $z\in K\setminus\{1\}$ such that $z^{q^k}=1$. This implies that $v(z)=0$ and thus that $az\in \mathcal{O}_K$ and $a\neq az$. Let $a_0\in \mathcal{O}_K\setminus\{0\}$ be such that 
\begin{enumerate}
\item $a_0^{q^k}\neq a^{q^k}$
\item $v(a-a_0)=v(az-a_0)=v(az-a)$.
\end{enumerate}   
Such an element $a_0$ always exists. Indeed, since $K/v$ is infinite, there are infinitely many elements $a_0$ satisfying (2), but at most $q^k$ roots of $a^{q^k}$. Now, on the one hand, the second condition on $a_0$ implies $\neg C(a_0,a,az)$. Since $g$ is a $C$-isomorphism, this implies $\neg C(g(a_0),g(a),g(az))$. On the other hand $C(a_0^{q^k},a^{q^k},a^{q^k})$ always holds, and since~$a^{q^k}=(az)^{q^k}$, we also have that~$C(a_0^{q^k}, a^{q^k}, (az)^{q^k})$. But $g(u)\sim u^{q^k}$ for all $u\in \mathcal{O}_K$, which contradicts Lemma \ref{lem:sim}. 

\

\textbf{(ii)} Suppose we are in Case 3 above. Then, for some integer $k>0$, $s=1$, $t=q^k$ and therefore $g(u)^{q^k}\sim u$ for all $u\in \mathcal{O}_K$. Since $g$ is a $C$-isomorphism and given that $g(0)=0$, $g(\mathcal{O}_K)$ is a closed ball centered at $0$. Given $u\in \mathcal{O}_K$ and $z=g(u)$, we have that 
\[
g(u)^{q^k}\sim u \Leftrightarrow z^{q^k}\sim g^{-1}(z)
\]
and get again the contradiction from case (i) for the $C$-isomorphism $g^{-1}$. This completes the proof of the first part. 

For the last assertion, suppose that $(K,\cL)$ is definably complete and all definable unary $\Gamma$-functions are eventually linear. Define $a(w,x)$ as 
\[
a(w,x):=\lim_{y\to x} \frac{f_w(x)-f_w(y)}{(x-y)^r}, 
\]
which exists by Lemma \ref{lem:delon1} (note that it cannot be the value $\infty$). Clearly, we have that $c(w,x)=\rv(a(w,x))$. 
\end{proof}

\section{The Jacobian Property}

Let $(K,\cL)$ be a definably complete $C$-minimal expansion of an algebraically closed valued field of characteristic 0 in which all definable unary $\Gamma$-functions are eventually $\Q$-linear. Note that by Lemma \ref{lem:delon1}, we have in this setting a well-defined notion of derivative for definable functions since limits exist. Let us recall the definition of the Jacobian property from \cite{cluckers-lipshitz:2011}.

\begin{definition}(Jacobian Property) \label{def:jacobian} Let $(K,v)$ be a valued field of characteristic $0$. Let $B\subseteq K$ be an open ball and $f \colon  B\to K$ be a function. We say that $f$ has the \emph{Jacobian Property} if 
\begin{enumerate}
\item[(i)] $f$ is injective and $f(B)$ is an open ball;
\item[(ii)] $f$ is differentiable and $f'$ is nonvanishing;
\item[(iii)] $rv(f'(x))$ is constant on $B$, say equal to $c$; 
\item[(iv)] for all $x, y\in B$, $rv(f(x)-f(y))=rv(x-y)c$.
\end{enumerate}
\end{definition}

\begin{theorem}\label{thm:LJP}
Let $(K,\cL)$ be a definably complete $C$-minimal valued field of characteristic $0$ in which definable unary $\Gamma$-functions are eventually $\Q$-linear. Let $f\colon X\subseteq W\times K\to K$ be a definable family of local $C$-isomorphisms. Then there is a set $F\subseteq W$ such that for all $w\in W$, $F_w$ is finite and $f_w|(X_w\setminus F_w)$ has locally the Jacobian property. 
\end{theorem}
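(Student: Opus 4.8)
The plan is to combine the three main ingredients that the excerpt has already assembled: the family version of the local $\Gamma$-factorization (Theorem \ref{thm:uniform-local-gamma-fact}), the family version of $\RV$-factorization II (Theorem \ref{thm:almostjac}), and the existence of limits/derivatives in definably complete $C$-minimal valued fields with eventually linear $\Gamma$-functions (Lemma \ref{lem:delon1}). Since we are in characteristic $0$, the characteristic exponent is $p=1$, so the integers $n_i$ in Theorem \ref{thm:almostjac} all vanish and we may take $\ell=1$; this is what makes the Jacobian property (with its exponent $1$ on $\rv(x-y)$) come out cleanly rather than a twisted version with a Frobenius power.

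First I would start with the given definable family of local $C$-isomorphisms $f\colon X\subseteq W\times K\to K$. Apply Theorem \ref{thm:uniform-local-gamma-fact} to obtain a definable family $h\colon Y\subseteq X\times\Gamma_K\to\Gamma_K$ and a definable $F_0\subseteq X$ with $(F_0)_w$ finite for each $w$, such that $f_w|(X_w\setminus (F_0)_w)$ locally factorizes over $\Gamma$ through $h_w$. Next, since all definable unary $\Gamma$-functions are eventually $\Q$-linear, the family $h$ (restricted to $X\setminus F_0$) is eventually $\Q$-linear in the sense of Definition \ref{def:eventually}(3): for each $(w,x)$ the slope lies in a fixed finite set $Z_h\subseteq\Q$, which in characteristic $0$ must reduce to $\{1\}$ by Claim \ref{claim:1} inside the proof of Theorem \ref{thm:almostjac}. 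Now apply Theorem \ref{thm:almostjac} to get (since $p=1$) a single piece, integer exponent $0$, a definable $\delta\colon X\to\Gamma_K$ and a definable $c\colon X\to\RV^\ast$ such that for all relevant $(w,x)$, $B^\circ(x,\delta(w,x))\subseteq X_w$ and for all distinct $y,z$ in that ball, $\rv(f_w(y)-f_w(z))=\rv(y-z)\,c(w,x)$. This already delivers clause (iv) of Definition \ref{def:jacobian} on each ball $B^\circ(x,\delta(w,x))$, and, because $f_w$ is a $C$-isomorphism on it, clause (i) as well.

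The remaining work is to connect clause (iv) to clauses (ii) and (iii), i.e. to show $f_w$ is differentiable on $B^\circ(x,\delta(w,x))$ with nonvanishing derivative whose $\rv$ equals $c(w,x)$. By Lemma \ref{lem:delon1} (definable completeness plus eventual linearity), for each point $y$ in the ball the limit $f_w'(y):=\lim_{z\to y}\frac{f_w(z)-f_w(y)}{z-y}$ exists in $K\cup\{\infty\}$; the factorization identity forces $v(f_w(z)-f_w(y))=v(z-y)+v(a)$ where $v(a)$ corresponds to $c(w,x)$, so the difference quotient stays in a single $\RV$-ball and the limit is a genuine element of $K^\times$ with $\rv(f_w'(y))=c(w,x)$. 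Doing this at every $y\in B^\circ(x,\delta(w,x))$ gives clauses (ii) and (iii) with constant $c=c(w,x)$. Finally, assemble the exceptional set: let $F$ be the union of $F_0$ with the (definable, fiberwise finite by part (4) of Theorem \ref{lem:omin}) set of points where the local factorization data degenerates, so that $F_w$ is finite for all $w$ and every $x\in X_w\setminus F_w$ has an open ball neighborhood on which $f_w$ has the Jacobian property.

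The main obstacle I expect is the bookkeeping needed to glue the various definable exceptional sets (from Theorem \ref{thm:uniform-local-gamma-fact}, from the partition and the function $\theta$ in Theorem \ref{thm:almostjac}, and from Lemma \ref{lem:delon1}) into one definable $F\subseteq W\times K$ whose fibers are uniformly finite — this is where part (4) of Theorem \ref{lem:omin} is essential. A secondary, more conceptual point is verifying carefully that in characteristic $0$ the exponent coming out of Theorem \ref{thm:almostjac} is exactly $1$ (not merely $p^{n_i}=1$ trivially, but that the derivative computed as a limit has $\rv$ precisely $c(w,x)$ and is nonzero), so that clause (iii) matches clause (iv) in Definition \ref{def:jacobian}; this is where definable completeness is genuinely used rather than just $C$-minimality.
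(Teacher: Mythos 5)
Your proposal is correct and follows essentially the same route as the paper: apply the family version of $\RV$-factorization II (Theorem \ref{thm:almostjac}), observe that in characteristic $0$ the characteristic exponent is $p=1$ so the exponent on $\rv(y-z)$ is $1$, and read off the Jacobian property on each ball $B^\circ(x,\delta(w,x))$. The only difference is that the paper leaves the verification of clauses (ii)--(iv) of Definition \ref{def:jacobian} to the reader, whereas you spell out the differentiability and $\rv(f_w')=c(w,x)$ step via Lemma \ref{lem:delon1}.
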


\begin{proof}
By Theorem \ref{thm:almostjac}, there is a set $F\subseteq X$ such that, for each $w\in W$
\begin{enumerate}
\item $F_w$ is finite and
\item there are definable functions $\delta\colon W\to\Gamma_K$ and $c\colon X\to K$ such that for all $w\in W$, all $x\in X_w\setminus F_w$ and all distinct $y,z\in B^\circ(x,\delta(w,x))$, 
\[
rv(f_w(y)-f_w(z))= rv(y-z)c(w,x).
\] 
\end{enumerate} 
We left to the reader to check that $f_w|B^\circ(x,\delta(w,x))$ satisfies the Jacobian property: since $f_w|X_w$ is a $C$-isomorphism, condition (i) of Definition \ref{def:jacobian} is already satisfied; conditions (ii)-(iv) easily follow from point (2) above. 
\end{proof}

\begin{corollary}\label{cor:Jac}
Let $(K,\cL)$ be a definably complete $C$-minimal valued field of characteristic $0$ in which definable unary $\Gamma$-functions are eventually $\Q$-linear. Let $f\colon X\subseteq W\times K\to K$ be a definable family of functions. Then $X$ decomposes into definable sets $X=F\cup E\cup J$ such that for each $w\in W$: 
\begin{enumerate}
\item $F_w$ is a finite set; 
\item $f_w|_{E_w}$ is a locally constant function;
\item $f_w|_{J_w}$ has locally the Jacobian property. 
\end{enumerate}
\end{corollary}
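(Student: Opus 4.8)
\textbf{Proof plan for Corollary \ref{cor:Jac}.}

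The plan is to reduce the statement to the family version of the Haskell--Macpherson decomposition (Corollary \ref{cor:hasmac}) followed by the family version of Theorem \ref{thm:LJP}. First I would apply Corollary \ref{cor:hasmac} to the definable family $f\colon X\subseteq W\times K\to K$: this produces a definable partition $X = F_0\cup E\cup I$ such that for every $w\in W$, the fiber $(F_0)_w$ is finite, $f_w|E_w$ is locally constant, and $f_w|I_w$ is a local $C$-isomorphism (after possibly moving finitely many points of each $I_w$ into $F_0$, using the remark following Definition \ref{def:Ciso} together with part (4) of Theorem \ref{lem:omin} to keep the exceptional sets uniformly finite). The set $E$ will be the locally constant part of the conclusion.

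Next I would apply Theorem \ref{thm:LJP} to the definable family of local $C$-isomorphisms $f|I\colon I\subseteq W\times K\to K$. This yields a definable set $F_1\subseteq I$ such that for all $w\in W$ the fiber $(F_1)_w$ is finite and $f_w|(I_w\setminus (F_1)_w)$ has locally the Jacobian property. I then set $J := I\setminus F_1$ and $F := F_0\cup F_1$. By construction $X = F\cup E\cup J$ is a definable partition, and for each $w\in W$ we have $F_w = (F_0)_w\cup (F_1)_w$ finite (a finite union of finite sets), $f_w|E_w$ locally constant, and $f_w|J_w$ with locally the Jacobian property. Note that the hypotheses of Theorem \ref{thm:LJP} — definable completeness, characteristic $0$, and all definable unary $\Gamma$-functions eventually $\Q$-linear — are exactly those assumed in the corollary, so the application is immediate.

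The only points requiring a little care are bookkeeping: ensuring that the various finite exceptional sets (the one from $C$-minimality guaranteeing $f(B)$ is an open ball, the one from Corollary \ref{cor:hasmac}, and the one from Theorem \ref{thm:LJP}) are all uniformly finite across $W$, which is precisely what part (4) of Theorem \ref{lem:omin} provides, and that their union remains definable, which is clear. There is no substantive obstacle here; the corollary is a straightforward concatenation of the two decomposition results, with the content entirely contained in Theorem \ref{thm:LJP} (hence ultimately in Theorem \ref{thm:almostjac}).
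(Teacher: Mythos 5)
Your proposal is correct and follows exactly the paper's own argument: apply the family version of the Haskell--Macpherson decomposition (Corollary \ref{cor:hasmac}) and then Theorem \ref{thm:LJP} to the local-$C$-isomorphism part, absorbing the resulting finite exceptional fibers into $F$. The extra bookkeeping you spell out (uniform finiteness via part (4) of Theorem \ref{lem:omin}, definability of the union) is implicit in the paper's two-line proof but adds nothing different in substance.
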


\begin{proof}
Let $f\colon X\subseteq W\times K\to K$ be a definable family of functions. By Corollary \ref{cor:hasmac}, there is a definable partition of $X$ into sets $F\cup C\cup I$ where for all $w\in W$, $F_w$ is finite, $f_w|C_w$ is locally constant and $f_w|I_w$ is a local $C$-isomorphism. The result follows by applying Theorem \ref{thm:LJP} to the family $f|I$. 
\end{proof}

\bibliographystyle{plain}

\end{document}